\DeclareFontFamily{U}{mathc}{}
\DeclareFontShape{U}{mathc}{m}{it}%
{<->s*[1.03] mathc10}{}
\DeclareMathAlphabet{\mathscr}{U}{mathc}{m}{it}
\theoremstyle{plain}
\newtheorem{Th}{Theorem}
\newtheorem{Lem}[Th]{Lemma}
\newtheorem{defn}[Th]{Definition}
\newtheorem{Remark}[Th]{Remark}
\newtheorem*{theoremA}{Theorem A}
\newtheorem*{theoremB}{Theorem B}
\newtheorem*{theoremC}{Theorem C}
\let\opn\operatorname
\renewcommand \a {\mathfrak{a}}
\newcommand \Z {\mathbb{Z}}
\newcommand{\jor}{\mathfrak{J}}
\def\U{\mathcal{U}}
\def\S{\mathcal{S}}
\def\B{\mathcal{B}}
\def\T{\mathcal{T}}
\newcommand \GL {\operatorname{GL}}
\newcommand \Aut {\operatorname{Aut}}
\newcommand \Hom {\operatorname{Hom}}
\newcommand \tr {\operatorname{tr}}
\begin{document}
\sloppy

\vspace*{2cm}

    {\Large The algebraic and geometric classification of Jordan superalgebras}\footnote{The first part of the work is supported by the Russian Science Foundation under grant 22-71-10001. The second part of this work is supported by FCT UIDB/MAT/00212/2020, UIDP/MAT/00212/2020, 2023.08952.CEECIND; and  by grant F-FA-2021-423, Ministry of Higher Education, Science and Innovations of the Republic of Uzbekistan.}

\medskip

\medskip

\medskip

\medskip
\begin{center}

 {\bf
Kobiljon Abdurasulov\footnote{CMA-UBI, University of  Beira Interior, Covilh\~{a}, Portugal;  \ 
Saint Petersburg  University, Russia; \ 
Institute of Mathematics Academy of Sciences of Uzbekistan, Tashkent, Uzbekistan;
abdurasulov0505@mail.ru} 
Roman Lubkov\footnote{Department of Mathematics and Computer Science, Saint Petersburg University,
7/9 Universitetskaya nab., 199034 St. Petersburg, Russia; r.lubkov@spbu.ru, romanlubkov@yandex.ru}
\&   
Azamat Saydaliyev \footnote{Institute of Mathematics Academy of
Sciences of Uzbekistan, Tashkent, Uzbekistan; National University of Uzbekistan, Tashkent, Uzbekistan; \ azamatsaydaliyev6@gmail.com}
}

\end{center}

\noindent {\bf Abstract.}
We give the algebraic and geometric classification of complex four-dimensional Jordan superalgebras.
In particular, we describe all irreducible components in the corresponding varieties.\smallskip

\bigskip
\noindent {\bf Keywords:} Jordan superalgebra, orbit closure, degeneration, rigid superalgebra

\bigskip

\noindent {\bf MSC2020}:  
17C70 (primary);
14D06,
14L30 (secondary)

\section*{Introduction}

\vspace{0.3cm}

The  algebraic classification (up to isomorphism) of algebras of small dimensions from a certain variety defined by a family of polynomial identities is a classic problem in the theory of non-associative algebras. 
Another interesting approach to studying algebras of a fixed dimension is to study them from a geometric point of view (that is, to study the degenerations and deformations of these algebras). The results in which the complete information about degenerations of a certain variety is obtained are generally referred to as the geometric classification of the algebras of these varieties. There are many results related to the algebraic and geometric classification of 
Jordan, Lie, Leibniz, Zinbiel, and other algebras 
(see,~\cite{adashev1,adashev2,degsulie,jord3, jor2, contr11, deggraa, BC99, CK07, ckls, ikp20, fkkv , degjor, GRH,GRH2, jdk, jdk2,km, km14, kkl21,k23,   kkp20,l24,S90 }). Degenerations are related to deformations and cohomology~\cite{ben,kkl20,MS}.

Superalgebras emerged in physics to provide a unified framework for the study of super symmetry of elementary particles. Jordan algebras, originating from quantum mechanics, gained importance due to their close connection to Lie theory. Finite-dimensional simple Jordan superalgebras over an algebraically closed field of characteristic zero were classified by Kac~\cite{Kac01} in 1977. One case remained unresolved, which was considered by Kantor~\cite{Kant} in 1990. Recently Racine and Zelmanov~\cite{MZ02} presented a classification of finite-dimensional simple Jordan superalgebras over arbitrary fields of characteristic distinct from 2, focusing on cases where the even part is semisimple. For the opposite case, where the even part is not semisimple, a classification was obtained by Mart\'inez and Zelmanov~\cite{MZ01} in 2002, completing the entire project.

In~\cite{Martin}, the authors focus on the classification of Jordan superalgebras of dimension up to three over an algebraically closed field of characteristic distinct from 2.
The mail goal of this paper is to determine the minimal dimension of exceptional Jordan superalgebras, raised in~\cite{Shest}. In~\cite{lowcom}, authors obtained all four dimensional indecomposable Jordan superalgebras. 

In the present paper, we  obtain an algebraic and geometric classification of four-dimensional Jordan superalgebras and find all irreducible components within that variety. Section 1 outlines the foundational concepts and establishes several key preliminary results. In Section 2, we proceed to classify all four-dimensional Jordan superalgebras.  Based on this classification, we determine the irreducible components within the variety in Section 3.

Our main results are summarized below. 

\begin{theoremA} The variety of complex $4$-dimensional Jordan superalgebras of type  $(1,3)$ has dimension $12$. 
It is defined by  $11$ rigid superalgebras and can be described as the closure of the union of $\mathrm{GL}_4(\mathbb{C})$-orbits of the superalgebras given in Theorem \ref{geo1}.    
\end{theoremA}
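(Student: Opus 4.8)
The plan is to follow the standard two-step pipeline for establishing geometric classification results of this kind. First, by the algebraic classification carried out in Section 2, we have a complete and finite list of the four-dimensional Jordan superalgebras of type $(1,3)$ (together with any one-parameter families appearing in that list). The variety under consideration sits inside the affine space of structure constants compatible with a fixed $\mathbb{Z}_2$-grading of type $(1,3)$, and the acting group is the subgroup of $\mathrm{GL}_4(\mathbb{C})$ preserving that grading; orbit closures are taken with respect to the Zariski topology. The first task is to compute, for each superalgebra on the list, the dimension of its orbit via $\dim \mathrm{Orb}(\B) = \dim \mathrm{GL}_4(\mathbb{C}) - \dim \Aut(\B)$ (adapted to the grading-preserving subgroup), which reduces to a derivation-algebra computation for each representative. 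The maximum of these orbit dimensions, taken over all superalgebras and all parametrized families (where one adds the dimension of the base of the family), is claimed to be $12$; this pins down the dimension of the variety once we know the variety is covered by the closures of these orbits.

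The second and main task is to identify the irreducible components. The strategy is to locate the candidate ``large'' superalgebras — those whose orbit closures are maximal with respect to the degeneration partial order — and to show (a) that none of them degenerates to another one on the list, so each genuinely contributes a component, and (b) that every superalgebra on the list lies in the closure of the orbit of at least one of these candidates, so that the union of the corresponding orbit closures is the whole variety. Step (a) is handled by exhibiting, for each pair of candidates, an invariant of degenerations (semicontinuous under degeneration: e.g. dimension of the annihilator, of the associated graded with respect to the lower central or derived series, of spaces of the form $\B^2$, $\mathrm{Ann}(\B)$, or a suitable $\mathfrak{gl}$-invariant subvariety condition) that obstructs the degeneration in the required direction. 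Step (b) requires, for each non-maximal superalgebra, an explicit degeneration: a curve $g(t) \in \mathrm{GL}_4(\mathbb{C}(t))$ such that the transported structure constants of the big superalgebra extend to $t=0$ and specialize to the small one; for the one-parameter families one also checks that the family degenerates generically from the appropriate rigid algebra or forms its own component.

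The count ``$11$ rigid superalgebras'' is then obtained by collecting the maximal superalgebras found in (a)–(b): a superalgebra is rigid precisely when its orbit is open in the variety, equivalently when it does not properly degenerate from any other superalgebra on the list, and the assertion is that exactly $11$ of the representatives in Theorem~\ref{geo1} have this property. Finally, irreducibility of each orbit closure is automatic (orbits are images of irreducible algebraic groups, hence irreducible, and closure preserves irreducibility), so the union of the $11$ orbit closures is exactly the decomposition into irreducible components, provided no one of them is contained in another — which is precisely what (a) guarantees.

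The hard part will be step (b): producing the explicit degeneration curves $g(t)$ that realize every necessary edge of the degeneration graph, and — dually, in step (a) — making sure that the list of proposed components is not redundant, i.e. that the chosen semicontinuous invariants are sharp enough to separate all $11$ candidates from one another and from the families. In practice this is a large but routine bookkeeping exercise once the algebraic classification and the orbit-dimension table are in hand; the genuine subtlety lies in handling the parametric families correctly (checking that a family is not absorbed into the closure of a single rigid orbit, and conversely that its generic member does degenerate as claimed).
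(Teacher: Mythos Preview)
Your outline matches the paper's approach exactly: compute orbit dimensions from derivation algebras, exhibit explicit parametrized bases $g(t)$ realizing each required primary degeneration, and rule out the remaining ones via semicontinuous invariants (the paper uses closed subsets $\mathcal{R}$ defined by linear relations among structure constants together with Lemma~\ref{lema:inv}). So as a strategy there is nothing to correct.

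Two remarks, however. First, a small misconception: in type $(1,3)$ the algebraic classification yields exactly $19$ isolated isomorphism classes and \emph{no} one-parameter families, so your final paragraph about ``the genuine subtlety lies in handling the parametric families correctly'' is vacuous here --- that issue only arises in type $(2,2)$ via $\mathcal{J}_{16}^t$. Second, what you have written is a plan, not a proof: the actual content of Theorem~A consists of the concrete degeneration curves (e.g. ${\bf J}_{5}\to{\bf J}_{3}$ via $(f_1-f_3,\,f_2,\,tf_3-te,\,e)$) and the concrete obstruction sets $\mathcal{R}$ (e.g. ${\bf J}_{12}\not\to{\bf J}_{7}$ because $\mathcal{R}=\{JJ\subset\langle e,f_2,f_3\rangle,\ c_{44}^4=2c_{24}^2,\ c_{44}^4=c_{34}^3\}$ is closed, $G$-stable on upper-triangulars, contains ${\bf J}_{12}$, and excludes ${\bf J}_{7}$). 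Without those explicit witnesses --- or at minimum a worked sample and a statement that the remaining cases are analogous --- the argument is not complete.
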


\begin{theoremB} The variety of complex $4$-dimensional Jordan superalgebras of type  $(2,2)$ has dimension $13$. 
It is defined by  $24$ rigid superalgebras and one one-parametric families of superalgebras and can be described as the closure of the union of $\mathrm{GL}_4(\mathbb{C})$-orbits of the superalgebras given in Theorem \ref{geo2}.    
\end{theoremB}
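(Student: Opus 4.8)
\medskip
\noindent\textbf{Plan of proof of Theorem B.}
The plan is to derive the geometric statement from the algebraic classification of the four-dimensional Jordan superalgebras of type $(2,2)$ obtained in Section~2, by a systematic study of degenerations inside the variety of structure constants. First I would fix the ambient setup as in Section~1: a point is a multiplication $\mu\colon\mathbb C^4\times\mathbb C^4\to\mathbb C^4$ on the fixed space $\mathbb C^4$, the variety $\mathcal{JS}^{(2,2)}$ is the Zariski closure of the set of those $\mu$ realizing a type-$(2,2)$ Jordan superalgebra, and $\mathrm{GL}_4(\mathbb C)$ acts by transport of structure, so that two points lie in one orbit precisely when the corresponding superalgebras are isomorphic. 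In this way the finitely many isomorphism classes listed in Section~2, together with the single one-parametric family, stratify the constructible set of type-$(2,2)$ Jordan superalgebra structures, which is dense in $\mathcal{JS}^{(2,2)}$, into $\mathrm{GL}_4(\mathbb C)$-orbits and one sheet of orbits.

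Next, for every superalgebra $A$ in that list I would compute its derivation algebra and hence the orbit dimension $\dim\mathcal O(A)=\dim\mathrm{GL}_4(\mathbb C)-\dim\operatorname{Der}(A)$, using that the stabilizer of $\mu_A$ is $\operatorname{Aut}(A)$ with Lie algebra $\operatorname{Der}(A)$. This identifies the candidates for rigidity --- the algebras whose orbit is maximal among the orbits it can possibly reach --- and shows that the members of the one-parametric family all have orbits of the same dimension, so the closure of the family has dimension one larger. The prospective irreducible components are then the $24$ orbit closures $\overline{\mathcal O(A)}$ with $A$ rigid, together with the closure of the family; to finish, it remains to prove (i) that these closures jointly exhaust $\mathcal{JS}^{(2,2)}$, and (ii) that none of them is contained in another.

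For (i) I would exhibit, for every non-rigid superalgebra $B$ occurring in the classification, an explicit degeneration $A\to B$ from one of the candidates, that is, a curve $g_t\in\mathrm{GL}_4(\mathbb C)$ with $\lim_{t\to 0}g_t\cdot\mu_A=\mu_B$; assembling these ``primary'' degenerations into a Hasse diagram and invoking transitivity of orbit-closure inclusion reduces the covering statement to bookkeeping. Part (ii) is the core of the proof: I would separate the candidates by semicontinuous invariants that are monotone under degeneration --- $\dim\operatorname{Der}$, $\dim\mathcal O$, $\dim A^2$ and the dimensions of its even and odd parts, $\dim\operatorname{Ann}(A)$, the nilpotency index --- and, whenever these do not suffice, by the standard non-degeneration criteria based on $\mathrm{GL}_4(\mathbb C)$-stable closed conditions (containing a prescribed subalgebra or ideal, or satisfying an extra identity) in the spirit of Grunewald--O'Halloran and Burde--Steinhoff. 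The family needs separate attention: one must check that its generic member is not a degeneration of any rigid candidate and that distinct generic members have incomparable orbit closures, so that the family genuinely contributes one additional component of dimension $13$ rather than being absorbed into a larger orbit closure.

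The main obstacle is the combination of exhaustiveness in (i) with the non-degeneration certificates in (ii): there is no shortcut around enumerating the full poset of orbit closures among the Section~2 algebras and certifying every missing edge, and the one-parametric family forces one to pair the orbit-dimension count with a proof that its union of orbits is not dominated by any single larger orbit. Once the Hasse diagram is complete, its maximal elements are exactly the irreducible components of $\mathcal{JS}^{(2,2)}$ --- the $24$ rigid superalgebras and the closure of the one-parametric family, as recorded in Theorem~\ref{geo2} --- and the maximum of their dimensions is $13$, which is therefore the dimension of the variety.
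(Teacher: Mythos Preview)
Your plan is correct and follows essentially the same approach as the paper: compute orbit dimensions via derivations, construct explicit one-parameter degenerations to cover all non-maximal orbits, and certify the primary non-degenerations by exhibiting $G$-stable closed conditions $\mathcal R$ (together with the invariant ``the even part must degenerate as a Jordan algebra,'' which is the paper's workhorse via Lemma~\ref{lema:inv}(2) and Theorem~\ref{2d}). The paper's proof of Theorem~\ref{geo2} is exactly this program carried out with the specific parametric bases and $\mathcal R$-sets tabulated there.
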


\begin{theoremC}
The variety of complex $4$-dimensional Jordan superalgebras of type  $(3,1)$ has dimension $15$. 
It is defined by  $21$ rigid superalgebras and can be described as the closure of the union of $\mathrm{GL}_4(\mathbb{C})$-orbits of the superalgebras given in Theorem \ref{geo3}.    
\end{theoremC}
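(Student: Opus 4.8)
This is a geometric classification built on the algebraic classification of Section~2, so the argument has the usual shape. Write $\mathbb{JS}^{(3,1)}$ for the variety of structure constants of $4$-dimensional Jordan superalgebras of type $(3,1)$: a Zariski-closed subset of $\Hom(\mathbb{C}^{4}\otimes\mathbb{C}^{4},\mathbb{C}^{4})$ cut out by the grading constraints together with the supercommutativity and super Jordan identities, on which $\mathrm{GL}_4(\mathbb{C})$ acts with orbits the isomorphism classes. Its closure is a finite union of irreducible components, each the closure of a single $\mathrm{GL}_4(\mathbb{C})$-orbit. The first step is to run through the finite list of superalgebras of type $(3,1)$ furnished by Theorem~\ref{geo3} and to compute, for each $A$ on it, the dimension of its derivation algebra; orbit--stabiliser then gives $\dim\mathrm{Orb}(A)=16-\dim\operatorname{Der}(A)$, hence the dimension of $\overline{\mathrm{Orb}(A)}$, and already singles out the candidates for the top-dimensional component(s).

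The heart of the proof is the determination of the degeneration partial order on that list. For each ordered pair $(A,B)$ of non-isomorphic superalgebras one argues in one of two ways. To prove $A\to B$ one exhibits a parametrised change of basis $g_{t}\in\mathrm{GL}_4\bigl(\mathbb{C}[t,t^{-1}]\bigr)$ such that the structure constants $g_{t}\ast\mu_{A}$ converge, coordinatewise as $t\to 0$, to those of $B$ (and, for any parametric family that occurs in the classification, $g_t$ is allowed to involve the parameter as well). To prove $A\not\to B$ one invokes the standard list of degeneration-closed conditions: strict decrease of $\dim\mathrm{Orb}$ and strict increase of $\dim\operatorname{Der}$ along a proper degeneration; upper/lower semicontinuity of the dimensions of $A^{2}$, of its even and odd components, of $\operatorname{Ann}(A)$, and of the further derived subspaces canonically attached to $A$; and preservation of properties such as (super)associativity, (super)nilpotency, having a unit, or containing an ideal or subalgebra with a prescribed multiplication table. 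Collating all of these yields the Hasse diagram of $\mathbb{JS}^{(3,1)}$.

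From the Hasse diagram one reads off the rigid superalgebras --- the sources of the degeneration order, i.e.\ those whose orbit lies in the closure of no other orbit --- and the assertion is that in type $(3,1)$ there are exactly $21$ of them and that none of their orbit closures is contained in another (checked once more with the invariants above, so the closures are pairwise incomparable). It then remains to see that these $21$ closures exhaust the variety, which amounts to showing that every superalgebra on the list --- in particular every member of any parametric family --- degenerates from one of the $21$ rigid ones; then $\overline{\mathbb{JS}^{(3,1)}}=\bigcup_{i=1}^{21}\overline{\mathrm{Orb}(A_{i})}$ with the union irredundant. The dimension of the variety is the largest dimension of a component, i.e.\ the maximum of the $21$ orbit dimensions computed in the first step, and this maximum turns out to be $15$; putting the three parts together gives Theorem~C.

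The principal obstacle is the middle step: the number of pairs to be decided grows quadratically with the (substantial) length of the list, and the genuinely delicate cases are exactly those in which two superalgebras share the same derivation dimension, or in which the naive diagonal family $g_{t}$ fails to converge. For such a pair one must either isolate a sufficiently fine invariant separating the two orbit closures, or discover the correct --- and typically non-diagonal --- degenerating family; arranging the case analysis so that every required degeneration and every required non-degeneration is accounted for, with no component overlooked and none spurious, is where essentially all of the work lies. A secondary point needing care is any one-parameter family that appears among the type-$(3,1)$ superalgebras: one must verify that its generic member is non-rigid and that the whole family falls into the closure of one of the $21$ orbits rather than forming a component of its own, which is settled by comparing the dimension of the closure of the family with the generic orbit dimension inside it, exactly as is done for type $(2,2)$ in Theorem~B.
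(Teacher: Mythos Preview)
Your strategy is correct and matches the paper's approach: compute orbit dimensions from $\dim\mathfrak{Der}$, exhibit explicit parametrised changes of basis for primary degenerations, and rule out the remaining pairs by semicontinuous invariants or by a closed subset $\mathcal{R}$ stable under upper-triangular matrices. Two small points: first, in type $(3,1)$ the algebraic classification yields only finitely many isolated isomorphism classes and \emph{no} parametric family, so your ``secondary point'' about a one-parameter family contributing a component is vacuous here (it is relevant only for type $(2,2)$). Second, the paper exploits a shortcut you do not mention: for type $(3,1)$ the even part $J_0$ is a $3$-dimensional Jordan algebra, and by Lemma~\ref{lema:inv}(2) any degeneration $J\to J'$ forces $J_0\to J'_0$; since the degeneration graph of $3$-dimensional Jordan algebras is already known (Theorem~\ref{3d}), this disposes of the bulk of the non-degenerations without further computation, leaving only a short table of pairs that require an explicit $\mathcal{R}$.
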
 

\section{Preliminaries}

\subsection{Jordan superalgebras}
\begin{defn}
A commutative algebra is called a {\it  Jordan  algebra}  if it satisfies the identity
$$(x^2y)x=x^2(yx).$$
\end{defn}

\begin{defn}
     A superalgebra $\mathcal{A}$ is an algebra with a $\mathbb{Z}_2$-grading. So $\mathcal{A}=\mathcal{A}_0\oplus\mathcal{A}_1$ is a direct sum of two vector spaces and
        $$\mathcal{A}_i \mathcal{A}_j \subseteq \mathcal{A}_{i+j}, \ \ \text{where} \ \ i,j\in \mathbb{Z}_2.$$
\end{defn}

Let $G$ be the Grassmann algebra over $\mathbb{F}$ given by the generators $1, \xi_1, \ldots , \xi_n, \ldots$  and the defining relations $\xi_i^2=0$ and $\xi_i\xi_j=-\xi_j\xi_i.$
The elements $1, \xi_{i_1} \xi_{i_2} \ldots  \xi_{i_k},\ i_1<i_2<\ldots <i_k,$ form a basis of the algebra $G$ over $\mathbb{F}$. Denote by $G_0$ and $G_1$ the subspaces spanned by the products
of even and odd lengths, respectively; then $G$ can be represented as the direct sum of these subspaces,
$G = G_0 \oplus G_1.$
Here the relations $G_iG_j \subseteq G_{i+j (mod \ 2)}$, $i,j = 0, 1$, hold.
In other words, $G$ is a $\mathbb{Z}_2$-graded algebra (or a superalgebra) over $\mathbb{F}$.
Suppose now that $A = A_0 \oplus A_1$ is an arbitrary superalgebra over $\mathbb{F}$. Consider the tensor product $G \otimes A$ of $\mathbb{F}$-algebras.
The subalgebra
$$G(A) = G_0 \otimes A_0 + G_1  \otimes A_1$$
of $G\otimes A$ is referred to as the Grassmann envelope of the superalgebra $A.$
Let $\Omega$ be a variety of algebras over $\mathbb{F}.$
A superalgebra $A = A_0 \oplus A_1$ is referred to as an
$\Omega$-superalgebra if its Grassmann envelope $G(A)$ is an algebra in $\Omega.$
In particular, $A = A_0  \oplus  A_1$ is
referred to as a Jordan superalgebra if its Grassmann envelope $G(A)$ is a Jordan algebra.
\begin{defn}
    A Jordan superalgebra is a superalgebra $\mathfrak{J}=\mathfrak{J}_0 + \mathfrak{J}_1$ satisfying the graded identities:
    $${xy=(-1)^{|x||y|}yx,}$$
    
$${((xy)z)t+(-1)^{|y||z|+|y||t|+|z||t|}((xt)z)y+(-1)^{|x||y|+|x||z|+|x||t|+|z||t|}((yt)z)x=}$$
$${(xy)(zt)+(-1)^{|t|(|y|+|z|)}(xt)(yz)+(-1)^{|y||z|}(xz)(yt),}$$
where $|x|=i$ for $x \in \mathfrak{J}_i$.
\end{defn}

For convenience, we use the following notation in the next sections.
\begin{center}
    $J(x,y,z,t)=((xy)z)t+(-1)^{|y||z|+|y||t|+|z||t|}((xt)z)y+(-1)^{|x||y|+|x||z|+|x||t|+|z||t|}((yt)z)x-(xy)(zt)-(-1)^{|t|(|y|+|z|)}(xt)(yz)-(-1)^{|y||z|}(xz)(yt)$.
\end{center}

\begin{defn}
For arbitrary elements $x,y \in \mathfrak{J}_0 \cup \mathfrak{J}_1$ of a Jordan superalgebra, the operator $D\colon \mathfrak{J} \to \mathfrak{J}$ satisfying
$$D(xy)=D(x)y+(-1)^{|D||x|}xD(y)$$
is a derivation of $\mathfrak{J}$, where $|D|=0$ if $D$ preserves the gradation and $|D|=1$ otherwise.
\end{defn}

In 2013, M.E.Martin~\cite{Martin2} described all Jordan algebras up to dimension four over an algebraically closed field. Based on that paper, we present the list of indecomposable Jordan algebras and superalgebras of dimension less than or equal to 3 in the following tables, as they will be used in the sequel.

\renewcommand{\arraystretch}{1.2}
\begin{longtable}{lllllllll}
\caption{Indecomposable Jordan algebras}
\label{tb:joral}\\
$\jor$ & & \multicolumn{6}{c}{Multiplication table} & dim \\
\hline
$\mathcal{U}_1$ & $:$ & ${e^{2}=e}$ & & & & & & 1 \\ 
$\mathcal{U}_2$ & $:$ & ${e^{2}=0}$ & & & & & & 1 \\
$\mathcal{B}_1$ & $:$ & $e_{1}^{2}=e_1$ &  $e_1 e_2 =e_2$ &  $e_{2}^{2}=0$ & & & & 2 \\
$\mathcal{B}_2$ & $:$ & $e_{1}^{2}=e_1$ &  $e_1 e_2 = \frac{1}{2} e_2$ &  $e_{2}^{2}=0$ & & & & 2 \\
$\mathcal{B}_3$ & $:$ & $e_{1}^{2}=e_2$ &  $e_1 e_2 =0$ &  $e_{2}^{2}=0$ & & & & 2 \\
$\mathcal{T}_1$ & $:$ & $e_{1}^{2}=e_1$ &  $e_{2}^{2}=e_3$ &  $e_{3}^{2}=0$ &  $ e_1 e_2 =e_2$ &  $e_1 e_3 =e_3$ &  $e_2 e_3 =0$ & 3 \\
$\mathcal{T}_2$ & $:$ & $e_{1}^{2}=e_1$ &  $e_{2}^{2}=0$ &  $e_{3}^{2}=0$ &  $e_1 e_2 =e_2$ &  $e_1 e_3 =e_3$ &  $e_2 e_3 =0$ & 3 \\
$\mathcal{T}_3$ & $:$ & $e_{1}^{2}=e_2$ &  $e_{2}^{2}=0$ &  $e_{3}^{2}=0$ &  $e_1 e_2 =e_3$ &  $e_1 e_3 =0$ &  $e_2 e_3 =0$ & 3 \\
$\mathcal{T}_4$ & $:$ & $e_{1}^{2}=e_2$ &  $e_{2}^{2}=0$ &  $e_{3}^{2}=0$ &  $e_1 e_2 =0$ &  $e_1 e_3 =e_2$ &  $e_2 e_3 =0$ & 3 \\
$\mathcal{T}_5$ & $:$ & $e_{1}^{2}=e_1$ &  $e_{2}^{2}=e_2$ &  $e_{3}^{2}=e_1 +e_2$ &  $e_1 e_2 =0$ &  $e_1 e_3 =\frac{1}{2}e_3$ &  $e_2 e_3 =\frac{1}{2} e_3$ & 3 \\
$\mathcal{T}_6$ & $:$ & $e_{1}^{2}=e_1$ &  $e_{2}^{2}=0$ &  $e_{3}^{2}=0$ &  $e_1 e_2 =\frac{1}{2} e_2$ &  $e_1 e_3 =  e_3$ &  $e_2 e_3 =0$ & 3 \\
$\mathcal{T}_7$ & $:$ & $e_{1}^{2}=e_1$ &  $e_{2}^{2}=0$ &  $e_{3}^{2}=0$ &  $e_1 e_2 =\frac{1}{2} e_2$ &  $e_1 e_3 =\frac{1}{2} e_3$ &  $e_2 e_3 =0$ & 3 \\
$\mathcal{T}_8$ & $:$ & $e_{1}^{2}=e_1$ &  $e_{2}^{2}=e_3$ &  $e_{3}^{2}=0$ &  $e_1 e_2 =\frac{1}{2} e_2$ &  $e_1 e_3 =0$ &  $e_2 e_3 =0$ & 3 \\
$\mathcal{T}_9$ & $:$ & $e_{1}^{2}=e_1$ &  $e_{2}^{2}=e_3$ &  $e_{3}^{2}=0$ &  $e_1 e_2 =\frac{1}{2} e_2$ &  $e_1 e_3 =e_3$ &  $e_2 e_3 =0$ & 3 \\
$\mathcal{T}_{10}$ & $:$ & $e_{1}^{2}=e_1$ &  $e_{2}^{2}=e_2$ &  $e_{3}^{2}=0$ &  $e_1 e_2 =0$ &  $e_1 e_3 =\frac{1}{2}e_3$ &  $e_2 e_3 =\frac{1}{2} e_3$ & 3 \\
\end{longtable}

Below we list indecomposable superalgebras denoted by $\S_j^i$ where the exponent $i$ represents its dimension.

\renewcommand{\arraystretch}{1.2}
\begin{longtable}{lllll||llllll}
\caption{Indecomposable superalgebras}
\label{tb:supal}\\
$\jor$ & & \multicolumn{3}{c}{Multiplication table} & $\jor$ & & \multicolumn{4}{c}{Multiplication table} \\
\hline
$\S_1^1$ & $:$ & $f^2=0$ & & & $\S_6^3$ & $:$ & $e^2=e$ & $e f_1=f_1$ & $e f_2=f_2$ \\
$\S_1^2$ & $:$ & $e^2=e$ & $e f=\frac{1}{2} f$  &                           &   $\S_7^3$ & $:$ & $e^2=e$ & $e f_1=\frac{1}{2} f_1$ & $e f_2=\frac{1}{2}f_2$ & $f_1 f_2=e$  \\ 
$\S_2^2$ & $:$ & $e^2=e$ & $e f =f$ &                                         &   $\S_8^3$ & $:$ & $e^2=e$ & $ef_1=f_1$ & $e f_2=f_2$ & $f_1 f_2=e$  \\ 
$\S_1^3$ & $:$ & $ef_1=f_2$ & $f_1 f_2=e$  &                                    &   $\S_9^3$ & $:$ &  $e_1^2=e_1$ & $e_1e_2=e_2$ & $e_1 f=\frac{1}{2} f$ \\
$\S_2^3$ & $:$ & $f_1f_2=e$   & &                                              &   $\S_{10}^3$ & $:$ &  $e_1^2=e_1$ & $e_1 e_2=e_2$ & $e_1f=\frac{1}{2}f$ \\ 
$\S_3^3$ & $:$ & $e f_1=f_2$  & &                                              &   $\S_{11}^3$ & $:$ & $e_1^2=e_1$ & $e_1e_2=\frac{1}{2}e_2$ & $e_1 f=\frac{1}{2}f $  \\ 
$\S_4^3$ & $:$ & $e^2=e$ & $e f_1=f_1$ & $e f_2=\frac{1}{2} f_2$              &   $\S_{12}^3$ & $:$ & $e_1^2=e_1$ & $e_1 e_2=\frac{1}{2} e_2$ & $e_1f=f$  \\ 
$\S_5^3$ & $:$ & $e^2=e$ & $e f_1=\frac{1}{2}f_1$ & $e f_2=\frac{1}{2}f_2$    &   $\S_{13}^3$ & $:$ & $e_1^2=e_1$ & $e_2^2=e_2$ & $e_1f=\frac{1}{2}f$ & $e_2 f=\frac{1}{2} f$  \\
\end{longtable}

\subsection{Degenerations}
Given an $(m,n)$-dimensional vector superspace $V=V_0\oplus V_1$, the set
$$\Hom(V \otimes V,V)=(\Hom(V \otimes V,V))_0\oplus (\Hom(V \otimes V,V))_1$$ is a vector superspace of dimension $m^3+3mn^2$. This space has a structure of the affine variety $\mathbb{C}^{m^3+3mn^2}.$ If we fix a basis $\{e_1,\dots,e_m,f_1,\dots,f_n\}$ of $V$, then any $\mu\in \Hom(V \otimes V,V)$ is determined by $m^3+3mn^2$ structure constants
$\alpha_{i,j}^k,\beta_{i,j}^q,\gamma_{i,j}^q, \delta_{p,q}^k \in\mathbb{C}$ such that
$$\mu(e_i\otimes e_j)=\sum\limits_{k=1}^m\alpha_{i,j}^ke_k,
\quad \mu(e_i\otimes f_p)=\sum\limits_{q=1}^n\beta_{i,p}^qf_q,
\quad \mu(f_p\otimes e_i)=\sum\limits_{q=1}^n\gamma_{p,i}^qf_q,
\quad \mu(f_p\otimes f_q)=\sum\limits_{k=1}^m\delta_{p,q}^ke_k.$$
A subset $\mathbb{L}(T)$ of $\Hom(V \otimes V,V)$ is {\it Zariski-closed} if it can be defined by a set of polynomial equations $T$ in the variables
$\alpha_{i,j}^k,\beta_{i,p}^q, \gamma_{p,i}^q, \delta_{p,q}^k$ ($1\le i,j,k\le m,\ 1\leq p,q\leq n$).

Let $\mathcal{S}^{m,n}$ be the set of all superalgebras of dimension $(m,n)$ defined by the family of polinomial super-identities $T$, understood as a subset $\mathbb{L}(T)$ of an affine variety $\Hom(V\otimes V, V)$. Then one can see that $\mathcal{S}^{m,n}$ is a Zariski-closed subset of the variety $\Hom(V\otimes V, V).$
The group $G=(\Aut V)_0\simeq\GL(V_0)\oplus\GL(V_1)$ acts on $\mathcal{S}^{m,n}$ by conjugations:
$$ (g * \mu )(x\otimes y) = g\mu(g^{-1}x\otimes g^{-1}y)$$
for $x,y\in V$, $\mu\in\mathbb{L}(T)$ and $g\in G$.

Thus, $\mathcal{S}^{m,n}$ is decomposed into $G$-orbits that correspond to the isomorphism classes of superalgebras. The dimension of the orbit is found via 
\begin{equation}\label{eq1}
\operatorname{dim}O(\mu)=n^2-\operatorname{dim}\mathfrak{Der}(J),
\end{equation}
where $\mathfrak{Der}(J)$ denotes the Lie algebra of derivations of $J$ corresponding to $\mu$.

Denote by $O(\mu)$ the orbit of $\mu\in\mathbb{L}(T)$ under the action of $G$ and by $\overline{O(\mu)}$ the Zariski closure of $O(\mu)$. Let $J, J' \in \mathcal{S}^{m,n}$  and $\lambda,\mu\in \mathbb{L}(T)$ represent $J$ and $J'$, respectively. We say that $\lambda$ degenerates to $\mu$ and write $\lambda\to \mu$ if $\mu\in\overline{O(\lambda)}$. Note that in this case we have $\overline{O(\mu)}\subset\overline{O(\lambda)}$. Hence, the definition of a degeneration does not depend on the choice of $\mu$ and $\lambda$, and we right indistinctly $J\to J'$ instead of $\lambda\to\mu$ and $O(J)$ instead of $O(\lambda)$. If $J\not\cong J'$, then the assertion $J\to J'$ is called a {\it proper degeneration}. We write $J\not\to J'$ if $J'\not\in\overline{O(J)}$.

Let $J$ be represented by $\lambda\in\mathbb{L}(T)$. Then $J$ is  {\it rigid} in $\mathbb{L}(T)$ if $O(\lambda)$ is an open subset of $\mathbb{L}(T)$.  Recall that a subset of a variety is called irreducible if it cannot be represented as a union of two non-trivial closed subsets. A maximal irreducible closed subset of a variety is called {\it irreducible component}.  In particular, $J$ is rigid in $\mathcal{S}^{m,n}$ iff $\overline{O(\lambda)}$ is an irreducible component of $\mathbb{L}(T)$. It is well known that any affine variety can be represented as a finite union of its irreducible components in a unique way. We denote by $\opn{Rig}(\mathcal{S}^{m,n})$ the set of rigid superalgebras in $\mathcal{S}^{m,n}$.

\begin{Remark}
There is no degeneration between two superalgebras of types $(n-i,i)$ and $(n-j,j)$ if $i\neq j$.
\end{Remark}

This statement follows from the fact that superalgebras of different types correspond to affine varieties of different dimensions. Suppose that superalgebras of types $(n-i,i)$ and $(n-j,j)$ correspond to the same variety $\mathbb{C}^{p^3+3pq^2}$ when $i\neq j$. In that case we have the following equality:
$$(n-i)^3+3(n-i)i^2=(n-j)^3+3(n-j)j^2.$$
Therefore, we obtain
$$(i-j)(3n^2-6n(i+j)+4(i^2+ij+j^2))=0.$$
Since $i\neq j$, the second bracket, which contains quadratic term with respect to $n$, must be zero. However, this requires
$$36(i+j)^2-48(i^2+ij+j^2)=24ij-12(i^2+j^2) \geq 0,$$
leading to
$$(i-j)^2\leq 0,$$
which cannot hold unless $i=j.$

\subsection{Principal notation}
Let $\mathscr{JS}^{m,n}$ be the set of all Jordan superalgebras of dimension $(m,n).$
Let $J$ be a Jordan superalgebra with a fixed basis $\{e_1,\dots,e_m,f_1,\dots f_n\}$, defined by
\[e_ie_j=\sum_{k=1}^m\alpha_{ij}^ke_k,\quad e_if_j=\sum_{k=1}^n\beta_{ij}^kf_k,\quad f_if_j=\sum_{k=1}^m\gamma_{ij}^ke_k.\]
In the sequel, we use the following notation:
\begin{enumerate}
\item $\a(J)$ is the Jordan superalgebra with the same underlying vector superspace as $J$ and defined by $f_if_j=\displaystyle\sum_{k=1}^n\gamma_{ij}^ke_k$. 
\item $J^1=J$, $J^r=J^{r-1}J+J^{r-2}J^2+\dots+ JJ^{r-1}$, and in every case $J^r=(J^r)_0\oplus (J^r)_1$.
\item $c_{i,j}=\displaystyle\frac{\tr (L(x)^i)\cdot\tr(L(y)^j)}{\tr( L(x)^i\cdot L(y)^j)}$ 
is the Burde invariant, where $L(x)$
is the left 
multiplication. This invariant $c_{i,j}$ 
is defined as a quotient of two polynomials in the structure constants of $J$, for all $x,y\in J$ such that both polynomials are not zero and $c_{i,j}$ 
is independent of the choice of $x,y$.
\end{enumerate}

\subsection{Methods}

First of all, if $J\to J'$ and $J\not\cong J'$, then $\dim\Aut(J)<\dim\Aut(J')$, where $\Aut(J)$ is the space of automorphisms of $J$. Secondly, if $J\to J'$ and $J'\to J''$, then $J\to J''$. If there is no $J'$ such that $J\to J'$ and $J'\to J''$ are proper degenerations, then the assertion $J\to J''$ is called a {\it primary degeneration}. If $\dim\Aut(J)<\dim\Aut(J'')$ and there are no $J'$ and $J'''$ such that $J'\to J$, $J''\to J'''$, $J'\not\to J'''$ and one of the assertions $J'\to J$ and $J''\to J'''$ is a proper degeneration,  then the assertion $J \not\to J''$ is called a {\it primary non-degeneration}. It suffices to prove only primary degenerations and non-degenerations to describe degenerations in the variety under consideration. It is easy to see that any superalgebra degenerates to the superalgebra with zero multiplication. From now on we use this fact without mentioning it.

Let us describe the methods for proving primary non-degenerations. The main tool for this is the following lemma~\cite{jord3}.

\begin{Lem}\label{lema:inv}
If $J\to J'$, then the following hold:
\begin{enumerate}

\item $\dim (J^r)_i\geq\dim (J'^r)_i$, for $i\in\Z_2;$
\item $(J)_0\to (J')_0;$
\item $\a(J)\to\a(J');$

\item If the Burde invariant exists for $J$ and $J'$, then both superalgebras have the same Burde invariant$;$
\item If $J$ is associative, then $J'$ must be associative. In fact, if $J$ satisfies a P.I. then $J'$ must satisfy the same P.I.
\end{enumerate}
\end{Lem}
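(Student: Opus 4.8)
The plan is to prove each item of Lemma~\ref{lema:inv} by exhibiting, for every listed invariant, the fact that it varies semicontinuously (or is locally constant) along $\mathrm{GL}_4(\mathbb{C})$-orbits and hence passes to orbit closures. Concretely, suppose $J \to J'$, so there is a family $g_t \in G = \mathrm{GL}(V_0) \oplus \mathrm{GL}(V_1)$ with $g_t * \lambda \to \mu$ as $t \to 0$, where $\lambda, \mu$ represent $J, J'$.

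\textbf{Item (1).} For a fixed $r$ and fixed parity $i$, the locus $\{\nu \in \mathbb{L}(T) : \dim(\nu^r)_i \leq d\}$ is Zariski-closed: the subspace $(\nu^r)_i$ is the image of a linear map whose matrix entries are polynomials in the structure constants of $\nu$, and having rank $\leq d$ is a closed condition (vanishing of all $(d{+}1)\times(d{+}1)$ minors). Since $g_t * \lambda$ is isomorphic to $\lambda$, it has $\dim((g_t*\lambda)^r)_i = \dim(J^r)_i$, so the whole orbit lies in the closed set with $d = \dim(J^r)_i$; the closure does too, hence $\dim(J'^r)_i \leq \dim(J^r)_i$.

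\textbf{Items (2) and (3).} These reduce (1)-style reasoning to a sub-superalgebra/associated construction. For (2): restricting the multiplication $\mu(e_i \otimes e_j) = \sum \alpha_{i,j}^k e_k$ to the even part is a $G$-equivariant linear projection $\mathbb{L}(T) \to \mathrm{Hom}(V_0 \otimes V_0, V_0)$ that commutes with the respective $\mathrm{GL}(V_0)$-actions, and a continuous equivariant map sends orbit closures into orbit closures; applying it to $\lambda \to \mu$ gives $(J)_0 \to (J')_0$. For (3): the map sending a superalgebra to $\a(J)$ (keep only the $f_if_j$ products) is again a $G$-equivariant polynomial map $\mathbb{L}(T) \to \mathbb{L}(T')$ for the appropriate identity set $T'$, so it too carries degenerations to degenerations; one must check that $\a(J)$ is again a Jordan superalgebra, which follows since zeroing out some products cannot create new violations of the graded identities (the Grassmann envelope argument, or direct inspection: every monomial in $J(x,y,z,t)$ that survives in $\a(J)$ already appeared in $J$).

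\textbf{Item (4).} The Burde invariant $c_{i,j}$ is a ratio of two polynomials in the structure constants that, where defined, is constant on the orbit. If both polynomials are nonzero on a Zariski-dense open subset of $\overline{O(\lambda)}$ and $c_{i,j}$ equals a constant $c$ there, then $c_{i,j} - c$ vanishes on that open set, hence (by density) the numerator minus $c$ times the denominator vanishes on all of $\overline{O(\lambda)}$; evaluating at a point of $O(\mu)$ where the denominator is nonzero forces the same value $c$ for $J'$. \textbf{Item (5).} A polynomial identity $P$ being satisfied is the condition that a list of polynomials (the coefficients of $P$ evaluated symbolically) vanish, which is Zariski-closed and $G$-invariant, so it passes from $O(\lambda)$ to $\overline{O(\lambda)} \ni \mu$; associativity is the special case $P = (xy)z - x(yz)$, and the graded/super version is handled identically by using the super-monomials.

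The main obstacle is bookkeeping rather than conceptual: in items (3) and (4) one must be careful that the auxiliary object stays inside the correct variety and that the ``generic non-vanishing'' hypothesis in (4) is actually what makes the argument go through (the invariant is only asserted to exist when the denominator is not identically zero, so the density argument is essential). Everything else is the standard principle that Zariski-closed $G$-stable conditions, and images under $G$-equivariant morphisms, are inherited by orbit closures; since this lemma is quoted from~\cite{jord3}, I would state the proof at this level of detail and refer there for the routine verifications.
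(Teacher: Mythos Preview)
The paper does not prove this lemma at all: it simply states it and cites~\cite{jord3}. Your sketch is exactly the kind of argument that reference contains, so in that sense your approach matches the intended one.

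Two small points worth tightening. In item~(3), the slogan ``zeroing out some products cannot create new violations of the graded identities'' is false in general (killing a product can break a cancellation); what actually makes $\a(J)$ a Jordan superalgebra is that in $\a(J)$ every triple product vanishes---any product of two basis elements is either zero or even, and even elements multiply trivially---so the super-Jordan identity holds for the trivial reason that every term in it is zero. In item~(4), the Burde invariant as defined in the paper depends on a choice of generic $x,y\in J$, not just on the structure constants; your argument is correct once you observe that for fixed $x,y$ the numerator and denominator are polynomials in the structure constants, the ratio is constant along the orbit wherever defined, and the hypothesis ``the invariant exists for $J'$'' guarantees a point where the denominator does not vanish. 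With those two clarifications your proof is complete, and your closing remark that one refers to~\cite{jord3} for the routine verifications is precisely what the paper itself does.
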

In the cases where all of these criteria can't be applied to prove $J\not\to J'$, we define $\mathcal{R}$ by a set of polynomial equations and give a basis of $V$, in which the structure constants of $\lambda$ give a solution to all these equations. Further on, we omit the verification of the fact that $\mathcal{R}$ is stable under the action of the subgroup of upper triangular matrices and of the fact that $\mu\not\in\mathcal{R}$ for any choice of a basis of $V$. These verifications can be done by direct calculations.

{\bf Degenerations of Graded algebras}.  Let
 $G$ be a trivial group and  let  $\mathcal {V}(\mathcal{F})$ be a variety of algebras defined by a family of  polynomial identities $\mathcal{F}$. It is important to notice that degeneration on the  $G$-graded variety $G\mathcal{V}( \mathcal{F})$ is a more restrictive notion than degeneration on the variety $\mathcal{V}(\mathcal{F})$, In fact, consider $A, A^\prime \in  G\mathcal  {V}(\mathcal{F})$  such that  $ A,  A^\prime  \in \mathcal{V}(\mathcal{F})$, a degeneration between the algebras $A$ and $A^\prime$  may not give rise  to a degeneration  between  the $G$-graded algebras $A$ and $A^\prime$, since  the matrices describing the  basis changes in  $G\mathcal  {V}(\mathcal{F})$  must preserve the $G$-graduation. Hence,  we have the following  result.

\begin{Lem}
 Let  $ A, A^\prime \in G\mathcal {V}(\mathcal{F}) \cap \mathcal {V}(\mathcal{F})$. If $A  \not \to  A^\prime $ as algebras, then $A \not \to A^\prime $ as $G$-graded algebras.

\end{Lem}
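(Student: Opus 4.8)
The plan is to argue by contraposition directly from the definitions of degeneration already set up in the excerpt. Suppose $A \to A'$ as $G$-graded algebras; I want to conclude $A \to A'$ as ungraded algebras, which is exactly the contrapositive of the statement. Recall that $A \to A'$ as $G$-graded algebras means that, fixing structure constants $\lambda$ for $A$ and $\mu$ for $A'$ in a homogeneous basis of $V = \bigoplus_{g} V_g$, we have $\mu \in \overline{O_{\mathrm{gr}}(\lambda)}$, where $O_{\mathrm{gr}}(\lambda)$ is the orbit of $\lambda$ under the subgroup $G_{\mathrm{gr}} \leq \GL(V)$ of grading-preserving automorphisms (in the super case, $\GL(V_0)\oplus\GL(V_1)$). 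Since $G_{\mathrm{gr}}$ is a subgroup of the full group $\GL(V)$, we have an inclusion of orbits $O_{\mathrm{gr}}(\lambda) \subseteq O(\lambda)$, where $O(\lambda)$ is the orbit under all of $\GL(V)$.

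From the orbit inclusion, taking Zariski closures (which is monotone) gives $\overline{O_{\mathrm{gr}}(\lambda)} \subseteq \overline{O(\lambda)}$. Hence $\mu \in \overline{O_{\mathrm{gr}}(\lambda)} \subseteq \overline{O(\lambda)}$, which by definition says $\lambda \to \mu$, i.e.\ $A \to A'$ as ungraded algebras. The one point that needs a remark is that the same pair of structure-constant tuples $(\lambda,\mu)$ is being used to witness both notions: a homogeneous basis of $V$ is in particular a basis of $V$, so the tuple $\lambda$ that represents the graded algebra $A$ inside $\mathbb{L}(\mathcal F)\cap(\text{graded locus})$ equally represents the ungraded algebra $A$ inside $\mathbb{L}(\mathcal F)$, and likewise for $\mu$; and by the remark in the excerpt that the degeneration relation does not depend on the choice of representatives, nothing is lost. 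Contrapositively, if $A \not\to A'$ as algebras then $A \not\to A'$ as $G$-graded algebras, which is the claim.

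There is essentially no obstacle here: the whole content is the monotonicity of Zariski closure under the orbit inclusion $O_{\mathrm{gr}}(\lambda)\subseteq O(\lambda)$ coming from $G_{\mathrm{gr}}\leq\GL(V)$. The only thing to be slightly careful about is bookkeeping — making explicit that the graded variety $G\mathcal V(\mathcal F)$ sits inside $\mathcal V(\mathcal F)$ as a closed subset cut out by the homogeneity conditions on the structure constants, and that the acting groups are nested compatibly — but this is immediate from the setup in Subsections on Degenerations and Principal notation and requires no computation. I would write the proof in three or four lines along exactly these lines.
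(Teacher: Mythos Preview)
Your proof is correct and follows exactly the idea the paper indicates in the paragraph preceding the lemma: since the grading-preserving automorphism group is a subgroup of $\GL(V)$, the graded orbit is contained in the ungraded orbit, and closure monotonicity gives the contrapositive. The paper does not spell this out any further than that one-sentence remark, so your write-up is in fact more detailed than what the paper provides.
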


Additionally, we need the following results from~\cite{degjor}
     
\begin{Th}\label{2d}
The graph of primary degenerations for two-dimensional Jordan algebras  has the following form:
\end{Th}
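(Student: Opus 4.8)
The statement to prove, Theorem \ref{2d}, asks for the complete graph of primary degenerations among the two-dimensional Jordan algebras. The plan is first to recall the full list of two-dimensional Jordan algebras: the indecomposable ones $\mathcal{B}_1,\mathcal{B}_2,\mathcal{B}_3$ from Table \ref{tb:joral}, together with the decomposable ones obtained as direct sums of one-dimensional Jordan algebras ($\mathcal{U}_1\oplus\mathcal{U}_1$, $\mathcal{U}_1\oplus\mathcal{U}_2$, $\mathcal{U}_2\oplus\mathcal{U}_2$) and the algebra with zero multiplication $\mathbb{C}^2$. For each such algebra $J$ one computes $\dim\Aut(J)$ (equivalently $\dim\mathfrak{Der}(J)$ and hence $\dim O(J)$ via \eqref{eq1}), which immediately yields a necessary partial order: $J\to J'$ with $J\not\cong J'$ forces $\dim O(J)>\dim O(J')$, i.e. $\dim\Aut(J)<\dim\Aut(J')$. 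This pins down the layers of the degeneration graph.

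Next I would establish the actual degenerations (the solid upward edges) by exhibiting explicit parametrized families of basis changes $g_t\in\GL_2(\mathbb{C})$ with $\lim_{t\to 0} g_t * \lambda = \mu$; these are the standard short computations for the dominant degenerations ($\mathcal{B}_1$ degenerating to $\mathcal{B}_3$, $\mathcal{U}_1\oplus\mathcal{U}_1$, etc., and everything degenerating to $\mathbb{C}^2$), and then by transitivity of $\to$ one fills in the rest. Then I would rule out the non-degenerations: the main tool is Lemma \ref{lema:inv}, applied in dimension two. Concretely, the invariants $\dim(J^2)_0$, the associativity/P.I. criterion (item 5), the passage to $\a(J)$ (item 3), and occasionally the Burde invariant $c_{i,j}$ separate the remaining incomparable pairs — for instance, an algebra that is a unital-type idempotent algebra cannot degenerate to one that is not associative, and algebras with $J^2=0$ of the wrong dimension are excluded by item (1). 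Where an invariant from Lemma \ref{lema:inv} does not suffice, one uses the stability-under-upper-triangular-matrices argument sketched just before the statement.

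The main obstacle is not any single hard computation but the bookkeeping: one must be exhaustive, checking every ordered pair of non-isomorphic two-dimensional Jordan algebras, proving either a degeneration or a non-degeneration, and then reducing to the primary ones so that the resulting Hasse-type diagram is correct and minimal. There is also a small subtlety in that $\mathcal{B}_1$ and $\mathcal{B}_2$ have the same underlying "shape" but are non-isomorphic (the structure constant $1$ versus $\tfrac12$ in $e_1e_2$), so one must verify they are genuinely distinct orbits and determine their relative position — here $\mathcal{B}_2$ is the split unital algebra $\mathbb{C}\oplus\mathbb{C}$ in disguise while $\mathcal{B}_1$ is not associative, so item (5) of Lemma \ref{lema:inv} does the separating work. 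Since the problem is finite and two-dimensional, every step is routine once the list and the automorphism dimensions are in hand; I would present the conclusion as the labeled degeneration graph and relegate the individual basis-change families and invariant computations to a table.
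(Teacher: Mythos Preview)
Your overall plan (list the algebras, stratify by $\dim\Aut$, exhibit explicit one-parameter degenerations, and rule out the rest via Lemma~\ref{lema:inv}) is exactly the right shape, and indeed the paper does not prove Theorem~\ref{2d} at all but imports it from~\cite{degjor}, where precisely this kind of argument is carried out.

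However, there is a concrete factual error that would make your graph come out wrong. You write that ``$\mathcal{B}_2$ is the split unital algebra $\mathbb{C}\oplus\mathbb{C}$ in disguise while $\mathcal{B}_1$ is not associative.'' Both assertions are false. The algebra $\mathcal{B}_1$ (with $e_1^2=e_1$, $e_1e_2=e_2$, $e_2^2=0$) is exactly the associative algebra of dual numbers $\mathbb{C}[\epsilon]/(\epsilon^2)$, with $e_1$ the unit. By contrast, $\mathcal{B}_2$ (with $e_1e_2=\tfrac12 e_2$) is \emph{not} associative: $(e_1e_1)e_2=\tfrac12 e_2$ but $e_1(e_1e_2)=\tfrac14 e_2$. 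Nor is $\mathcal{B}_2$ isomorphic to $\mathcal{U}_1\oplus\mathcal{U}_1=\mathbb{C}\oplus\mathbb{C}$: the latter is semisimple, whereas $\mathcal{B}_2$ contains the nonzero nilpotent $e_2$.

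This matters because the central non-degeneration in the two-dimensional picture is that $\mathcal{B}_2$ is rigid: nothing except $\mathcal{B}_2$ itself lies above it. The obstruction is precisely Lemma~\ref{lema:inv}(5) in the correct direction: $\mathcal{U}_1\oplus\mathcal{U}_1$, $\mathcal{B}_1$, and $\mathcal{U}_1\oplus\mathcal{U}_2$ are all associative, so none of them can degenerate to the non-associative $\mathcal{B}_2$. With the associativity roles reversed as you have them, the argument collapses and you would likely draw a spurious edge into $\mathcal{B}_2$. Fix this identification and your outline goes through.
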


\begin{footnotesize}

$${\begin{tikzpicture}[->,>=stealth',shorten >=0.0cm,auto,node distance=0.9cm,
                    thick,main node/.style={rectangle,draw,fill=gray!12,rounded corners=1ex,font=\sffamily \tiny
                    \bfseries },rigid node/.style={rectangle,draw,fill=black!20,rounded corners=1.5ex,font=\sffamily \bf \bfseries },style={draw,font=\sffamily \scriptsize \bfseries }]
\node (0)   {0};

\node (00a1) [right of=0] {};
\node (00a2) [right of=00a1] {};
\node (00a3) [right of=00a2] {};
\node (00a4) [right of=00a3] {};

\node (01) [below of=0] {1};

\node (01a1) [right of=01] {};
\node (01a2) [right of=01a1] {};
\node (01a3) [right of=01a2] {};
\node (01a4) [right of=01a3] {};

\node (02) [below of=01] {2};

\node (02a1) [right of=02] {};
\node (02a2) [right of=02a1] {};
\node (02a3) [right of=02a2] {};
\node (02a4) [right of=02a3] {};

\node (04) [below of=02] {4};

\node (04a1) [right of=04] {};
\node (04a2) [right of=04a1] {};
\node (04a3) [right of=04a2] {};
\node (04a4) [right of=04a3] {};

\node  [rigid node] (b4) [right of=00a2] {$\mathcal{U}_{1} \oplus \mathcal{U}_{1}$};
\node  [main node] (b1) [right of=01a1] {$\mathcal{B}_{1}$};	
\node  [rigid node] (b2) [right of=02a3] {$\mathcal{B}_{2}$};	
\node  [main node] (b5) [right of=01a3] {$\mathcal{U}_{1} \oplus \mathcal{U}_{2}$};
\node  [main node] (b3) [right of=02a2] {$\mathcal{B}_{3}$};
\node  [main node] (c2) [right of=04a2] {$\mathbb{C}^{2}$};

\path[every node/.style={font=\sffamily\small}]

(b4) edge   node[above] {} (b1)
(b4) edge   node[above] {} (b5)
(b5) edge   node[above] {} (b3)
(b1) edge   node[above] {} (b3)
(b2) edge   node[above] {} (c2)
(b3) edge   node[above] {} (c2);

\end{tikzpicture}}$$

\end{footnotesize}
\begin{normalsize}
\begin{Th}
\label{3d}
The graph of primary degenerations for three-dimensional Jordan algebras  has the following form:
\end{Th}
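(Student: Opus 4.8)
The statement to be proved is Theorem \ref{3d}: the graph of primary degenerations for three-dimensional Jordan algebras. This is a structural/enumerative result, so the plan is essentially a bookkeeping plan built on the two-dimensional case (Theorem \ref{2d}) and the known classification of three-dimensional Jordan algebras (the indecomposable ones listed in Table \ref{tb:joral} as $\mathcal{T}_1,\dots,\mathcal{T}_{10}$, together with the decomposable ones obtained from one- and two-dimensional pieces).

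The plan is to proceed in four stages. \textbf{Stage 1 (enumerate the vertices).} First I would list all three-dimensional Jordan algebras up to isomorphism: the ten indecomposable ones $\mathcal{T}_1,\dots,\mathcal{T}_{10}$, all direct sums of lower-dimensional indecomposables ($\mathcal{U}_1\oplus\mathcal{U}_1\oplus\mathcal{U}_1$, $\mathcal{U}_1\oplus\mathcal{U}_1\oplus\mathcal{U}_2$, $\mathcal{B}_i\oplus\mathcal{U}_j$ for the relevant $i,j$, etc.), and finally $\mathbb{C}^3$ with zero multiplication. \textbf{Stage 2 (dimensions of orbits).} For each such algebra I would compute $\dim\mathfrak{Der}$ and hence, via \eqref{eq1}, the orbit dimension; this both organizes the vertices into levels (as in the picture for Theorem \ref{2d}) and immediately rules out degenerations going "upward" or between algebras of equal orbit dimension. \textbf{Stage 3 (positive degenerations).} For each candidate pair $J\to J'$ with $\dim O(J)>\dim O(J')$, I would exhibit an explicit parametrized change of basis $g_t\in\GL_3(\mathbb{C})$ such that $\lim_{t\to 0} g_t * \mu_J = \mu_{J'}$; many of these are inherited from the two-dimensional degenerations of Theorem \ref{2d} by adjoining a trivial summand, and the remaining ones are short explicit one-parameter families. \textbf{Stage 4 (non-degenerations).} For each pair that is \emph{not} supposed to degenerate, I would apply Lemma \ref{lema:inv}: the invariants $\dim(J^r)_i$, the even-part degeneration $(J)_0\to(J')_0$, the associated algebra $\a(J)$, the Burde invariant $c_{i,j}$, and closure under P.I.\ together should separate all such pairs; in the residual cases one writes down the stable set $\mathcal{R}$ of polynomial equations as described in the Methods subsection. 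Finally I would pass from the full degeneration relation to its transitive reduction to obtain exactly the \emph{primary} degenerations drawn in the graph.

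The main obstacle I expect is not conceptual but combinatorial: with roughly twenty vertices there are on the order of a few hundred ordered pairs to adjudicate, and while the orbit-dimension filtration and the invariants of Lemma \ref{lema:inv} dispatch the large majority, there will be a handful of stubborn pairs (typically algebras with the same even part, the same $\a(J)$, and matching low-degree invariants) for which one must construct an ad hoc invariant set $\mathcal{R}$ and verify its stability under the Borel subgroup. The other delicate point is ensuring that the final graph records primary degenerations only — i.e., that no drawn arrow factors through an intermediate algebra — which requires checking the transitivity bookkeeping carefully against Stage 3. I would also double-check consistency with Theorem \ref{2d} via Lemma \ref{lema:inv}(2), since $(J)_0$ for a three-dimensional algebra lands in dimension $\le 3$ and the two-dimensional restrictions must be respected.

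Finally, I would present the result as a Hasse-type diagram exactly parallel to the one displayed for Theorem \ref{2d}, grouping the algebras by orbit dimension on horizontal levels and drawing an edge $J\to J'$ precisely for each primary degeneration established in Stages 3–4, with $\mathbb{C}^3$ as the unique minimal element and the rigid algebras (those whose orbit closures are irreducible components of $\mathscr{JS}^{3,0}$) as the maximal elements.
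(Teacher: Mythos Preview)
Your plan is the standard and correct methodology for establishing such a degeneration graph, and it is essentially what is carried out in the reference \cite{degjor} from which Theorem~\ref{3d} is taken; note, however, that the present paper does \emph{not} prove Theorem~\ref{3d} at all --- it is quoted without proof from \cite{degjor} as a needed auxiliary result, so there is no ``paper's own proof'' to compare against beyond the bare citation.

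One small point of confusion in your write-up: you invoke Lemma~\ref{lema:inv}(2) (the even-part criterion $(J)_0\to(J')_0$) and the construction $\a(J)$ as tools for Stage~4, but for ordinary three-dimensional Jordan algebras viewed as superalgebras of type $(3,0)$ these are vacuous --- $(J)_0=J$ and $\a(J)$ has zero multiplication --- so they give no information. The genuine non-degeneration obstructions in this setting come from $\dim J^r$, associativity and other polynomial identities, the Burde invariants $c_{i,j}$, and in the residual cases the ad hoc closed sets $\mathcal{R}$ stable under the Borel, exactly as you indicate elsewhere in the plan.
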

\scriptsize

\begin{center}
\begin{tikzpicture}[->,>=stealth',shorten >=0.0cm,auto,node distance=1cm,thick,
                    main node/.style={rectangle,draw,fill=gray!12,rounded corners=1.5ex,font=\sffamily \tiny \bfseries },
                    rigid node/.style={rectangle,draw,fill=black!20,rounded corners=1.5ex,font=\sffamily \bf \bfseries },
                    style={draw,font=\sffamily \scriptsize \bfseries }]
\node (0)   {0};

\node (00a1) [right of=0] {};
\node (00a2) [right of=00a1] {};
\node (00a3) [right of=00a2] {};
\node (00a4) [right of=00a3] {};
\node (00a5) [right of=00a4] {};
\node (00a6) [right of=00a5] {};
\node (00a7) [right of=00a6] {};
\node (00a8) [right of=00a7] {};
\node (00a9) [right of=00a8] {};
\node (00a10) [right of=00a9] {};
\node (00a11) [right of=00a10] {};
\node (00a12) [right of=00a11] {};
\node (00a13) [right of=00a12] {};
\node (00a14) [right of=00a13] {};
\node (00a15) [right of=00a14] {};
\node (00a16) [right of=00a15] {};
\node (00a17) [right of=00a16] {};
\node (00a18) [right of=00a17] {};
\node (00a19) [right of=00a18] {};
\node (00a20) [right of=00a19] {};
\node (00a21) [right of=00a20] {};
\node (00a22) [right of=00a21] {};
\node (00a23) [right of=00a22] {};
\node (00a24) [right of=00a23] {};
\node (00a25) [right of=00a24] {};
\node (00a26) [right of=00a25] {};
\node (00a27) [right of=00a26] {};

\node (01) [below of=0] {1};

\node (01a1) [right of=01] {};
\node (01a2) [right of=01a1] {};
\node (01a3) [right of=01a2] {};
\node (01a4) [right of=01a3] {};
\node (01a5) [right of=01a4] {};
\node (01a6) [right of=01a5] {};
\node (01a7) [right of=01a6] {};
\node (01a8) [right of=01a7] {};
\node (01a9) [right of=01a8] {};
\node (01a10) [right of=01a9] {};
\node (01a11) [right of=01a10] {};
\node (01a12) [right of=01a11] {};
\node (01a13) [right of=01a12] {};
\node (01a14) [right of=01a13] {};
\node (01a15) [right of=01a14] {};

\node (02) [below of=01] {2};

\node (02a1) [right of=02] {};
\node (02a2) [right of=02a1] {};
\node (02a3) [right of=02a2] {};
\node (02a4) [right of=02a3] {};
\node (02a5) [right of=02a4] {};
\node (02a6) [right of=02a5] {};
\node (02a7) [right of=02a6] {};
\node (02a8) [right of=02a7] {};
\node (02a9) [right of=02a8] {};
\node (02a10) [right of=02a9] {};
\node (02a11) [right of=02a10] {};
\node (02a12) [right of=02a11] {};
\node (02a13) [right of=02a12] {};
\node (02a14) [right of=02a13] {};
\node (02a15) [right of=02a14] {};
\node (02a16) [right of=02a15] {};
\node (02a17) [right of=02a16] {};
\node (02a18) [right of=02a17] {};
\node (02a19) [right of=02a18] {};
\node (02a20) [right of=02a19] {};
\node (02a21) [right of=02a20] {};
\node (02a22) [right of=02a21] {};
\node (02a23) [right of=02a22] {};
\node (02a24) [right of=02a23] {};
\node (02a25) [right of=02a24] {};
\node (02a26) [right of=02a25] {};
\node (02a27) [right of=02a26] {};

\node (03)[below of=02]{3};

\node (03a1) [right of=03] {};
\node (03a2) [right of=03a1] {};
\node (03a3) [right of=03a2] {};
\node (03a4) [right of=03a3] {};
\node (03a5) [right of=03a4] {};
\node (03a6) [right of=03a5] {};
\node (03a7) [right of=03a6] {};
\node (03a8) [right of=03a7] {};
\node (03a9) [right of=03a8] {};
\node (03a10) [right of=03a9] {};
\node (03a11) [right of=03a10] {};
\node (03a12) [right of=03a11] {};
\node (03a13) [right of=03a12] {};
\node (03a14) [right of=03a13] {};
\node (03a15) [right of=03a14] {};
\node (03a16) [right of=03a15] {};
\node (03a17) [right of=03a16] {};
\node (03a18) [right of=03a17] {};
\node (03a19) [right of=03a18] {};
\node (03a20) [right of=03a19] {};
\node (03a21) [right of=03a20] {};
\node (03a22) [right of=03a21] {};
\node (03a23) [right of=03a22] {};
\node (03a24) [right of=03a23] {};
\node (03a25) [right of=03a24] {};
\node (03a26) [right of=03a25] {};
\node (03a27) [right of=03a26] {};
*\node (03a28) [right of=03a27] {};

\node (04) [below of=03] {4};

\node (04a1) [right of=04] {};
\node (04a2) [right of=04a1] {};
\node (04a3) [right of=04a2] {};
\node (04a4) [right of=04a3] {};
\node (04a5) [right of=04a4] {};
\node (04a6) [right of=04a5] {};+
\node (04a7) [right of=04a6] {};
\node (04a8) [right of=04a7] {};
\node (04a9) [right of=04a8] {};
\node (04a10) [right of=04a9] {};
\node (04a11) [right of=04a10] {};
\node (04a12) [right of=04a11] {};
\node (04a13) [right of=04a12] {};
\node (04a14) [right of=04a13] {};
\node (04a15) [right of=04a14] {};
\node (04a16) [right of=04a15] {};
\node (04a17) [right of=04a16] {};
\node (04a18) [right of=04a17] {};
\node (04a19) [right of=04a18] {};
\node (04a20) [right of=04a19] {};
\node (04a21) [right of=04a20] {};
\node (04a22) [right of=04a21] {};
\node (04a23) [right of=04a22] {};
\node (04a24) [right of=04a23] {};
\node (04a25) [right of=04a24] {};
\node (04a26) [right of=04a25] {};
\node (04a27) [right of=04a26] {};
\node (04a28) [right of=04a27] {};

\node (05) [below of=04] {5};

\node (05a1) [right of=05] {};
\node (05a2) [right of=05a1] {};
\node (05a3) [right of=05a2] {};
\node (05a4) [right of=05a3] {};
\node (05a5) [right of=05a4] {};
\node (05a6) [right of=05a5] {};
\node (05a7) [right of=05a6] {};
\node (05a8) [right of=05a7] {};
\node (05a9) [right of=05a8] {};
\node (05a10) [right of=05a9] {};
\node (05a11) [right of=05a10] {};
\node (05a12) [right of=05a11] {};
\node (05a13) [right of=05a12] {};
\node (05a14) [right of=05a13] {};
\node (05a15) [right of=05a14] {};
\node (05a16) [right of=05a15] {};
\node (05a17) [right of=05a16] {};
\node (05a18) [right of=05a17] {};
\node (05a19) [right of=05a18] {};
\node (05a20) [right of=05a19] {};
\node (05a21) [right of=05a20] {};
\node (05a22) [right of=05a21] {};
\node (05a23) [right of=05a22] {};
\node (05a24) [right of=05a23] {};
\node (05a25) [right of=05a24] {};
\node (05a26) [right of=05a25] {};
\node (05a27) [right of=05a26] {};
\node (05a28) [right of=05a27] {};

\node (06)[below of=05] {6};

\node (09) [below of=06] {9};

\node (06a1) [right of=06] {};
\node (06a2) [right of=06a1] {};
\node (06a3) [right of=06a2] {};
\node (06a4) [right of=06a3] {};
\node (06a5) [right of=06a4] {};
\node (06a6) [right of=06a5] {};
\node (06a7) [right of=06a6] {};
\node (06a8) [right of=06a7] {};
\node (06a9) [right of=06a8] {};
\node (06a10) [right of=06a9] {};
\node (06a11) [right of=06a10] {};
\node (06a12) [right of=06a11] {};
\node (06a13) [right of=06a12] {};
\node (06a14) [right of=06a13] {};
\node (06a15) [right of=06a14] {};
\node (06a16) [right of=06a15] {};
\node (06a17) [right of=06a16] {};
\node (06a18) [right of=06a17] {};
\node (06a19) [right of=06a18] {};
\node (06a20) [right of=06a19] {};
\node (06a21) [right of=06a20] {};
\node (06a22) [right of=06a21] {};
\node (06a23) [right of=06a22] {};
\node (06a24) [right of=06a23] {};
\node (06a25) [right of=06a24] {};
\node (06a26) [right of=06a25] {};
\node (06a27) [right of=06a26] {};
\node (06a28) [right of=06a27] {};

\node  [main node] (t1) [right of = 02a2] {$\T_{1}$};

\node  [main node] (t12) [right of=01a4] {$\mathcal{B}_1 \oplus \mathcal{U}_1$};	
\node  [main node] (t15) [right of=02a4] {$\mathcal{B}_1 \oplus \mathcal{U}_2$};	
\node  [main node] (t3)  [right of=03a4] {$\T_3$};

\node  [rigid node] (t11) [right of=00a6] {$\mathcal{U}_1 \oplus \mathcal{U}_1 \oplus \mathcal{U}_1$};	
\node  [main node] (t13) [right of=01a6] {$\mathcal{U}_1 \oplus \mathcal{U}_1 \oplus \mathcal{U}_2$};	
\node  [main node] (t16) [right of=02a6] {$\mathcal{B}_3 \oplus \mathcal{U}_1$};
\node  [main node] (t17) [right of=04a6] {$\mathcal{U}_1 \oplus \mathcal{U}_2 \oplus \mathcal{U}_2$};

\node  [rigid node] (t5)  [right of=01a8] {$\T_{5}$};		
\node  [main node] (t10) [right of=02a8] {$\T_{10}$};
\node  [main node] (t4)  [right of=04a8] {$\T_{4}$};
\node  [main node] (t19) [right of=05a8] {$\mathcal{B}_3 \oplus \mathcal{U}_2$};

\node  [main node] (t8)  [right of=02a10] {$\T_{8}$};
\node  [main node] (t18) [right of=03a10] {$\mathcal{B}_2 \oplus \mathcal{U}_2$};
\node  [main node] (t2)  [right of=04a10] {$\T_{2}$};
\node  [rigid node] (t7) [right of=06a10] {$\T_{7}$};

\node  [rigid node] (t14) [right of=02a12] {$\mathcal{B}_2 \oplus \mathcal{U}_1$};
\node  [main node] (t6)  [right of=03a12] {$\T_6$};

\node  [rigid node] (t9) [right of=02a14] {$\T_9$};

\node  [main node] (t20) [below of=06a9] {$\mathbb{C}^3$};

\path[every node/.style={font=\sffamily\small}]

(t1) edge   node[above] {} (t2)
(t1) edge   node[above] {} (t3)
(t1) edge   node[above] {} (t3)

(t2) edge   node[above] {} (t19)
(t2) edge   node[above] {} (t19)

(t3) edge   node[above] {} (t4)

(t4) edge   node[above] {} (t19)

(t5) edge   node[above] {} (t8)
(t5) edge   node[above] {} (t10)

(t6) edge   node[above] {} (t4)

(t7) edge   node[above] {} (t20)

(t8) edge   node[above] {} (t18)
(t8) edge   node[above] {} (t18)

(t9) edge   node[above] {} (t6)

(t10) edge   node[above] {} (t18)
(t10) edge   node[above] {} (t2)

(t11) edge   node[above] {} (t12)
(t11) edge   node[above] {} (t13)

(t12) edge   node[above] {} (t1)
(t12) edge   node[above] {} (t15)
(t12) edge   node[above] {} (t16)

(t13) edge   node[above] {} (t15)
(t13) edge   node[above] {} (t16)

(t14) edge   node[above] {} (t6)
(t14) edge   node[above] {} (t17)
(t14) edge   node[above] {} (t18)

(t15) edge   node[above] {} (t3)
(t15) edge   node[above] {} (t3)

(t16) edge   node[above] {} (t3)
(t16) edge   node[above] {} (t17)

(t17) edge   node[above] {} (t19)
(t17) edge   node[above] {} (t19)

(t18) edge   node[above] {} (t4)

(t19) edge   node[above] {} (t20);

\end{tikzpicture}
\end{center}
\end{normalsize}

\section{Algebraic classification of four dimensional Jordan superalgebras}
All four dimensional indecomposable Jordan superalgebras are obtained in~\cite{lowcom} while classifying low-dimensional commutative power-associative superalgebras. Below we present our classification of all four dimensional Jordan superalgebras. Dimensions of orbits in the theorems below are calculated using the formula~(\ref{eq1}).

\begin{Th}
    Up to isomorphism there are 19 Jordan superalgebras of the type $(1,3)$, which are presented below with some additional information:

\begin{center}
\begin{longtable}{l|c|l|l}
     \textnumero & Orbit & Multiplication rules & Decomposition \\
     \hline
     $\bf J_1$ & 6 & $f_1f_2=e$ & $\mathcal{S}_2^3 \oplus \mathcal{S}_1^1$\\
     $\bf J_2$ & 6 & $ef_1=f_2$ & $\mathcal{S}_3^3 \oplus \mathcal{S}_1^1$\\
     $\bf J_3$ & 9 & $ef_1=f_2,$ $f_1f_2=e$  & $\mathcal{S}_1^3 \oplus \mathcal{S}_1^1$\\
     $\bf J_4$ & 9 & $ef_1=f_2,$ $f_1f_3=e$ & Indecomposable \\
     $\bf J_5$ & 12 & $ef_1=f_2,$ $f_2f_3=e$ & Indecomposable\\
     $\bf J_6$ & 11 & $ef_1=f_2,$ $ef_2=f_3$  & Indecomposable\\
     $\bf J_7$ & 7 & $e^2=e$ & $\mathcal{U}_1 \oplus \mathcal{S}_1^1 \oplus \mathcal{S}_1^1 \oplus \mathcal{S}_1^1$\\
     $\bf J_8$ & 10 & $e^2=e, \ ef_3=\frac{1}{2}f_3$ & $\mathcal{S}_1^2 \oplus \mathcal{S}_1^1 \oplus \mathcal{S}_1^1$\\
     $\bf J_{9}$ & 10 & $e^2=e, \ ef_3=f_3$ & $\mathcal{S}_2^2 \oplus \mathcal{S}_1^1 \oplus \mathcal{S}_1^1$\\
     $\bf J_{10}$ & 9 & $e^2=e, \ ef_2=\frac{1}{2}f_2, \ ef_3=\frac{1}{2}f_3$ & $\mathcal{S}_5^3 \oplus \mathcal{S}_1^1$\\
     $\bf J_{11}$ & 10 & $e^2=e, \ ef_2=\frac{1}{2}f_2, \ ef_3=\frac{1}{2}f_3, \ f_2f_3=e$ & $\mathcal{S}_7^3 \oplus \mathcal{S}_1^1$\\
     $\bf J_{12}$ & 12 & $e^2=e, \ ef_2=\frac{1}{2}f_2, \ ef_3=f_3$ & $\mathcal{S}_4^3 \oplus \mathcal{S}_1^1$\\
     $\bf J_{13}$ & 11 & $e^2=e, \ ef_2=f_2, \ ef_3=f_3$ & $\mathcal{S}_6^3 \oplus \mathcal{S}_1^1$\\
     $\bf J_{14}$ & 12 & $e^2=e, \ ef_2=f_2, \ ef_3=f_3, \ f_2f_3=e$ & $\mathcal{S}_8^3 \oplus \mathcal{S}_1^1$\\
     $\bf J_{15}$ & 4 & $e^2=e, \ ef_1=\frac{1}{2}f_1, \ ef_2=\frac{1}{2}f_2, \ ef_3=\frac{1}{2}f_3$ & Indecomposable\\
     $\bf J_{16}$ & 9 & $e^2=e, \ ef_1=\frac{1}{2}f_1, \ ef_2=\frac{1}{2}f_2, \ ef_3=f_3$ & Indecomposable\\
     $\bf J_{17}$ & 10 & $e^2=e, \ ef_1=\frac{1}{2}f_1, \ ef_2=f_2, \ ef_3=f_3$ & Indecomposable\\
     $\bf J_{18}$ & 7 & $e^2=e, \ ef_1=f_1, \ ef_2=f_2, \ ef_3=f_3$ & Indecomposable\\
     $\bf J_{19}$ & 10 & $e^2=e, \ ef_1=f_1, \ ef_2=f_2, \ ef_3=f_3, \ f_1f_2=e$ & Indecomposable

\end{longtable}
\end{center}
\end{Th}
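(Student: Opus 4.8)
The plan is to obtain the list of $19$ superalgebras of type $(1,3)$ by a systematic enumeration: a Jordan superalgebra $J = J_0 \oplus J_1$ with $\dim J_0 = 1$ and $\dim J_1 = 3$ is determined by the products $e^2 \in J_0$, the action of $e$ on $J_1$ (a linear map $L(e)\colon J_1 \to J_1$, which by graded commutativity equals right multiplication up to sign), and the symmetric pairing $J_1 \times J_1 \to J_0$, $f_if_j \mapsto \gamma_{ij}e$ (symmetric because $|f_i||f_j|=1$ forces $f_if_j = (-1)^{1}f_jf_i$... wait, in a Jordan superalgebra odd$\times$odd is again symmetric as a bilinear form into the even part). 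First I would normalise $e^2$: either $e^2 = e$ or $e^2 = 0$; the case $e^2 = 0$ with $e$ acting as zero on $J_1$ reduces to a ``pure'' structure governed only by the symmetric form $\gamma$ and by $L(e)$, and the case $e^2 = e$ lets me simultaneously diagonalise $L(e)$ using that $e$ is an idempotent (the Peirce eigenvalues for a Jordan idempotent acting on an odd module are forced to lie in $\{0, \tfrac12, 1\}$, which is exactly what one sees in the table).

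Next I would impose the graded Jordan identity $J(x,y,z,t) = 0$ on all choices of basis arguments to cut down the parameter space. Concretely: with $e^2 = e$, writing $J_1$ in a basis adapted to the Peirce decomposition $J_1 = J_1(0) \oplus J_1(\tfrac12) \oplus J_1(1)$, the linearised Jordan identities constrain which products $f_if_j$ can be nonzero — e.g. the $J_1(0)$-part must annihilate and pair trivially, the pairing between different Peirce components must vanish, and on $J_1(\tfrac12)$ and $J_1(1)$ the form $\gamma$ is constrained by compatibility with $L(e)$ (this is why $f_2f_3 = e$ appears only when $ef_2 = \tfrac12 f_2, ef_3 = \tfrac12 f_3$ or $ef_2 = f_2, ef_3 = f_3$, and never mixing $\tfrac12$ with $1$). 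When $e^2 = 0$, the identity forces $L(e)$ to be nilpotent and the combined data $(L(e), \gamma)$ to satisfy a small system whose solutions up to the $\mathrm{GL}_1 \times \mathrm{GL}_3$-action give the nilpotent-type superalgebras $J_1$–$J_6$.

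Then I would carry out the isomorphism reduction: the structure group here is $\mathrm{GL}(J_0) \times \mathrm{GL}(J_1) = \mathbb{C}^\times \times \mathrm{GL}_3(\mathbb{C})$, and I would use the $\mathrm{GL}_3$-action to bring each symmetric form $\gamma$ to canonical form (rank $0$, $1$, or $2$ — rank $3$ being excluded by the Jordan identity in this dimension count) while respecting the already-fixed $L(e)$, and rescale via $\mathbb{C}^\times$ to normalise the surviving structure constant of $\gamma$. Matching against the classification of indecomposable pieces in Tables~\ref{tb:joral} and~\ref{tb:supal} (which catalogue everything of dimension $\le 3$), each resulting superalgebra is either decomposable — and then identified by decomposing off copies of $\mathcal{S}_1^1$ (a one-dimensional odd null ideal) and $\mathcal{U}_1$ — or indecomposable and hence one of $J_4, J_5, J_6, J_{15}, J_{16}, J_{17}, J_{18}, J_{19}$; here I can invoke~\cite{lowcom}, where the indecomposable four-dimensional Jordan superalgebras were already obtained, so the task is really to confirm the decomposable ones and assemble the full list. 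Finally, the orbit dimensions in the table are computed from~\eqref{eq1} by finding $\dim \mathfrak{Der}(J)$ for each, which is a routine linear-algebra computation (solve the defining linear system for $D$ on the basis).

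The main obstacle is the bookkeeping in the second step: ensuring that \emph{every} instantiation of the graded Jordan identity $J(x,y,z,t)=0$ has been checked, since with four slots and two parities there are many symmetry types, and it is easy to miss a relation that would merge two apparent cases or rule one out. A secondary difficulty is verifying that the list contains no redundancies — i.e. that the $24$ (here $19$) candidates are pairwise non-isomorphic — for which the invariants $\dim \mathfrak{Der}(J)$ (equivalently the orbit dimension), the dimensions $\dim(J^r)_i$, the isomorphism type of $J_0$, and the associated superalgebra $\a(J)$ from the principal notation list suffice to separate them, with only a couple of pairs needing an explicit ad hoc argument.
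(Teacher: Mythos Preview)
Your overall strategy matches the paper's: split on whether $e^2=e$ or $e^2=0$, bring $L(e)|_{J_1}$ to a normal form, impose the graded Jordan identity $J(x,y,z,t)=0$ on basis elements, and then normalise the remaining structure constants using the $\mathbb{C}^\times\times\mathrm{GL}_3(\mathbb{C})$-action. The paper does exactly this, running through the three Jordan-form shapes of the $3\times 3$ matrix of $L(e)$ and checking the identities case by case; in the idempotent case the non-diagonal Jordan blocks are ruled out by $J(e,e,e,f_i)=0$, which recovers your Peirce picture with eigenvalues in $\{0,\tfrac12,1\}$.

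There is, however, a genuine slip that would derail your normalisation step. The odd--odd product is \emph{anti}symmetric, not symmetric: super-commutativity $xy=(-1)^{|x||y|}yx$ gives $f_if_j=-f_jf_i$ for odd $f_i,f_j$, so in particular $f_i^2=0$ and the pairing $J_1\times J_1\to J_0$ is an alternating form, determined by the three constants $\xi_1=f_1f_2$, $\xi_2=f_1f_3$, $\xi_3=f_2f_3$ (exactly as the paper writes it). Consequently your plan to ``bring each symmetric form $\gamma$ to canonical form (rank $0$, $1$, or $2$)'' is off: an alternating form on $\mathbb{C}^3$ has rank $0$ or $2$ automatically, and the exclusion of rank $3$ has nothing to do with the Jordan identity. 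In practice this means the normalisation in the nilpotent case ($e^2=0$) is governed by the interaction of the nilpotent $L(e)$ with a single nonzero alternating form, which is how $J_1$--$J_5$ are separated in the paper (the kernel of the form can meet the image of $L(e)$ in different ways); and in the Peirce cases the alternating form can only be supported on a $2$-dimensional eigenspace, which is why $f_2f_3=e$ appears for the $(\tfrac12,\tfrac12)$ and $(1,1)$ blocks but no ``rank~$1$'' case ever occurs. Once you correct this, your outline goes through and coincides with the paper's argument.
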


\begin{proof}
As $\jor_0$ is a Jordan algebra, we have subcases $\jor_0 \cong \mathcal{U}_1$ and $\jor_0 \cong \mathcal{U}_2$.

Then we have the following multiplications for $e \in J_0, \  f_1,f_2,f_3 \in J_1$ 
\begin{center}
\begin{tabular}{ll}
  $e f_1= \alpha_1 f_1 + \alpha_2 f_2 + \alpha_3 f_3, \ $ & $f_1 f_2=\xi_1 e ,$\\
  $e f_2= \beta_1 f_1 + \beta_2 f_2 + \beta_3 f_3, \ $ & $f_1 f_3=\xi_2 e,$ \\
  $e f_3= \gamma_1 f_1 + \gamma_2 f_2 + \gamma_3 f_3, \ $ & $f_2 f_3=\xi_3 e.$
\end{tabular}
\end{center}

The linear operator $L_{x}: \jor \to \jor, x\in \jor $ such that $L_{x}(y)=xy$ is called a left multiplication operator.
It is obvious that:
$$x\in \jor_0, \  L_{x}: \jor_0 \to \jor_0, \  L_{x}: \jor_1 \to \jor_1,$$
$$x\in \jor_1, \  L_{x}: \jor_0 \to \jor_1, \  L_{x}: \jor_1 \to \jor_0.$$

For the action of the operator $L_{e}$ on $\jor_1$ we can write the following matrix
$$\begin{pmatrix}
  \alpha_1 & \alpha_2 & \alpha_3\\  
  \beta_1 & \beta_2 & \beta_3\\
  \gamma_1 & \gamma_2 & \gamma_3
\end{pmatrix}.$$
However, it is easy to prove that, by using a simple change of basis, the matrix of $L_{e}$ has one of the following forms:
$$\begin{array}{ccc}
 \left(\begin{array}{ccc}
 \mu_1 & 0 & 0\\
 0 & \mu_2 & 0\\
 0 & 0 & \mu_3
 \end{array}\right), &
  \left(\begin{array}{ccc}
\mu_1 & 1 & 0\\
0 & \mu_1 & 0\\
0 & 0 & \mu_3
 \end{array}\right), &
  \left(\begin{array}{ccc}
\mu_1 & 1 & 0\\
0 & \mu_1 & 1\\
0 & 0 & \mu_1
 \end{array}\right).
 \end{array}$$

\noindent
\underline{Let $\jor_0 \cong \mathcal{U}_2$.} 
\medskip

\noindent
\textbf{i)} Let $L_{e} \simeq \left(\begin{array}{ccc}
 \mu_1 & 0 & 0\\
 0 & \mu_2 & 0\\
 0 & 0 & \mu_3
 \end{array}\right),$ then the rule of multiplication can be written as follows:
 \begin{center}
 \begin{tabular}{llllll}
  $e f_1= \mu_1 f_1,$ & $e f_2= \mu_2 f_2,$ & $e f_3= \mu_3 f_3,$ & $f_1 f_2=\xi_1 e ,$ &  $f_1 f_3=\xi_2 e,$ &  $f_2 f_3=\xi_3 e.$
\end{tabular}
\end{center}
In this case, from $J(e, e, e, f_1)=0$, $J(e, e, e, f_2)=0$, and $J(e, e, e, f_3)=0$ we get $\mu_i=0, \  i=\overline{1,3}.$ 

If $(\xi_1, \xi_2, \xi_3)\neq(0, 0, 0)$, then by changing $e'=\xi_1e, \ f_3'=\xi_1f_3-\xi_2f_2+\xi_3f_1$ we obtian the Jordan superalgebra:

\begin{center}
  $e'^2=0, \  f_1f_2= e'.$
\end{center}
We denote this superalgebra by $\bf J_1$.

\noindent
\textbf{ii)} Let $L_{e} \simeq \left(\begin{array}{ccc}
\mu_1 & 1 & 0\\
0 & \mu_1 & 0\\
0 & 0 & \mu_3
 \end{array}\right),$ then the rule of multiplication can be written as follows:
 \begin{center}
 \begin{tabular}{llllll}
  $e f_1= \mu_1 f_1 + f_2 ,$ & $e f_2= \mu_1 f_2,$ & $e f_3= \mu_3 f_3,$ & $f_1 f_2=\xi_1 e ,$ & $f_1 f_3=\xi_2 e,$ & $f_2 f_3=\xi_3 e.$
\end{tabular}
\end{center}

In this case, from $J(e, e,e,f_2)=0$ and $J(e, e,e,f_3)=0$ we get $2 \mu_1^3=0, \  2 \mu_3^3=0$, respectively. So $\mu_1=\mu_3=0.$
When $(\xi_1, \xi_2, \xi_3)=(0, 0, 0)$ we get the superalgebra
$$ef_1=f_2,$$
which we denote by $\bf J_2.$
Hence, if $(\xi_1, \xi_2, \xi_3)\neq(0, 0, 0)$, then we can change the basis as follows:

\begin{center}
 \begin{tabular}{ll}
  $f_1'=a_1 f_1+ a_2f_2+a_3f_3,$ & \  $f_1'f_2'=a_1^2 \xi_1 e-a_1a_3\xi_3e=\xi_1'e,$ \\
  $f_2'=a_1f_2 ,$ & \  $f_1'f_3'=a_1b_1\xi_1e+a_1b_2\xi_2e+a_2b_2\xi_3e-a_3b_1\xi_3e=\xi_2'e,$\\
  $f_3'=b_1f_2+b_2f_3,$ & \  $ f_2'f_3'=a_1b_2\xi_3e.$ 
\end{tabular}
\end{center}
We proceed as follows:
\begin{enumerate}
    \item[a)] If $\xi_3=0$ then $\xi_1'=a_1^2\xi_1, \  \xi_2'=a_1b_1\xi_1+a_1b_2\xi_2.$
    \begin{enumerate}
        \item[a.1)] If $\xi_1 \neq 0$ then by choosing $b_1=-\frac{b_2 \xi_2}{\xi_1}, \  a_1=\frac{1}{\xi_1}$ we get $\xi_2'=0$ and $\xi_1'=1$ which gives the superalgebra $\bf J_3$:
        $$e^2=0, \  ef_1'=f_2', \  f_1'f_2'=e.$$
    
        \item[a.2)] If $\xi_1=0$ then $\xi_2'=a_1b_2\xi_2$. In this case by choosing $a_1b_2=\frac{1}{\xi_2}$ we get the superalgebra $\bf J_4$:
        $$e^2=0, \  ef_1'=f_2', \  f_1'f_3'=e.$$
    \end{enumerate}
    \item[b)] If $\xi_3 \neq 0$ then by choosing $a_3=\frac{a_1 \xi_1}{\xi_3}, \  b_2=\frac{1}{a_1 \xi_3}, \  a_2=-\frac{a_1 \xi_2}{\xi_3}$ and taking $b_2=1$ for simplicity we get the superalgebra $\bf J_5$: $e^2=0, \  ef_1'=f_2', \  f_2'f_3'=e.$
\end{enumerate}

\noindent
\textbf{iii)} Let $L_{e} \simeq \left(\begin{array}{ccc}
\mu_1 & 1 & 0\\
0 & \mu_1 & 1\\
0 & 0 & \mu_1
 \end{array}\right),$ then the rule of multiplication can be written as follows:
 \begin{center}
 \begin{tabular}{llllll}
  $e f_1= \mu_1 f_1 + f_2,$ & $e f_2= \mu_1 f_2 + f_3,$ & $e f_3= \mu_1 f_3,$ & $f_1 f_2=\xi_1 e,$ & $f_1 f_3=\xi_2 e,$ & $f_2 f_3=\xi_3 e.$
\end{tabular}
\end{center}
In this case from $J(e, e,e,f_1)=0$ we get $\mu_1=0$. Moreover, from $J(e, e,f_1,f_2)=0$ and $J(e, f_1,f_1,f_2)=0$ we get $\xi_3=0$ and $\xi_2=\xi_1=0$, respectively. As a result, we have the superalgebra $\bf J_6:$
\begin{center}

  $e f_1= f_2,  \  e f_2= f_3.$ 
\end{center}

\noindent
\underline{Let $\jor_0 \cong \mathcal{U}_1$. }
\medskip

\noindent
\textbf{i)} Let $L_{e} \simeq \left(\begin{array}{ccc}
 \mu_1 & 0 & 0\\
 0 & \mu_2 & 0\\
 0 & 0 & \mu_3
 \end{array}\right),$ then the rule of multiplication can be written as follows:
 \begin{center}
 \begin{tabular}{llllll}
  $e f_1= \mu_1 f_1,$ & $e f_2= \mu_2 f_2,$ & $e f_3= \mu_3 f_3,$ & $f_1 f_2=\xi_1 e,$ & $f_1 f_3=\xi_2 e,$ & $f_2 f_3=\xi_3 e.$
\end{tabular}
\end{center}

From $J(e, e,e,f_1)=0,$ $J(e, e,e,f_2)=0,$ and $J(e, e,e,f_3)=0$ we obtain equations below
\begin{center}
\begin{tabular}{cc}
  $(\mu_i-1)\mu_i(2\mu_i-1)=0,$ & $i=\overline{1,3}.$
\end{tabular}
\end{center}
Up to permutation of $f_1, f_2$ and $f_3$ we have ten possibilities:

$(\mu_1,\mu_2,\mu_3) \in \{(0,0,0), (0,0,\frac{1}{2}), (0,0,1), (0,\frac{1}{2}, \frac{1}{2}), (0,\frac{1}{2},1), (0,1,1), (\frac{1}{2}, \frac{1}{2}, \frac{1}{2}), (\frac{1}{2}, \frac{1}{2}, 1), (\frac{1}{2},1,1), (1,1,1) \}.$

\begin{enumerate}
    \item[1.] $(\mu_1,\mu_2,\mu_3)=(0,0,0)$. 
    In this case, we have the following results
    \begin{center}
        \begin{tabular}{ccc}
         $J(e,e,f_1,f_2)=0 \  \Rightarrow \  \xi_1=0,$ &
         $J(e,e,f_1,f_3)=0 \  \Rightarrow \  \xi_2=0,$ &
         $J(e,e,f_2,f_3)=0 \  \Rightarrow \  \xi_3=0.$
        \end{tabular}
    \end{center}
    Hence, the obtained superalgebra is $\bf J_7.$

    \item[2.] $(\mu_1,\mu_2,\mu_3)=(0,0,\frac{1}{2})$.
    In this case, we have the following results
    \begin{center}
        \begin{tabular}{ccc}
         $J(e,e,f_1,f_2)=0 \  \Rightarrow \  \xi_1=0,$ &
         $J(e,e,f_3,f_1)=0 \  \Rightarrow \  \xi_2=0,$ &
         $J(e,e,f_3,f_2)=0 \  \Rightarrow \  \xi_3=0.$
        \end{tabular}
    \end{center}
    Hence, the the obtained superalgebra is $\bf J_8$.
    
    \item[3.] $(\mu_1,\mu_2,\mu_3)=(0,0,1)$.
    In this case, we have the following results
    \begin{center}
        \begin{tabular}{ccc}
         $J(e,e,f_1,f_2)=0 \  \Rightarrow \  \xi_1=0,$ &
         $J(e,e,f_1,f_3)=0 \  \Rightarrow \  \xi_2=0,$ &
         $J(e,e,f_2,f_3)=0 \  \Rightarrow \  \xi_3=0.$ 
        \end{tabular}
    \end{center}
    Hence, the the obtained superalgebra is $\bf J_9$.
    
    \item[4.] $(\mu_1,\mu_2,\mu_3)=(0,\frac{1}{2},\frac{1}{2})$.
    In this case, we have the following results
    \begin{center}
        \begin{tabular}{cc}
         $J(e,e,f_1,f_2)=0 \  \Rightarrow \  \xi_1=0,$ &
         $J(e,e,f_1,f_3)=0 \  \Rightarrow \  \xi_2=0.$ 
        \end{tabular}
    \end{center}
    Hence, the obtained superalgebras are $\bf J_{10}$
    and $\bf J_{11}$.
    
    \item[5.] $(\mu_1,\mu_2,\mu_3)=(0,\frac{1}{2},1)$
    In this case, we have the following results
    \begin{center}
        \begin{tabular}{ccc}
         $J(e,e,f_2,f_1)=0 \  \Rightarrow \  \xi_1=0,$ &
         $J(e,e,f_1,f_3)=0 \  \Rightarrow \  \xi_2=0,$ &
         $J(e,e,f_2,f_3)=0 \  \Rightarrow \  \xi_3=0.$
        \end{tabular}
    \end{center}
    Hence, the obtained superalgebra is $\bf J_{12}.$
    
    \item[6.] $(\mu_1,\mu_2,\mu_3)=(0,1,1)$.
    In this case, we have the following results
    \begin{center}
        \begin{tabular}{cc}
         $J(e,e,f_1,f_2)=0 \  \Rightarrow \  \xi_1=0,$ &
         $J(e,e,f_1,f_3)=0 \  \Rightarrow \  \xi_2=0.$
        \end{tabular}
    \end{center}
    Hence, the obtained superalgebras are $\bf J_{13}$
    and $\bf J_{14}.$
    
    \item[7.] $(\mu_1,\mu_2,\mu_3)=(0,\frac{1}{2},\frac{1}{2})$.
    In this case, we have the following results
    \begin{center}
        \begin{tabular}{c}
         $J(f_1,f_2,e,f_3)=0 \  \Rightarrow \  \xi_1=\xi_2=\xi_3=0.$ 
        \end{tabular}
    \end{center}
    Hence, the obtained superalgebra is $\bf J_{15}.$

    \item[8.] $(\mu_1,\mu_2,\mu_3)=(\frac{1}{2},\frac{1}{2},1)$.
    In this case, we have the following results
    \begin{center}
        \begin{tabular}{ll}
         $J(e,f_1,f_2,f_3)=0 \  \Rightarrow \  \xi_1=\xi_3=0,$ &
         $J(e,e,f_1,f_3)=0 \  \Rightarrow \  \xi_2=0.$
        \end{tabular}
    \end{center}
    Hence, the obtained superalgebra is $\bf J_{16}.$

    \item[9.] $(\mu_1,\mu_2,\mu_3)=(\frac{1}{2},1,1)$.
    In this case, we have the following results
    \begin{center}
        \begin{tabular}{ll}
         $J(e,f_1,f_2,f_3)=0 \  \Rightarrow \  \xi_1=\xi_3=0,$ &
         $J(e,e,f_1,f_3)=0 \  \Rightarrow \  \xi_2=0.$
        \end{tabular}
    \end{center}
    Hence, the obtained superalgebra is $\bf J_{17}.$ 

    \item[10.] $(\mu_1,\mu_2,\mu_3)=(1,1,1)$.
    In this case, the multiplication rules in the obtained superalgebra are $e^2=e,\quad e f_1=f_1, \  e f_2=f_2, \  e f_3=f_3, \  f_1 f_2=\xi_1 e, \  f_1f_3=\xi_2 e, \  f_2 f_3=\xi_3 e$. 
    When $(\xi_1, \xi_2,\xi_3)=(0,0,0)$ we get the superalgebra $\bf J_{18}.$
    
    However, when $(\xi_1, \xi_2,\xi_3)\neq(0,0,0)$ we can assume that $\xi_1\neq 0$ and by changing the basis as follows

    \begin{center}
        \begin{tabular}{lll}
            $f_1'= \frac{1}{\xi_1} f_1 + f_2,$ &
            $f_2'=f_2,$ &
            $f_3'=\xi_1 f_3-\xi_2 f_2 +\xi_3f_1,$
        \end{tabular}
    \end{center}
    we get the superalgebra $\bf J_{19}.$
\end{enumerate}
\noindent
\textbf{ii)} Let $L_{e} \simeq \left(\begin{array}{ccc}
\mu_1 & 1 & 0\\
0 & \mu_1 & 0\\
0 & 0 & \mu_3
 \end{array}\right),$ then the rule of multiplication can be written as follows:
 \begin{center}
 \begin{tabular}{llllll}
  $e f_1= \mu_1 f_1 + f_2,$ & $e f_2= \mu_1 f_2,$ & $e f_3= \mu_3 f_3,$ & $f_1 f_2=\xi_1 e,$ &
   $f_1 f_3=\xi_2 e,$ &
   $f_2 f_3=\xi_3 e.$
\end{tabular}
\end{center}

\noindent
However, the following two equations
    \begin{center}
    \begin{tabular}{ll}
         $J(e,e,e,f_1)=0 \  \Rightarrow \  1-6\mu_1+6\mu_1^2=0,$ &
         $J(e,e,e,f_2)=0 \  \Rightarrow \  (\mu_1-1)\mu_1(2\mu_1-1)=0,$
    \end{tabular}
    \end{center}
which can not be satisfied at the same time, give a contradiction, thereby no superalgebras can be found in this subcase.

\noindent
\textbf{iii)} Let $L_{e} \simeq \left(\begin{array}{ccc}
\mu_1 & 1 & 0\\
0 & \mu_1 & 1\\
0 & 0 & \mu_1
 \end{array}\right),$ then the rule of multiplication can be written as follows:
 \begin{center}
 \begin{tabular}{llllll}
  $e f_1= \mu_1 f_1 + f_2,$ & $e f_2= \mu_1 f_2 + f_3,$ & $e f_3= \mu_1 f_3,$ & $f_1 f_2=\xi_1 e,$ & $f_1 f_3=\xi_2 e,$ & $f_2 f_3=\xi_3 e.$
\end{tabular}
\end{center}

\noindent
However, the following two equations
    \begin{center}
    \begin{tabular}{ll}
         $J(e,e,e,f_2)=0 \  \Rightarrow \  1-6\mu_1+6\mu_1^2=0,$ &
         $J(e,e,e,f_3)=0 \  \Rightarrow \  (\mu_1-1)\mu_1(2\mu_1-1)=0,$
    \end{tabular}
    \end{center}
which can not be satisfied at the same time, give a contradiction, thereby no superalgebras can be found in this subcase.
\end{proof}

\begin{Th}
Up to isomorphism there are 71 Jordan superalgebras of type $(2,2)$, which are presented below with some additional information:

\begin{center}
\renewcommand{\arraystretch}{1.2}
\begin{longtable}{l|c|l|l}
     \textnumero & Orbit & Multiplication rules & Decomposition \\
     \hline
     $\bf \mathcal{J}_{1}$ & 12 & $e_1^2=e_1, \ e_2^2=e_2$ & $\mathcal{U}_1 \oplus \mathcal{U}_1 \oplus \mathcal{S}_1^1 \oplus \mathcal{S}_1^1$\\
     $\bf \mathcal{J}_{2}$ & 13 & $e_1^2=e_1, \ e_2^2=e_2, \ e_2f_2=f_2$ & $\mathcal{U}_1 \oplus \mathcal{S}_2^2 \oplus \mathcal \mathcal{S}_1^1$ \\
     $\bf \mathcal{J}_{3}$ & 13 & $e_1^2=e_1, \ e_2^2=e_2, \ e_2f_2=\frac{1}{2}f_2$  & $\mathcal{U}_1 \oplus \mathcal{S}_1^2 \oplus \mathcal \mathcal{S}_1^1$ \\

     $\bf \mathcal{J}_{4}$ & 12 & $e_1^2=e_1, \ e_2^2=e_2, \ e_2f_1=f_1, \ e_2f_2=f_2$  & $\mathcal{U}_1 \oplus \mathcal{S}_6^3$ \\
     $\bf \mathcal{J}_{5}$ & 13 & $e_1^2=e_1, \ e_2^2=e_2, \ e_2f_1=f_1, \ e_2f_2=\frac{1}{2}f_2$ & $\mathcal{U}_1 \oplus \mathcal{S}_4^3 $\\
     $\bf \mathcal{J}_{6}$ & 13 & $e_1^2=e_1, \ e_2^2=e_2, \ e_2f_1=f_1, \ e_2f_2=f_2, \ f_1f_2=e_2$ & $\mathcal{U}_1 \oplus \mathcal{S}_8^3 $\\

     $\bf \mathcal{J}_{7}$ & 10 & $e_1^2=e_1, \ e_2^2=e_2, \ e_2f_1=\frac{1}{2}f_1, \ e_2f_2=\frac{1}{2}f_2$ & $\mathcal{U}_1 \oplus \mathcal{S}_5^3$\\
     $\bf \mathcal{J}_{8}$ & 11 & $e_1^2=e_1, \ e_2^2=e_2, \ e_2f_1=\frac{1}{2}f_1, \ e_2f_2=\frac{1}{2}f_2$, \ $f_1f_2=e_2$ & $\mathcal{U}_1 \oplus \mathcal{S}_7^3 $\\

     $\bf \mathcal{J}_{9}$ & 13 & $e_1^2=e_1, \ e_2^2=e_2, \ e_1f_2=\frac{1}{2}f_2, \ e_2f_2=\frac{1}{2}f_2$ & $\mathcal{S}_{13}^3 \oplus \mathcal{S}_1^1 $\\
     $\bf \mathcal{J}_{10}$ & 12 & $e_1^2=e_1, \ e_2^2=e_2, \ e_1f_2=\frac{1}{2}f_2, \ e_2f_1=f_1$ & $\mathcal{S}_1^2 \oplus \mathcal{S}_2^2 $\\
     $\bf \mathcal{J}_{11}$ & 13 & $e_1^2=e_1, \ e_2^2=e_2, \ e_1f_2=\frac{1}{2}f_2, \ e_2f_1=f_1, \ e_2f_2=\frac{1}{2}f_2$ & Indecomposable\\
     $\bf \mathcal{J}_{12}$ & 12 & $e_1^2=e_1, \ e_2^2=e_2, \ e_1f_2=\frac{1}{2}f_2, \ e_2f_1=\frac{1}{2}f_1$ & $\mathcal{S}_1^2 \oplus \mathcal{S}_1^2 $\\
     $\bf \mathcal{J}_{13}$ & 12 & $e_1^2=e_1, \ e_2^2=e_2, \ e_1f_2=\frac{1}{2}f_2, \ e_2f_1=\frac{1}{2}f_1, \ e_2f_2=\frac{1}{2}f_2$ & Indecomposable \\

     $\bf \mathcal{J}_{14}$ & 12  & $e_1^2=e_1, \ e_2^2=e_2, \ e_1f_2=f_2, \ e_2f_1=f_1$  & $\mathcal{S}_2^2 \oplus \mathcal{S}_2^2 $\\
     $\bf \mathcal{J}_{15}$ & 10 & $e_1^2=e_1, \ e_2^2=e_2, \ e_1f_1=\frac{1}{2}f_1,  \ e_1f_2=\frac{1}{2}f_2, e_2f_1=\frac{1}{2}f_1, \  e_2f_2=\frac{1}{2}f_2$ & Indecomposable \\
     $\bf \mathcal{J}_{16}^t$ & 12 & $e_1^2=e_1, \ e_2^2=e_2, \ e_1f_1=\frac{1}{2}f_1,  \ e_1f_2=\frac{1}{2}f_2, $ & Indecomposable \\
      &  & $e_2f_1=\frac{1}{2}f_1, \  e_2f_2=\frac{1}{2}f_2, \ f_1f_2=e_1+te_2$ & \\
     \hline
     $\bf \mathcal{J}_{17}$ & 6 & $f_1f_2=e_1$ & $\U_2 \oplus \S_2^3$ \\
     $\bf \mathcal{J}_{18}$ & 7 & $e_2f_1=f_2$ & $\U_2 \oplus \S_3^3$\\
     $\bf \mathcal{J}_{19}$ & 9 & $e_2f_1=f_2, \ f_1f_2=e_1$ & Indecomposable \\
     $\bf \mathcal{J}_{20}$ & 10 & $e_2f_1=f_2, \ f_1f_2=e_2$ & $\U_2 \oplus \S_1^3$ \\
      \hline
     $\bf \mathcal{J}_{21}$ & 7 & $e_1^2=e_1$ & $\mathcal{U}_1 \oplus \mathcal{U}_2 \oplus \mathcal{S}_1^1 \oplus \mathcal{S}_1^1$\\
     $\bf \mathcal{J}_{22}$ & 10 & $e_1^2=e_1, \ f_1f_2=e_2$ & $\mathcal{U}_1 \oplus \mathcal{S}_2^3$ \\
     $\bf \mathcal{J}_{23}$ & 11 & $e_1^2=e_1, \ e_2f_1=f_2$  & $\mathcal{U}_1 \oplus \mathcal{S}_3^3$ \\

     $\bf \mathcal{J}_{24}$ & 12 & $e_1^2=e_1, \ e_2f_1=f_2, \ f_1f_2=e_2$  & $\mathcal{U}_1 \oplus \mathcal{S}_1^3$ \\
     $\bf \mathcal{J}_{25}$ & 10 & $e_1^2=e_1, \ e_1f_2=\frac{1}{2}f_2$ & $\mathcal{U}_2 \oplus \mathcal{S}_1^2 \oplus \S_1^1 $\\
     $\bf \mathcal{J}_{26}$ & 10 & $e_1^2=e_1, \ e_1f_2=f_2$ & $\mathcal{U}_2 \oplus \mathcal{S}_2^2 \oplus \S_1^1 $\\
     $\bf \mathcal{J}_{27}$ & 9 & $e_1^2=e_1, \ e_1f_1=\frac{1}{2}f_1, \ e_1f_2=\frac{1}{2}f_2$ & $\mathcal{U}_2 \oplus \mathcal{S}_5^3 $\\

     $\bf \mathcal{J}_{28}$ & 10 & $e_1^2=e_1, \ e_1f_1=\frac{1}{2}f_1, \ e_1f_2=\frac{1}{2}f_2, \ f_1f_2=e_2$ & Indecomposable\\

     $\bf \mathcal{J}_{29}$ &10 & $e_1^2=e_1, \ e_1f_1=\frac{1}{2}f_1, \ e_1f_2=\frac{1}{2}f_2, \ f_1f_2=e_1$ & $\U_2 \oplus \S_7^3$\\
     $\bf \mathcal{J}_{30}$ & 11 & $e_1^2=e_1, \ e_1f_1=\frac{1}{2}f_1, \ e_1f_2=\frac{1}{2}f_2, \ f_1f_2=e_1+e_2$ & Indecomposable\\
     $\bf \mathcal{J}_{31}$ & 11 & $e_1^2=e_1, \ e_1f_1=\frac{1}{2}f_1, \ e_1f_2=\frac{1}{2}f_2, \ e_2f_1=f_2$ & Indecomposable\\
     $\bf \mathcal{J}_{32}$ & 12 & $e_1^2=e_1, \ e_1f_1=\frac{1}{2}f_1, \ e_1f_2=\frac{1}{2}f_2, \ e_2f_1=f_2, \ f_1f_2=e_2$ & Indecomposable\\
     $\bf \mathcal{J}_{33}$ & 12 & $e_1^2=e_1, \ e_1f_1=\frac{1}{2}f_1, \ e_1f_2=f_2$ & $\U_2 \oplus \S_4^3$ \\

     $\bf \mathcal{J}_{34}$ & 11 & $e_1^2=e_1, \ e_1f_1=f_1, \ e_1f_2=f_2$  & $\U_2 \oplus \S_6^3$\\
     $\bf \mathcal{J}_{35}$ & 12 & $e_1^2=e_1, \ e_1f_1=f_1, \ e_1f_2=f_2, \ f_1f_2=e_1$ & $\U_2 \oplus \S_8^3$\\
     \hline
     $\bf \mathcal{J}_{36}  $ & 11 & $e_1^2=e_1, \ e_1e_2=e_2$ & $\B_1 \oplus \S_1^1 \oplus \S_1^1$\\
     $\bf \mathcal{J}_{37}  $ & 12 & $e_1^2=e_1, \ e_1e_2=e_2, \ e_1f_2=\frac{1}{2}f_2$ & $\S_9^3 \oplus \S_1^1$\\
     $\bf \mathcal{J}_{38}  $ & 11 & $e_1^2=e_1, \ e_1e_2=e_2, \ e_1f_2=f_2$  & $\S_{10}^3 \oplus \S_1^1$\\
     $\bf \mathcal{J}_{39}  $ & 9 & $e_1^2=e_1, \ e_1e_2=e_2, \ e_1f_1=\frac{1}{2}f_1, \ e_1f_2=\frac{1}{2}f_2$ & Indecomposable \\
     $\bf \mathcal{J}_{40}$ & 10 & $e_1^2=e_1, \ e_1e_2=e_2, \ e_1f_1=\frac{1}{2}f_1, \ e_1f_2=\frac{1}{2}f_2, \ f_1f_2=e_2$  & Indecomposable\\
     $\bf \mathcal{J}_{41}  $ & 11 & $e_1^2=e_1, \ e_1e_2=e_2, \ e_1f_1=\frac{1}{2}f_1, \ e_1f_2=\frac{1}{2}f_2, \ e_2f_1=f_2$  & Indecomposable\\
     $\bf \mathcal{J}_{42}$ & 12 & $e_1^2=e_1, \ e_1e_2=e_2, \ e_1f_1=\frac{1}{2}f_1, \ e_1f_2=\frac{1}{2}f_2, \ e_2f_1=f_2, \ f_1f_2=e_2$  & Indecomposable\\
     $\bf \mathcal{J}_{43}  $ & 10 & $e_1^2=e_1, \ e_1e_2=e_2, \ e_1f_1=\frac{1}{2}f_1, \ e_1f_2=f_2$ & Indecomposable\\
     $\bf \mathcal{J}_{44}  $ & 7 & $e_1^2=e_1, \ e_1e_2=e_2, \ e_1f_1=f_1, \ e_1f_2=f_2$ & Indecomposable\\
     $\bf \mathcal{J}_{45}$ & 8 & $e_1^2=e_1, \ e_1e_2=e_2, \ e_1f_1=f_1, \ e_1f_2=f_2, \ f_1f_2=e_2$ & Indecomposable\\
     $\bf \mathcal{J}_{46}$ & 10 & $e_1^2=e_1, \ e_1e_2=e_2, \ e_1f_1=f_1, \ e_1f_2=f_2, \ f_1f_2=e_1$ & Indecomposable\\
     $\bf \mathcal{J}_{47}$ & 11 & $e_1^2=e_1, \ e_1e_2=e_2, \ e_1f_1=f_1, \ e_1f_2=f_2, \ f_1f_2=e_1+e_2$ & Indecomposable\\
     $\bf \mathcal{J}_{48}  $ & 10 & $e_1^2=e_1, \ e_1e_2=e_2, \ e_1f_1=f_1, \ e_1f_2=f_2, \ e_2f_1=f_2$ & Indecomposable\\
     $\bf \mathcal{J}_{49}$ & 11 & $e_1^2=e_1, \ e_1e_2=e_2, \ e_1f_1=f_1, \ e_1f_2=f_2, \ e_2f_1=f_2, \ f_1f_2=e_2$ & Indecomposable\\
     \hline
     $\bf \mathcal{J}_{50}  $ & 10 & $e_1^2=e_1, \ e_1e_2=\frac{1}{2}e_2$ & $\B_2 \oplus \S_1^1 \oplus \S_1^1$\\
     $\bf \mathcal{J}_{51}  $ & 9 & $e_1^2=e_1, \ e_1e_2=\frac{1}{2}e_2, \ e_1f_2=\frac{1}{2}f_2$ & $\S_{11}^3 \oplus \S_1^1$\\
     $\bf \mathcal{J}_{52}$ & 11 & $e_1^2=e_1, \ e_1e_2=\frac{1}{2}e_2, \ e_1f_2=\frac{1}{2}f_2, \ f_1f_2=e_2$  & Indecomposable\\
     $\bf \mathcal{J}_{53}  $ & 11 & $e_1^2=e_1, \ e_1e_2=\frac{1}{2}e_2, \ e_1f_2=\frac{1}{2}f_2, \ e_2f_2=f_1$  & Indecomposable\\
     $\bf \mathcal{J}_{54}$ & 12 & $e_1^2=e_1, \ e_1e_2=\frac{1}{2}e_2, \ e_1f_2=\frac{1}{2}f_2, \ e_2f_2=f_1, \ f_1f_2=e_2$  & Indecomposable\\
     $\bf \mathcal{J}_{55}  $ & 11 & $e_1^2=e_1, \ e_1e_2=\frac{1}{2}e_2, \ e_1f_2=\frac{1}{2}f_2, \ e_2f_1=f_2$  & Indecomposable\\
     $\bf \mathcal{J}_{56}$ & 13 & $e_1^2=e_1, \ e_1e_2=\frac{1}{2}e_2, \ e_1f_2=\frac{1}{2}f_2, \ e_2f_1=f_2, \ f_1f_2=e_2$  & Indecomposable\\
     $\bf \mathcal{J}_{57}  $ & 12 & $e_1^2=e_1, \ e_1e_2=\frac{1}{2}e_2, \ e_1f_2=f_2$ & $\S_{12}^3\oplus \S_1^1$ \\
     $\bf \mathcal{J}_{58}  $ & 4 & $e_1^2=e_1, \ e_1e_2=\frac{1}{2}e_2, \ e_1f_1=\frac{1}{2}f_1, \ e_1f_2=\frac{1}{2}f_2$ & Indecomposable\\
     $\bf \mathcal{J}_{59}  $ & 9 & $e_1^2=e_1, \ e_1e_2=\frac{1}{2}e_2, \ e_1f_1=\frac{1}{2}f_1, \ e_1f_2=f_2$ & Indecomposable\\
     $\bf \mathcal{J}_{60}$ & 10 & $e_1^2=e_1, \ e_1e_2=\frac{1}{2}e_2, \ e_1f_1=\frac{1}{2}f_1, \ e_1f_2=f_2, \ f_1f_2=e_2$ & Indecomposable\\
     $\bf \mathcal{J}_{61}  $ & 10 & $e_1^2=e_1, \ e_1e_2=\frac{1}{2}e_2, \ e_1f_1=\frac{1}{2}f_1, \ e_1f_2=f_2, \ e_2f_2=f_1$ & Indecomposable\\
     $\bf \mathcal{J}_{62}$ & 11 & $e_1^2=e_1, \ e_1e_2=\frac{1}{2}e_2, \ e_1f_1=\frac{1}{2}f_1, \ e_1f_2=f_2, \ e_2f_2=f_1, \ f_1f_2=e_2$ & Indecomposable\\
     $\bf \mathcal{J}_{63}  $ & 11 & $e_1^2=e_1, \ e_1e_2=\frac{1}{2}e_2, \ e_1f_1=\frac{1}{2}f_1, \ e_1f_2=f_2, \ e_2f_1=f_2$ & Indecomposable\\
     $\bf \mathcal{J}_{64}$ & 12 & $e_1^2=e_1, \ e_1e_2=\frac{1}{2}e_2, \ e_1f_1=\frac{1}{2}f_1, \ e_1f_2=f_2, \ e_2f_1=f_2, \ f_1f_2=e_2$ & Indecomposable\\
     $\bf \mathcal{J}_{65}  $ & 10 & $e_1^2=e_1, \ e_1e_2=\frac{1}{2}e_2, \ e_1f_1=f_1, \ e_1f_2=f_2$ & Indecomposable\\
     \hline
     $\bf \mathcal{J}_{66}  $ & 6 & $e_1^2=e_2$ & $\B_3 \oplus \S_1^1 \oplus \S_1^1$\\
     $\bf \mathcal{J}_{67}$ & 7 & $e_1^2=e_2, \ f_1f_2=e_2$ & Indecomposable\\
     $\bf \mathcal{J}_{68}$ & 10 & $e_1^2=e_2, \ f_1f_2=e_1$ & Indecomposable\\
     $\bf \mathcal{J}_{69}  $ & 11 & $e_1^2=e_2, \ e_2f_1=f_2$ & Indecomposable\\
     $\bf \mathcal{J}_{70}  $ & 8 & $e_1^2=e_2, \ e_1f_1=f_2$ & Indecomposable\\
     $\bf \mathcal{J}_{71}$ & 10 & $e_1^2=e_2, \ e_1f_1=f_2, \ f_1f_2=e_2$ & Indecomposable 
\end{longtable}
\end{center}
\end{Th}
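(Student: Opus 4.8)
The plan is to argue exactly as in the proof of the preceding theorem for type $(1,3)$, now running the analysis over every possibility for the even part. Since $\jor_0$ is a two-dimensional Jordan algebra, combining Table~\ref{tb:joral} with the one-dimensional algebras $\U_1$ (defined by $e^{2}=e$) and $\U_2$ (defined by $e^{2}=0$) gives
\[
\jor_0\ \cong\ \U_1\oplus\U_1,\qquad \U_1\oplus\U_2,\qquad \U_2\oplus\U_2,\qquad \B_1,\qquad \B_2,\qquad \B_3,
\]
and this is precisely how the list in the statement is partitioned into six blocks. Fix a basis $e_1,e_2$ of $\jor_0$ realising one of these tables and a basis $f_1,f_2$ of $\jor_1$. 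Super-commutativity forces $f_1^{2}=f_2^{2}=0$ and $f_1f_2=-f_2f_1$, so all remaining structure constants are carried by the two operators $L_{e_1},L_{e_2}\in\mathrm{Mat}_2(\mathbb{C})$ giving the action of $\jor_0$ on $\jor_1$ together with a single vector $f_1f_2\in\jor_0$.

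First I would normalise the action. Evaluating $J(e_i,e_i,e_i,f)=0$ on a generator $e_i$ of $\jor_0$ yields, just as in the $(1,3)$ case, the Peirce relation $(2L_{e_i}-\mathrm{Id})(L_{e_i}-\mathrm{Id})L_{e_i}=0$ when $e_i^{2}=e_i$, and $L_{e_i}^{2}=0$ when $e_i^{2}$ lies in the radical of $\jor_0$; together with the mixed identities $J(e_1,e_1,e_2,f)=0$ and $J(e_1,e_2,e_2,f)=0$ this pins down the pair $(L_{e_1},L_{e_2})$ up to simultaneous conjugation by $\GL(\jor_1)$. As $\jor_1$ is only two-dimensional there are finitely many admissible configurations in each block: $L_{e_i}$ is diagonalisable with eigenvalues in $\{0,\tfrac12,1\}$ whenever $e_i^{2}=e_i$, a Jordan block attached to an idempotent being excluded by the eigenvalue obstruction of the $\jor_0\cong\U_1$ subcases in the $(1,3)$ proof, while for a generator with $e_i^{2}$ in the radical $L_{e_i}$ has square zero, hence is either $0$ or a single nilpotent Jordan block, as in the $\jor_0\cong\U_2$ subcases there.

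Next, for each admissible action I would determine which vectors $f_1f_2\in\jor_0$ satisfy the remaining super-identities — the decisive ones being $J(e_i,e_i,f_1,f_2)=0$, $J(e_i,f_1,f_1,f_2)=0$ and $J(f_1,f_2,e_i,f_j)=0$ — and then normalise the survivors using the residual freedom: changes of basis of $\jor_1$ that preserve the chosen action (rescaling $f_1f_2$ by a determinant, and, when the action is scalar, mixing $f_1,f_2$ freely) together with the automorphisms of $\jor_0$ fixing its table (the swap $e_1\leftrightarrow e_2$ for $\U_1\oplus\U_1$, all of $\GL_2(\mathbb{C})$ for $\U_2\oplus\U_2$, and the evident groups for $\B_1,\B_2,\B_3$). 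In all but one case this collapses the possibilities for $f_1f_2$ to a finite list; the exception is $\jor_0\cong\U_1\oplus\U_1$ with $L_{e_1}=L_{e_2}=\tfrac12\,\mathrm{Id}$, where $f_1f_2=e_1+te_2$ can be reduced no further than the identification $t\leftrightarrow t^{-1}$, producing the one-parameter family $\mathcal{J}_{16}^{t}$.

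Finally I would: (i) recognise each resulting multiplication table as a direct sum of algebras and superalgebras from Tables~\ref{tb:joral}--\ref{tb:supal}, or record it as indecomposable — which for the indecomposable members recovers the list of~\cite{lowcom} — filling in the last column; (ii) compute the orbit dimensions from~(\ref{eq1}), i.e.\ via $\dim\mathfrak{Der}(\jor)$; and (iii) check that the $71$ superalgebras are pairwise non-isomorphic. For (iii) the invariants of Lemma~\ref{lema:inv} are enough: the isomorphism type of $\jor_0$ separates the blocks, and within a block the dimensions $\dim(\jor^{r})_i$, the associated superalgebra $\a(\jor)$, the Burde invariant $c_{i,j}$, and satisfaction of polynomial identities (in particular associativity) distinguish every entry. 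The main obstacle is purely the size of the case analysis: one must enumerate all compatible pairs (action, product) across the six blocks and normalise them coherently, so that no case is missed and no two table entries coincide; none of the individual computations is difficult.
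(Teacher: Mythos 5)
Your proposal follows essentially the same route as the paper: split by the six possible even parts $\U_1\oplus\U_1$, $\U_1\oplus\U_2$, $\U_2\oplus\U_2$, $\B_1$, $\B_2$, $\B_3$, normalise $L_{e_1},L_{e_2}$ on the odd part using the Peirce-type constraints from $J(e_i,e_i,e_i,f_j)=0$ (diagonalisable with eigenvalues $0,\tfrac12,1$ at an idempotent, nilpotent at a radical generator, Jordan blocks at idempotents excluded by the incompatible equations $1-6\mu+6\mu^2=0$ and $\mu(\mu-1)(2\mu-1)=0$), then determine and normalise $f_1f_2$ from the remaining identities, with the scalar action $L_{e_1}=L_{e_2}=\tfrac12\,\mathrm{Id}$ over $\U_1\oplus\U_1$ producing the family $\mathcal{J}_{16}^t$. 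This matches the paper's proof; the remaining work in your sketch is only the (lengthy but routine) execution of the same case analysis.
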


\begin{proof}
\noindent
\underline{Let $\jor_0 \cong \mathcal{U}_1 \oplus \mathcal{U}_1$}.
Here we are looking for Jordan superalgebras such that $\jor=(\mathbb{F}e_1 +\mathbb{F}e_2 )+(\mathbb{F}f_1 +\mathbb{F}f_2)$ with multiplication rules

\begin{center}
    \begin{tabular}{l}
       $e_1^2=e_1,$ \ $e_2^2 =e_2,$ \ $e_1 f_1=\alpha_1 f_1 +\alpha_2 f_2,$ \ $e_1 f_2=\alpha_3 f_1 +\alpha_4 f_2,$ \\
       $e_2 f_1=\beta_1 f_1 +\beta_2 f_2,$ \ $f_1 f_2 = \xi_1 e_1+\xi_2 e_2,$ \ $e_2 f_2=\beta_3 f_1 +\beta_4 f_2.$
    \end{tabular}
\end{center}

For the action of the operator $L_{e_{1}}$ on $\jor_1$ we can write the following matrix
\begin{center}
    \begin{tabular}{c}
        $\begin{pmatrix}
        \alpha_1 & \alpha_2 \\  
        \alpha_3 & \alpha_4 
        \end{pmatrix}.$ 
    \end{tabular}
\end{center}

\noindent
However, it is easy to prove that, by using a simple change of basis, the matrix of $L_{e_{1}}$ have one of the following forms:
\begin{center}
    \begin{tabular}{cc}
        $\begin{pmatrix}
        \mu_1 & 0 \\  
        0 & \mu_2 
        \end{pmatrix},$ & 
        $\begin{pmatrix}
        \mu_1 & 1 \\  
        0 & \mu_1 
        \end{pmatrix}.$ 
    \end{tabular}
\end{center}

\noindent
\textbf{i)} Let $L_{e_{1}} \simeq       \begin{pmatrix}
        \mu_1 & 0 \\  
        0 & \mu_2 
    \end{pmatrix}$ then the rule of multiplication can be written as follows:
\begin{center}
    \begin{tabular}{l}
       $e_1^2=e_1,$ \ $e_2^2 =e_2,$ \ $e_1 f_1=\mu_1 f_1,$ \ $e_1 f_2=\mu_2 f_2,$ \\ 
       $e_2 f_1=\beta_1 f_1 +\beta_2 f_2,$ \
       $f_1 f_2 = \xi_1 e_1+\xi_2 e_2, $ \ $e_2 f_2=\beta_3 f_1 +\beta_4 f_2.$
    \end{tabular}
\end{center}
From $J(e_1, e_1,e_1,f_1)=0,$ $J(e_1, e_1,e_1,f_2)=0,$  we obtain equations below
\begin{center}
    \begin{tabular}{cc}
  $(\mu_1-1)\mu_1(2\mu_1-1)=0,$ & $i=\overline{1,2}.$
\end{tabular}
\end{center}
Up to permutation of $f_1$ and $f_2$ we have six possibilities: 
\begin{center}
$(\mu_1,\mu_2)\in \{(0,0), (0,\frac{1}{2}), (0,1), (\frac{1}{2}, \frac{1}{2}), (\frac{1}{2},1), (1,1)\}.$
\end{center}

\begin{enumerate}
\item[1.] $(\mu_1,\mu_2)=(0,0)$
   In this case, we can consider the action of $e_2.$
    By using a simple change of basis, the matrix of $L_{e_{2}}$ have one of the following forms:
    \begin{center}
        \begin{tabular}{cc}
            $\begin{pmatrix}
            \tau_1 & 0 \\  
            0 & \tau_2 
            \end{pmatrix},$ & 
            $\begin{pmatrix}
            \tau_1 & 1 \\  
            0 & \tau_1 
            \end{pmatrix}.$ 
        \end{tabular}
    \end{center}

    \noindent
    When $L_{e_{2}} \simeq       \begin{pmatrix}
        \tau_1 & 0 \\  
        0 & \tau_2 
    \end{pmatrix}$ we have $\xi_1=0, \ (\tau_1-1)\tau_1(2\tau_1-1)=0, \ (\tau_2-1)\tau_2(2\tau_2-1)=0$ from $J(e_1,e_1,f_1,f_2)=0, \ J(e_2,e_2,e_2,f_1)=0$ and $J(e_2,e_2,e_2,f_2)=0$, respectively.
    
    \begin{itemize}
        \item If $\tau_1=0$ then from 
            \begin{center}
                \begin{tabular}{ll}
                $J(e_2,e_2,f_1,f_2)=0 \  \Rightarrow \  \xi_2(2\tau_2-1)=0,$ &
                $J(e_2,e_2,f_2,f_1)=0 \  \Rightarrow \  \xi_2(\tau_2-1)=0.$  
                \end{tabular}
            \end{center}
            we get $\xi_2=0$ and $\tau_2 \in {0,1,\frac{1}{2}}$, which gives superalgebras $\mathcal{J}_{1}, \mathcal{J}_{2}$ and $\mathcal{J}_{3}.$
        \item If $\tau_1=1$ then we have
        
            \begin{center}
                \begin{tabular}{c}
                $J(e_2,e_2,f_2,f_1)=0 \  \Rightarrow \  \xi_2(\tau_2-1)=0.$ \\ 
                \end{tabular}
            \end{center}
            So, either $\xi_2=0$ and we have  Jordan superalgebras $\mathcal{J}_{4}$ and $\mathcal{J}_{5}$ with $\tau_2 \in \{1,\frac{1}{2}\}$, or $\xi_2 \neq 0$ and $\tau_2=1$ which gives us the Jordan superalgebra $\mathcal{J}_{6}$.
            
        \item If $\tau_1=\frac{1}{2}$ then from 
            \begin{center}
                \begin{tabular}{c}
                $J(e_2,e_2,f_1,f_2)=0 \  \Rightarrow \  \xi_2(2\tau_2-1)=0.$ \\ 
                \end{tabular}
            \end{center}
            So, either $\xi_2=0$ and we have a Jordan superalgebra $\mathcal{J}_{7}$ with $\tau_2=\frac{1}{2}$ ( $\tau_2 \in \{0,1\}$ repeat the previous superalgebras), or $\xi_2 \neq 0$ and $\tau_2=\frac{1}{2}$ which gives us the Jordan superalgebra $\mathcal{J}_{8}$.
    \end{itemize}

    \noindent
    When $L_{e_{2}} \simeq       \begin{pmatrix}
        \tau_1 & 1 \\  
        0 & \tau_1 
    \end{pmatrix}$ we have:
    \begin{center}
        \begin{tabular}{ll}
         $J(e_2,e_2,e_2,f_1)=0 \  \Rightarrow \  (1-6\tau_1+6\tau_1^2)=0,$ &
         $J(e_2,e_2,e_2,f_2)=0 \  \Rightarrow \  (\tau_1-1)\tau_1(2\tau_1-1)=0.$ 
        \end{tabular}
    \end{center}
    This contradiction implies that no superalgebras can be found in this case.
\item[2.] $(\mu_1,\mu_2)=(0,\frac{1}{2})$. In this case we have the following results
    \begin{center}
        
    $J(e_1,e_1,e_2,f_1)=0 \  \Rightarrow \  \beta_2=0,$ \ $J(e_1,e_1,f_2,e_2)=0 \  \Rightarrow \  \beta_3=0,$ \ $J(e_1,e_1,f_2,f_1)=0 \  \Rightarrow \  \xi_1=\xi_2=0,$ \ $J(e_1,e_2,e_2,f_2)=0 \  \Rightarrow \  \beta_4(2\beta_4-1)=0,$ \ $J(e_2,e_2,e_2,f_1)=0 \  \Rightarrow \  (\beta_1-1)\beta_1(2\beta_1-1)=0.$
    
    \end{center}
    Hence, in this case, we obtain 6 Jordan superalgebras with $\beta_4 \in \{0,\frac{1}{2}\}$ and $\beta_1 \in \{0,1,\frac{1}{2}\}.$ Among them we have $\mathcal{J}_{9}, \mathcal{J}_{10}, \mathcal{J}_{11}, \mathcal{J}_{12}, \mathcal{J}_{13}$, while $\beta_1=\beta_4=0$ gives a superalgebra isomorphic to previously obtained one.
    
\item[3.] $(\mu_1,\mu_2)=(0,1)$ In this case we have the following results
    \begin{center}
        
         $J(e_1,e_1,e_2,f_1)=0 \  \Rightarrow \  \beta_2=0,$ \ $J(e_1,e_1,e_2,f_2)=0 \  \Rightarrow \  \beta_3=\beta_4=0,$ \ $J(e_1,e_1,f_1,f_2)=0 \  \Rightarrow \  \xi_1=0,$ \ $J(e_1,e_1,f_2,f_1)=0 \  \Rightarrow \  \xi_2=0,$ \ $J(e_2,e_2,e_2,f_1)=0 \  \Rightarrow \  (\beta_1-1)\beta_1(2\beta_1-1)=0.$
        
    \end{center}
    Hence, in this case, we obtain 3 Jordan superalgebras with $\beta_1\in\{0,1,\frac{1}{2}\}.$ Only $\beta_1=1$ gives a new superalgebra $\mathcal{J}_{14}$.
\item[4.] $(\mu_1,\mu_2)=(\frac{1}{2},\frac{1}{2})$. In this case, we can consider the action of $e_2.$
    By using a simple change of basis, the matrix of $L_{e_{2}}$ has one of the following forms:
    \begin{center}
        \begin{tabular}{cc}
            $\begin{pmatrix}
            \tau_1 & 0 \\  
            0 & \tau_2 
            \end{pmatrix},$ & 
            $\begin{pmatrix}
            \tau_1 & 1 \\  
            0 & \tau_1 
            \end{pmatrix}.$ 
        \end{tabular}
    \end{center}

    \noindent
    When $L_{e_{2}} \simeq       \begin{pmatrix}
        \tau_1 & 0 \\  
        0 & \tau_2 
    \end{pmatrix}$ we have:
    \begin{center}
        \begin{tabular}{cc}
         $J(e_1,e_2,e_2,f_1)=0 \  \Rightarrow \  \tau_1(2\tau_1-1)=0,$ &
         $J(e_1,e_2,e_2,f_4)=0 \  \Rightarrow \  \tau_2(2\tau_2-1)=0.$
        \end{tabular}
    \end{center}
    \begin{itemize}
        \item If  $(\tau_1, \tau_2)=(\frac{1}{2},\frac{1}{2})$ then we get the well-known one parametric family of four-dimensional Jordan superalgebras, which we denoted as $\mathcal{J}_{16}^t.$
        \item If  $(\tau_1, \tau_2)=(0,\frac{1}{2})$ then from $J(e_1, e_2, f_1,            f_2)=0$ we get $\xi_1=\xi_2=0$ which gives a Jordan superalgebra isomorphic to $\mathcal{J}_{13}$.
        \item If $(\tau_1, \tau_2)=(0,0)$ then from $J(e_1, e_2, f_1, f_2)=0$ we get        $\xi_2=0$ and hence the following Jordan superalgebra:
            $e_1^2=e_1, \ e_2^2=e_2, \ e_1f_1=\frac{1}{2}f_1, \ e_1 f_2=\frac{1}{2}f_2, \ f_1f_2=\xi_1 e_1$.
        Though only $\xi_1=0$ gives us a new superalgebra $\mathcal{J}_{15}.$
    \end{itemize}

    \noindent
    When $L_{e_{2}} \simeq       \begin{pmatrix}
        \tau_1 & 1 \\  
        0 & \tau_1 
    \end{pmatrix}$ we have:
    \begin{center}
        \begin{tabular}{ll}
         $J(e_1,e_2,e_2,f_1)=0 \  \Rightarrow \  2\tau_1-\frac{1}{2}=0,$ &
         $J(e_1,e_2,e_2,f_4)=0 \  \Rightarrow \  \tau_1(2\tau_1-1)=0.$
        \end{tabular}
    \end{center}
    This is a contradiction.

\item[5.] $(\mu_1,\mu_2)=(\frac{1}{2},1)$. In this case we have the following results
    \begin{center}
        
         $J(e_1,e_1,e_2,f_2)=0 \  \Rightarrow \  \beta_3=\beta_4=0,$ \
         $J(e_1,e_1,f_1,e_2)=0 \  \Rightarrow \  \beta_2=0,$ \
         $J(e_1,e_1,f_1,f_2)=0 \  \Rightarrow \  \xi_1=\xi_2=0,$ \
         $J(e_1,e_2,e_2,f_3)=0 \  \Rightarrow \  \beta_1(2\beta_1-1)=0.$
        
    \end{center}
    Hence, we have two Jordan superalgebras with $\beta_1 \in \{0,\frac{1}{2}\}$, which are isomorphic to $\mathcal{J}_5$ and $\mathcal{J}_{11}.$

\item[6.] $(\mu_1,\mu_2)=(1,1)$. In this case, we can consider the action of $e_2.$
    By using a simple change of basis, the matrix of $L_{e_{2}}$ has one of the following forms:
    \begin{center}
        \begin{tabular}{cc}
            $\begin{pmatrix}
            \tau_1 & 0 \\  
            0 & \tau_2 
            \end{pmatrix},$ & 
            $\begin{pmatrix}
            \tau_1 & 1 \\  
            0 & \tau_1 
            \end{pmatrix}.$ 
        \end{tabular}
    \end{center}

    \noindent
    When $L_{e_{2}} \simeq       \begin{pmatrix}
        \tau_1 & 0 \\  
        0 & \tau_2 
    \end{pmatrix}$ we have:
    \begin{center}

         $J(e_1,e_1,e_2,f_1)=0 \  \Rightarrow \  \tau_1=0,$ \
         $J(e_1,e_1,e_2,f_2)=0 \  \Rightarrow \  \tau_2=0,$ \
         $J(e_1,e_1,f_1,f_2)=0 \  \Rightarrow \  \xi_2=0.$

    \end{center}
    Hence, we have the following Jordan superalgebra: $e_1^2=e_1, \ e_2^2=e_2, \ e_1f_1=f_1, \ e_1f_2=f_2, \ f_1f_2=\xi_1e_1$ which repeats $\mathcal{J}_2$ and $\mathcal{J}_6.$

    \noindent
    When $L_{e_{2}} \simeq       \begin{pmatrix}
        \tau_1 & 1 \\  
        0 & \tau_1 
    \end{pmatrix}$ we have:
    \begin{center}
        \begin{tabular}{c}
         $J(e_1,e_1,e_2,f_1)=0 \  \Rightarrow \  f_2=0.$
        \end{tabular}
    \end{center}
    Thus, there are no superalgebras in this case.
    
\end{enumerate}

\noindent
\textbf{ii)} Let $L_{e_{1}} \simeq       \begin{pmatrix}
        \mu_1 & 1 \\  
        0 & \mu_1 
    \end{pmatrix}$ then the rule of multiplication can be written as follows:
\begin{center}

       $e_1^2=e_1,$ \ $e_2^2 =e_2,$  \ $e_1 f_1=\mu_1 f_1+f_2,$ \ $e_1 f_2=\mu_1 f_2,$ \\
       $e_2 f_1=\beta_1 f_1 +\beta_2 f_2,$ \ $f_1 f_2 = \xi_1 e_1+\xi_2 e_2 ,$ \ $e_2 f_2=\beta_3 f_1 +\beta_4 f_2.$

\end{center}
From $J(e_1, e_1,e_1,f_1)=0,$ $J(e_1, e_1,e_1,f_2)=0,$  we obtain equations below
\begin{center}
    \begin{tabular}{ll}
  $1-6\mu_1+6\mu_1^2=0,$ & $(\mu_1-1)\mu_1(2\mu_1-1)=0.$ 
\end{tabular}
\end{center}
Evidently, this is a contradiction.

\noindent

\underline{Let $\jor_0 \cong \mathcal{U}_2 \oplus \mathcal{U}_2$.}
Here we are looking for Jordan superalgebras such that $\jor=(\mathbb{F}e_1 +\mathbb{F}e_2 )+(\mathbb{F}f_1+\mathbb{F}f_2)$ with multiplication rules

       $e_1 f_1=\alpha_1 f_1 +\alpha_2 f_2,$ \ $e_1 f_2=\alpha_3 f_1 +\alpha_4 f_2,$ \  $e_2 f_1=\beta_1 f_1 +\beta_2 f_2,$ \
       $f_1 f_2 = \xi_1 e_1+\xi_2 e_2, $ \ $e_2 f_2=\beta_3 f_1 +\beta_4 f_2.$

\noindent
For the action of the operator $L_{e_{1}}$ on $\jor_1$ we can write the following matrix
\begin{center}
    \begin{tabular}{c}
        $\begin{pmatrix}
        \alpha_1 & \alpha_2 \\  
        \alpha_3 & \alpha_4 
        \end{pmatrix}.$ 
    \end{tabular}
\end{center}

\noindent
However, it is easy to prove that, by using a simple change of basis, the matrix of $L_{e_{1}}$ has one of the following forms:
\begin{center}
    \begin{tabular}{cc}
        $\begin{pmatrix}
        \mu_1 & 0 \\  
        0 & \mu_2 
        \end{pmatrix},$ & 
        $\begin{pmatrix}
        \mu_1 & 1 \\  
        0 & \mu_1 
        \end{pmatrix}.$ 
    \end{tabular}
\end{center}

\noindent
\textbf{i)} Let $L_{e_{1}} \simeq       \begin{pmatrix}
        \mu_1 & 0 \\  
        0 & \mu_2 
    \end{pmatrix},$ then the rule of multiplication can be written as follows:
\begin{center}

       $e_1 f_1=\mu_1 f_1,$ \ $e_1 f_2=\mu_2 f_2,$ \ $e_2 f_1=\beta_1 f_1 +\beta_2 f_2,$ \ $f_1 f_2 = \xi_1 e_1+\xi_2 e_2, $ \ $e_2 f_2=\beta_3 f_1 +\beta_4 f_2.$

\end{center}

\noindent
Using Jordan super identity we get the following results:
    \begin{center}
        \begin{tabular}{cc}
         $J(e_1,e_1,e_1,f_1)=0 \  \Rightarrow \  \mu_1=0,$ &
         $J(e_1,e_1,e_1,f_2)=0 \  \Rightarrow \  \mu_2=0.$
        \end{tabular}
    \end{center}
Thus, we can consider the action of $e_2.$
    By using a simple change of basis, the matrix of $L_{e_{2}}$ has one of the following forms:
    \begin{center}
        \begin{tabular}{cc}
            $\begin{pmatrix}
            \tau_1 & 0 \\  
            0 & \tau_2 
            \end{pmatrix},$ & 
            $\begin{pmatrix}
            \tau_1 & 1 \\  
            0 & \tau_1 
            \end{pmatrix}.$ 
        \end{tabular}
    \end{center}

    \noindent
    When $L_{e_{2}} \simeq       \begin{pmatrix}
        \tau_1 & 0 \\  
        0 & \tau_2 
    \end{pmatrix}$ we have:
    \begin{center}
        \begin{tabular}{cc}
         $J(e_2,e_2,e_2,f_1)=0 \  \Rightarrow \  \tau_1=0,$ &
         $J(e_2,e_2,e_2,f_2)=0 \  \Rightarrow \  \tau_2=0.$
        \end{tabular}
    \end{center}

    \noindent
    Hence, we have the Jordan superalgebra: $f_1f_2=\xi_1e_1+\xi_2 e_2.$
    \begin{itemize}
        \item If $(\xi_1, \xi_2)=(0,0)$ the superalgebra is trivial.
        \item If $(\xi_1, \xi_2)\neq(0,0)$ then by changing the basis as follows
        $$e_1'=\xi_1e_1+\xi_2e_2, \ e_2'=\begin{cases}
            e_2, & \text{ if } \xi_1 \neq 0, \\
            e_1, & \text{ if } \xi_1=0. 
        \end{cases}$$
        we get the superalgebra $\mathcal{J}_{17}$.
    \end{itemize}

    \noindent
    When $L_{e_{2}} \simeq       \begin{pmatrix}
        \tau_1 & 1 \\  
        0 & \tau_1 
    \end{pmatrix}$ we have:
    \begin{center}
        \begin{tabular}{c}
         $J(e_2,e_2,e_2,f_1)=0 \  \Rightarrow \  \tau_1=0.$
        \end{tabular}
    \end{center}

    \noindent
    Hence, we have the Jordan superalgebra: $e_2f_1=f_2, \ f_1f_2=\xi_1e_1+\xi_2 e_2.$
    \begin{itemize}
        \item If $(\xi_1, \xi_2)=(0,0)$, we have the superalgebra $\mathcal{J}_{18}.$
        \item If $\xi_2=0, \xi_1 \neq 0$, then by changing $f_1'=\frac{1}{\sqrt{\xi_1}}f_1, \ f_2'=\frac{1}{\sqrt{\xi_1}}f_2$ we get the superalgebra $\mathcal{J}_{19}.$
        \item If $\xi_2 \neq 0$, then changing $f_1'=\frac{1}{\sqrt{\xi_2}}f_1, \ f_2'=\frac{1}{\sqrt{\xi_2}}f_2, \ \frac{\xi_1}{\xi_2}=t$ we have $$e_2f_1'=f_2', \ f_1'f_2'=te_1+e_2.$$
        Further, by changing $e_2'=te_1+e_2$ we obtain the superalgebra
        $\mathcal{J}_{20}.$
    \end{itemize}

\noindent
\underline{Let $\jor_0 \cong \mathcal{U}_1 \oplus \mathcal{U}_2$.}
Here we are looking for Jordan superalgebras such that $\jor=(\mathbb{F}e_1 +\mathbb{F}e_2)+(\mathbb{F}f_1+\mathbb{F}f_2)$ with multiplication rules

\begin{center}

       $e_1^2=e_1,$ \ $e_1 f_1=\alpha_1 f_1 +\alpha_2 f_2,$ \ $e_1 f_2=\alpha_3 f_1 +\alpha_4 f_2,$ \ $e_2 f_1=\beta_1 f_1 +\beta_2 f_2,$ \ $f_1 f_2 = \xi_1 e_1+\xi_2 e_2, $ \ $e_2 f_2=\beta_3 f_1 +\beta_4 f_2.$
    
\end{center}

\noindent
For the action of the operator $L_{e_{1}}$ on $\jor_1$ we can write the following matrix
\begin{center}
    \begin{tabular}{c}
        $\begin{pmatrix}
        \alpha_1 & \alpha_2 \\  
        \alpha_3 & \alpha_4 
        \end{pmatrix}.$ 
    \end{tabular}
\end{center}

\noindent
However, it is easy to prove that, by using a simple change of basis, the matrix of $L_{e_{1}}$ has one of the following forms:
\begin{center}
    \begin{tabular}{cc}
        $\begin{pmatrix}
        \mu_1 & 0 \\  
        0 & \mu_2 
        \end{pmatrix},$ & 
        $\begin{pmatrix}
        \mu_1 & 1 \\  
        0 & \mu_1 
        \end{pmatrix}.$ 
    \end{tabular}
\end{center}

\noindent
\textbf{i)} Let $L_{e_{1}} \simeq       \begin{pmatrix}
        \mu_1 & 0 \\  
        0 & \mu_2 
    \end{pmatrix}$ then the rule of multiplication can be written as follows:
\begin{center}
    \begin{tabular}{l l}
       $e_1^2=e_1,$  & $e_1 f_1=\mu_1 f_1,$ \ $e_1 f_2=\mu_2 f_2,$ \       $e_2 f_1=\beta_1 f_1 +\beta_2 f_2,$ \ $f_1 f_2 = \xi_1 e_1+\xi_2 e_2, $ \ $e_2 f_2=\beta_3 f_1 +\beta_4 f_2.$
    \end{tabular}
\end{center}

From $J(e_1, e_1,e_1,f_1)=0,$ $J(e_1, e_1,e_1,f_2)=0,$  we obtain equations below
\begin{center}
    \begin{tabular}{cc}
  $(\mu_i-1)\mu_i(2\mu_i-1)=0,$ & $i=\overline{1,2}.$
\end{tabular}
\end{center}
Up to permutation of $f_1$ and $f_2$ we have six possibilities: 
\begin{center}
$(\mu_1,\mu_2)\in \{(0,0), (0,\frac{1}{2}), (0,1), (\frac{1}{2}, \frac{1}{2}), (\frac{1}{2},1), (1,1)\}.$
\end{center}

\begin{enumerate}
    \item[1.] $(\mu_1, \mu_2)=(0,0)$. In this case, we can consider the action of $e_2.$
    By using a simple change of basis, the matrix of $L_{e_{2}}$ has one of the following forms:
    \begin{center}
        \begin{tabular}{cc}
            $\begin{pmatrix}
            \tau_1 & 0 \\  
            0 & \tau_2 
            \end{pmatrix},$ & 
            $\begin{pmatrix}
            \tau_1 & 1 \\  
            0 & \tau_1 
            \end{pmatrix}.$ 
        \end{tabular}
    \end{center}
    When $L_{e_{2}} \simeq       \begin{pmatrix}
        \tau_1 & 0 \\  
        0 & \tau_2 
    \end{pmatrix}$ we have:
    \begin{center}

         $J(e_1,e_1,f_1,f_2)=0 \  \Rightarrow \  \xi_1=0,$ \
         $J(e_2,e_2,e_2,f_1)=0 \  \Rightarrow \  \tau_1=0,$ \
         $J(e_2,e_2,e_2,f_2)=0 \  \Rightarrow \  \tau_2=0.$ 

    \end{center}
    So we have the superalgebras $\mathcal{J}_{21}$ and $\mathcal{J}_{22}$.
    When $L_{e_{2}} \simeq       \begin{pmatrix}
        \tau_1 & 1 \\  
        0 & \tau_1 
    \end{pmatrix}$ 
    we have 
    \begin{center}
        \begin{tabular}{ll}
         $J(e_1,e_1,f_1,f_2)=0 \  \Rightarrow \  \xi_1=0,$ &
         $J(e_2,e_2,e_2,f_2)=0 \  \Rightarrow \  \tau_1=0.$ 
         \end{tabular}
    \end{center}
    Which gives us the superalgebras $\mathcal{J}_{23}$ and $\mathcal{J}_{24}$.
    
    \item[2.] $(\mu_1, \mu_2)=(0,\frac{1}{2})$. In this case, we have the following results
    \begin{center}

         $J(e_1,e_1,e_2,f_1)=0 \  \Rightarrow \  \beta_2=0,$ \
         $J(e_1,e_1,f_2,e_2)=0 \  \Rightarrow \  \beta_3=0,$ \
         $J(e_1,e_1,f_2,f_1)=0 \  \Rightarrow \  \xi_1=\xi_2=0,$ \
         $J(e_1,e_2,e_2,f_2)=0 \  \Rightarrow \  \beta_4=0,$ \
         $J(e_2,e_2,e_2,f_1)=0 \  \Rightarrow \  \beta_1=0.$

    \end{center}
    Hence, we obtain the superalgebra $\mathcal{J}_{25}$.

    \item[3.] $(\mu_1, \mu_2)=(0,1)$. In this case, we have the following results
    \begin{center}

         $J(e_1,e_2,e_1,f_2)=0 \  \Rightarrow \  \beta_3=\beta_4=0,$ \
         $J(e_1,f_1,e_1,f_2)=0 \  \Rightarrow \  \xi_1=\xi_2=0,$ \
         $J(e_1,e_1,e_2,f_1)=0 \  \Rightarrow \  \beta_2=0,$ \
         $J(e_2,e_2,e_2,f_1)=0 \  \Rightarrow \  \beta_1=0.$

    \end{center}
    Hence, we get the superalgebra $\mathcal{J}_{26}$.

    \item[4.] $(\mu_1, \mu_2)=(\frac{1}{2},\frac{1}{2})$. In this case, we can consider the action of $e_2.$
    By using a simple change of basis, the matrix of $L_{e_{2}}$ has one of the following forms:
    \begin{center}
        \begin{tabular}{cc}
            $\begin{pmatrix}
            \tau_1 & 0 \\  
            0 & \tau_2 
            \end{pmatrix},$ & 
            $\begin{pmatrix}
            \tau_1 & 1 \\  
            0 & \tau_1 
            \end{pmatrix}.$ 
        \end{tabular}
    \end{center}
    When $L_{e_{2}} \simeq       \begin{pmatrix}
        \tau_1 & 0 \\  
        0 & \tau_2 
    \end{pmatrix}$ we have:
    \begin{center}

         $J(e_1,e_2,e_2,f_1)=0 \  \Rightarrow \  \tau_1=0,$ \
         $J(e_1,e_2,e_2,f_2)=0 \  \Rightarrow \  \tau_2=0.$ 

    \end{center}
    So we have the following superalgebra:
    \begin{center}

       $e_1^2=e_1,$ \ $e_1 f_1=\frac{1}{2}f_1,$ \ $e_1 f_2=\frac{1}{2} f_2,$ \ $f_1 f_2 = \xi_1e_1+\xi_2 e_2.$

    \end{center}
    \begin{itemize}
        \item If $\xi_1=\xi_2=0$ then we have the superalgebra $\mathcal{J}_{27}$.
        \item If $\xi_1=0, \xi_2\neq0$ then by the change $f_1'=\frac{1}{\xi_2} f_1$ we get the superalgebra $\mathcal{J}_{28}.$
        \item If $\xi_1\neq 0, \ \xi_2=0,$ then by changing $f_1'=\frac{1}{\xi_1} f_1$ we get the superalgebra $\mathcal{J}_{29}.$
        \item If  $\xi_1\neq 0, \ \xi_2 \neq 0,$ then by changing $f_1'=\frac{1}{\xi_1} f_1$ and $e_2'=\frac{\xi_2}{\xi_1}e_2$ we obtain the superalgebra $\mathcal{J}_{30}.$
    \end{itemize}
    When $L_{e_{2}} \simeq       \begin{pmatrix}
        \tau_1 & 1 \\  
        0 & \tau_1 
    \end{pmatrix}$ 
    we have 
    \begin{center}
   
         $J(e_1,e_2,e_2,f_1)=0 \  \Rightarrow \  \tau_1=0,$ \
         $J(e_1,e_2,f_1,f_1)=0 \  \Rightarrow \  \xi_1=0.$ 

    \end{center}
    So we have the superalgebras $\mathcal{J}_{31}$ and $\mathcal{J}_{32}$.

    \item[5.] $(\mu_1, \mu_2)=(\frac{1}{2},1)$. In this case, we have the following results
    \begin{center}

         $J(e_1,e_1,e_2,f_2)=0 \  \Rightarrow \  \beta_3=\beta_4=0,$ \
         $J(e_1,e_1,f_1,e_2)=0 \  \Rightarrow \  \beta_2=0,$ \
         $J(e_1,e_1,f_1,f_2)=0 \  \Rightarrow \  \xi_1=\xi_2=0,$\
         $J(e_1,e_2,e_2,f_1)=0 \  \Rightarrow \  \beta_1=0.$

    \end{center}
    Hence, the obtained superalgebra is $\mathcal{J}_{33}.$

    \item[6.] $(\mu_1, \mu_2)=(1,1)$. In this case, we can consider the action of $e_2.$
    By using a simple change of basis, the matrix of $L_{e_{2}}$ has one of the following forms:
    \begin{center}
    \begin{tabular}{cc}
        $\begin{pmatrix}
        \tau_1 & 0 \\  
        0 & \tau_2 
        \end{pmatrix},$ & 
        $\begin{pmatrix}
        \tau_1 & 1 \\  
        0 & \tau_1 
        \end{pmatrix}.$ 
    \end{tabular}
    \end{center}
    When $L_{e_{2}} \simeq       \begin{pmatrix}
        \tau_1 & 0 \\  
        0 & \tau_2 
    \end{pmatrix}$ we have:
    \begin{center}

         $J(e_1,e_1,e_2,f_1)=0 \  \Rightarrow \  \tau_1=0,$ \
         $J(e_1,e_1,e_2,f_2)=0 \  \Rightarrow \  \tau_2=0,$ \
         $J(e_1,e_1,f_1,f_2)=0 \  \Rightarrow \  \xi_2=0.$ 

    \end{center}
    So we have superalgebras $\mathcal{J}_{34}$ and $\mathcal{J}_{35}$.

    \noindent
    When $L_{e_{2}} \simeq       \begin{pmatrix}
        \tau_1 & 1 \\  
        0 & \tau_1 
    \end{pmatrix}$ 
    we have 
    \begin{center}
        \begin{tabular}{l}
         $J(e_1,e_1,e_2,f_1)=0 \  \Rightarrow \  1=0.$  
         \end{tabular}
    \end{center}
    Which is, obviously, a contradiction.
\end{enumerate}

\noindent

\underline{Let $\jor_0 \cong \mathcal{B}_1$.}
Here we are looking for Jordan superalgebras such that $\jor=(\mathbb{F}e_1 +\mathbb{F}e_2)+(\mathbb{F}f_1+\mathbb{F}f_2)$ with multiplication rules

\begin{center}

       $e_1^2=e_1,$ \ $e_1 f_1=\alpha_1 f_1 +\alpha_2 f_2,$ \
       $e_1 e_2 =e_2,$ \ $e_1 f_2=\alpha_3 f_1 +\alpha_4 f_2,$ \
       $e_2 f_1=\beta_1 f_1 +\beta_2 f_2,$ \
       $f_1 f_2 = \xi_1 e_1+\xi_2 e_2, $ \ $e_2 f_2=\beta_3 f_1 +\beta_4 f_2.$

\end{center}

For the action of the operator $L_{e_{1}}$ on $\jor_1$ we can write the following matrix
\begin{center}
    \begin{tabular}{c}
        $\begin{pmatrix}
        \alpha_1 & \alpha_2 \\  
        \alpha_3 & \alpha_4 
        \end{pmatrix}.$ 
    \end{tabular}
\end{center}

\noindent
However, it is easy to prove that, by using a simple change of basis, the matrix of $L_{e_{1}}$ has one of the following forms:
\begin{center}
    \begin{tabular}{cc}
        $\begin{pmatrix}
        \mu_1 & 0 \\  
        0 & \mu_2 
        \end{pmatrix},$ & 
        $\begin{pmatrix}
        \mu_1 & 1 \\  
        0 & \mu_1 
        \end{pmatrix}.$ 
    \end{tabular}
\end{center}

\noindent
\textbf{i)} Let $L_{e_{1}} \simeq       \begin{pmatrix}
        \mu_1 & 0 \\  
        0 & \mu_2 
    \end{pmatrix}$ then the rule of multiplication can be written as follows:
\begin{center}

       $e_1^2=e_1,$ \ $e_1 f_1=\mu_1 f_1,$ \ $e_1 e_2 =e_2,$ \ $e_1 f_2=\mu_2 f_2,$ \ $e_2 f_1=\beta_1 f_1 +\beta_2 f_2,$ \
       $f_1 f_2 = \xi_1 e_1+\xi_2 e_2, $ \ $e_2 f_2=\beta_3 f_1 +\beta_4 f_2.$

\end{center}

From $J(e_1, e_1,e_1,f_1)=0,$ $J(e_1, e_1,e_1,f_2)=0,$  we obtain equations below
\begin{center}
    \begin{tabular}{cc}
  $(\mu_1-1)\mu_1(2\mu_1-1)=0,$ & $i=\overline{1,2}.$
\end{tabular}
\end{center}
Up to permutation of $f_1$ and $f_2$ we have six possibilities:
\begin{center}
$(\mu_1,\mu_2)\in \{(0,0), (0,\frac{1}{2}), (0,1), (\frac{1}{2}, \frac{1}{2}), (\frac{1}{2},1), (1,1)\}.$
\end{center}

\begin{enumerate}
    \item[1.] $(\mu_1,\mu_2)=(0,0)$. In this case, we have the following results
    \begin{center}

         $J(e_1,e_1,e_2,f_1)=0 \  \Rightarrow \  \beta_1=\beta_2=0,$ \
         $J(e_1,e_1,e_2,f_2)=0 \  \Rightarrow \  \beta_3=\beta_4=0,$ \
         $J(e_1,e_1,f_1,f_2)=0 \  \Rightarrow \  \xi_1=\xi_2=0.$

    \end{center}
    Hence, the obtained superalgebra is $\mathcal{J}_{36}$.
    
    \item[2.] $(\mu_1,\mu_2)=(0,\frac{1}{2})$. In this case, we have the following results
    \begin{center}

         $J(e_1,e_1,e_2,f_1)=0 \  \Rightarrow \  \beta_1=\beta_2=0,$ \
         $J(e_1,e_1,f_2,e_2)=0 \  \Rightarrow \  \beta_3=0,$ \
         $J(e_1,e_2,e_2,f_2)=0 \  \Rightarrow \  \beta_4=0,$ \
         $J(e_1,e_1,f_2,f_1)=0 \  \Rightarrow \  \xi_1=\xi_2=0.$ 

    \end{center}
    Hence, the obtained superalgebra is $\mathcal{J}_{37}$.

    \item[3.] $(\mu_1,\mu_2)=(0,1).$ In this case, we have the following results
    \begin{center}

         $J(e_1,e_2,e_1,f_1)=0 \  \Rightarrow \  \beta_1=\beta_2=0,$ \
         $J(e_1,e_1,e_2,f_2)=0 \  \Rightarrow \  \beta_3=0,$ \
         $J(e_1,e_1,f_1,f_2)=0 \  \Rightarrow \  \xi_1=\xi_2=0,$ \
         $J(f_2,e_2,e_2,e_2)=0 \  \Rightarrow \  \beta_4=0.$

    \end{center}
    Hence, the obtained superalgebra is $\mathcal{J}_{38}$.

    \item[4.] $(\mu_1,\mu_2)=(\frac{1}{2},\frac{1}{2})$. In this case, we can consider the action of $e_2.$
    By using a simple change of basis, the matrix of $L_{e_{2}}$ have one of the following forms:
    \begin{center}
        \begin{tabular}{cc}
            $\begin{pmatrix}
            \tau_1 & 0 \\  
            0 & \tau_2 
            \end{pmatrix},$ & 
            $\begin{pmatrix}
            \tau_1 & 1 \\  
            0 & \tau_1 
            \end{pmatrix}.$ 
        \end{tabular}
    \end{center}
    When $L_{e_{2}} \simeq       \begin{pmatrix}
        \tau_1 & 0 \\  
        0 & \tau_2 
    \end{pmatrix}$ we have:
    \begin{center}

         $J(e_1,e_2,e_2,f_1)=0 \  \Rightarrow \  \tau_1=0,$ \
         $J(e_1,e_2,e_2,f_2)=0 \  \Rightarrow \  \tau_2=0,$ \
         $J(e_1,e_2,f_1,f_2)=0 \  \Rightarrow \  \xi_1=0.$ 
    \end{center}
    So we have the superalgebras $\mathcal{J}_{39}$ and $\mathcal{J}_{40}$.
    
    When $L_{e_{2}} \simeq       \begin{pmatrix}
        \tau_1 & 1 \\  
        0 & \tau_1 
    \end{pmatrix}$ 
    we have 
    \begin{center}

         $J(e_1,e_2,e_2,f_1)=0 \  \Rightarrow \  \tau_1=0$, \
         $J(e_1,e_2,f_1,f_2)=0 \  \Rightarrow \  \xi_1=0.$ 

    \end{center}
    which gives us the superalgebras $\mathcal{J}_{41}$ and $\mathcal{J}_{42}$.

    \item[5.] $(\mu_1,\mu_2)=(\frac{1}{2},1)$. In this case, we have the following results
    \begin{center}

         $J(e_1,e_1,e_2,f_2)=0 \  \Rightarrow \  \beta_3=0,$ \
         $J(e_1,e_1,f_1,e_2)=0 \  \Rightarrow \  \beta_2=0,$ \
         $J(e_1,e_1,f_1,f_2)=0 \  \Rightarrow \  \xi_1=\xi_2=0,$\
         $J(e_1,e_2,e_2,f_1)=0 \  \Rightarrow \  \beta_1=0,$\
         $J(e_2,e_2,e_2,f_2)=0 \  \Rightarrow \  \beta_4=0.$

    \end{center}
    Hence, the obtained superalgebra is $\mathcal{J}_{43}.$

    \item[6.] $(\mu_1,\mu_2)=(1,1).$ In this case, we can consider the action of $e_2.$
    By using a simple change of basis, the matrix of $L_{e_{2}}$ have one of the following forms:
    \begin{center}
    \begin{tabular}{cc}
        $\begin{pmatrix}
        \tau_1 & 0 \\  
        0 & \tau_2 
        \end{pmatrix},$ & 
        $\begin{pmatrix}
        \tau_1 & 1 \\  
        0 & \tau_1 
        \end{pmatrix}.$ 
    \end{tabular}
    \end{center}
    When $L_{e_{2}} \simeq       \begin{pmatrix}
        \tau_1 & 0 \\  
        0 & \tau_2 
    \end{pmatrix}$ we have:
    \begin{center}

         $J(e_2,e_2,e_2,f_1)=0 \  \Rightarrow \  \tau_1=0,$ \
         $J(e_2,e_2,e_2,f_2)=0 \  \Rightarrow \  \tau_2=0.$

    \end{center}
    So we have the following superalgebra:
    \begin{center}

       $e_1^2=e_1,$  \ $e_1 f_1=f_1,$ \
       $e_1 e_2 =e_2,$  \ $e_1 f_2=f_2,$ \
       $f_1 f_2 =\xi_1e_1 + \xi_2 e_2, $

    \begin{itemize}
        \item If $(\xi_1,\xi_2)=(0,0)$ we have the superalgebra $\mathcal{J}_{44}$.
        \item If $(\xi_1,\xi_2)\neq(0,0)$ 
        \begin{enumerate}
            \item When $\xi_1=0,$ by changing $f_1'=\frac{1}{\xi_2}f_1$ we get the superalgebra $\mathcal{J}_{45}$.
            \item When $\xi_1\neq 0,$ by changing $f_1'=\frac{1}{\xi_1}f_1$ and denoting $\frac{\xi_2}{\xi_1}=t$ we can write $$e_1^2=e_1, \ e_1e_2=e_2, \ e_1f_1=f_1, \ e_1f_2=f_2, \ f_1f_2=e_1+te_2.$$
            In this case, we get $\mathcal{J}_{46}$ when $t=0$ and $\mathcal{J}_{47}$ when $t\neq 0$. 
        \end{enumerate}
    \end{itemize}
    \end{center}
    When $L_{e_{2}} \simeq       \begin{pmatrix}
        \tau_1 & 1 \\  
        0 & \tau_1 
    \end{pmatrix}$ 
    we have 
    \begin{center}

         $J(e_2,e_2,e_2,f_2)=0 \  \Rightarrow \  \tau_1=0,$ \
         $J(e_2,e_2,f_1,f_1)=0 \  \Rightarrow \  \xi_1=0.$ 

    \end{center}
    So we have the superalgebras $\mathcal{J}_{48}$ and $\mathcal{J}_{49}$.
\end{enumerate}

\noindent
\textbf{ii)} Let $L_{e_{1}} \simeq       \begin{pmatrix}
        \mu_1 & 1 \\  
        0 & \mu_1 
    \end{pmatrix}$ then the rule of multiplication can be written as follows:
$$e_1^2=e_1, \ e_1 e_2 =e_2, \ e_1 f_1=\mu_1 f_1+f_2, \ e_1 f_2=\mu_1 f_2,$$
$$e_2 f_1=\beta_1 f_1 +\beta_2 f_2, \ f_1 f_2 = \xi_1 e_1+\xi_2 e_2, \ e_2 f_2=\beta_3 f_1 +\beta_4 f_2.$$

However, from
    \begin{center}

         $J(e_1,e_1,e_1,f_1)=0 \  \Rightarrow \  1-6\mu_1+6\mu_1^2=0,$ \
         $J(e_1,e_1,e_1,f_2)=0 \  \Rightarrow \  (\mu_1-1)\mu_1(2\mu_1-1)=0.$ 

    \end{center}
    we get a contradiction.

\noindent

\underline{Let $\jor_0 \cong \mathcal{B}_2$.}
Here we are looking for Jordan superalgebras such that $\jor=(\mathbb{F}e_1 +\mathbb{F}e_2)+(\mathbb{F}f_1+\mathbb{F}f_2)$ with multiplication rules
$$e_1^2=e_1, \ e_1 f_1=\alpha_1 f_1 +\alpha_2 f_2, \ e_1 e_2 =\frac{1}{2}e_2, \ e_1 f_2=\alpha_3 f_1 +\alpha_4 f_2,$$ 
$$e_2 f_1=\beta_1 f_1 +\beta_2 f_2, \ f_1 f_2 = \xi_1 e_1+\xi_2 e_2,  \ e_2 f_2=\beta_3 f_1 +\beta_4 f_2.$$

\noindent
For the action of the operator $L_{e_{1}}$ on $\jor_1$ we can write the following matrix
\begin{center}
    \begin{tabular}{c}
        $\begin{pmatrix}
        \alpha_1 & \alpha_2 \\  
        \alpha_3 & \alpha_4 
        \end{pmatrix}.$ 
    \end{tabular}
\end{center}

\noindent
However, it is easy to prove that, by using a simple change of basis, the matrix of $L_{e_{1}}$ has one of the following forms:
\begin{center}
    \begin{tabular}{cc}
        $\begin{pmatrix}
        \mu_1 & 0 \\  
        0 & \mu_2 
        \end{pmatrix},$ & 
        $\begin{pmatrix}
        \mu_1 & 1 \\  
        0 & \mu_1 
        \end{pmatrix}.$ 
    \end{tabular}
\end{center}

\noindent
\textbf{i)} Let $L_{e_{1}} \simeq       \begin{pmatrix}
        \mu_1 & 0 \\  
        0 & \mu_2 
    \end{pmatrix}$ then the rule of multiplication can be written as follows:
$$e_1^2=e_1, \ e_1 f_1=\mu_1 f_1, \ e_1 e_2 =\frac{1}{2}e_2, \ e_1 f_2=\mu_2 f_2,$$
$$e_2 f_1=\beta_1 f_1 +\beta_2 f_2, \ f_1 f_2 = \xi_1 e_1+\xi_2 e_2,  \ e_2 f_2=\beta_3 f_1 +\beta_4 f_2.$$

From $J(e_1, e_1,e_1,f_1)=0,$ $J(e_1, e_1,e_1,f_2)=0,$  we obtain equations below
\begin{center}
    \begin{tabular}{cc}
  $(\mu_1-1)\mu_1(2\mu_1-1)=0,$ & $i=\overline{1,2}.$
\end{tabular}
\end{center}
Up to permutation of $f_1$ and $f_2$ we have six possibilities:
\begin{center}
$(\mu_1,\mu_2)\in \{(0,0), (0,\frac{1}{2}), (0,1), (\frac{1}{2}, \frac{1}{2}), (\frac{1}{2},1), (1,1)\}.$
\end{center}

\begin{enumerate}
    \item[1.] $(\mu_1, \mu_2)=(0,0)$. In this case, we have the following results
    \begin{center}

         $J(e_1,e_1,e_2,f_1)=0 \  \Rightarrow \  \beta_1=\beta_2=0,$ \
         $J(e_1,e_1,e_2,f_2)=0 \  \Rightarrow \  \beta_3=\beta_4=0,$ \
         $J(e_1,e_1,f_1,f_2)=0 \  \Rightarrow \  \xi_1=\xi_2=0.$

    \end{center}
    Hence, the obtained superalgebra is $\mathcal{J}_{50}$.
    
    \item[2.] $(\mu_1, \mu_2)=(0,\frac{1}{2})$. In this case, we have the following results
    \begin{center}

         $J(e_1,e_1,e_2,f_1)=0 \  \Rightarrow \  \beta_1=0,$ \
         $J(e_1,e_1,f_2,f_1)=0 \  \Rightarrow \  \xi_1=0,$ \
         $J(e_1,e_2,e_1,f_2)=0 \  \Rightarrow \  \beta_4=0,$ \
         $J(e_1,e_2,e_2,f_1)=0 \  \Rightarrow \  \beta_2 \beta_3=0.$

    \end{center}
    We consider the following subcases:
    \begin{enumerate}
        \item[(a)] $\beta_2=0$. If $\beta_3=0$ then we take the superalgebra $\mathcal{J}_{51}$ when $\xi_2=0$, and when $\xi_2 \neq 0$ we can change $f_1'=\frac{1}{\xi_2} f_1$  and thereby obtain superalgebra $\mathcal{J}_{52}$.

        If $\beta_3 \neq 0$, we get the superalgebra $\mathcal{J}_{53}$ when $\xi_2=0$, and when $\xi_2 \neq 0$ then we change the basis by taking $f_1'=\sqrt{\frac{\beta_3}{\xi_2}}f_1, \ f_2'=\frac{1}{\sqrt{\beta_3 \xi_2}}f_2$, which gives us the superalgebra $\mathcal{J}_{54}$.

        \item[(b)] $\beta_3=0, \ \beta_2 \neq 0$. In this subcase we obtain superalgebras $\mathcal{J}_{55}$ (when $\xi_2=0$) and $\mathcal{J}_{56}$ (when $\xi_2 \neq 0$).
    \end{enumerate}
    
    \item[3.] $(\mu_1, \mu_2)=(0,1)$.
    In this case, we have the following results
    \begin{center}

         $J(e_1,e_1,e_2,f_1)=0 \  \Rightarrow \  \beta_1=\beta_2=0,$ \
         $J(e_1,e_1,e_2,f_2)=0 \  \Rightarrow \  \beta_3=\beta_4=0,$ \
         $J(e_1,e_1,f_1,f_2)=0 \  \Rightarrow \  \xi_1=\xi_2=0.$

    \end{center}
   This gives us the Jordan superalgebra $\mathcal{J}_{57}.$

    \item[4.] $(\mu_1, \mu_2)=(\frac{1}{2},\frac{1}{2})$. In this case, we have the following results
    \begin{center}

         $J(e_1,e_2,e_1,f_1)=0 \  \Rightarrow \  \beta_1=\beta_2=0,$ \
         $J(e_1,e_2,e_1,f_2)=0 \  \Rightarrow \  \beta_3=\beta_4=0,$ \
         $J(e_1,f_1,e_1,f_2)=0 \  \Rightarrow \  \xi_2=0,$ \
         $J(e_2,f_1,e_1,f_2)=0 \  \Rightarrow \  \xi_1=0.$

    \end{center}
    This gives us the Jordan superalgebra $\mathcal{J}_{58}.$

    \item[5.] $(\mu_1, \mu_2)=(\frac{1}{2},1)$. In this case, we have the following results
    \begin{center}

         $J(e_1,e_1,e_2,f_2)=0 \  \Rightarrow \  \beta_4=0,$ \
         $J(e_1,e_1,f_1,f_2)=0 \  \Rightarrow \  \xi_1=0,$ \
         $J(e_1,e_2,e_1,f_1)=0 \  \Rightarrow \  \beta_1=0,$ \
         $J(e_1,e_2,e_2,f_2)=0 \  \Rightarrow \  \beta_2 \beta_3=0.$

    \end{center}
    This case, being similar to the second one, gives us the Jordan superalgebras $\mathcal{J}_{59}-\mathcal{J}_{64}$.

    \item[6.] $(\mu_1, \mu_2)=(1,1)$.
    In this case, we have the following results
    \begin{center}

         $J(e_1,e_1,e_2,f_1)=0 \  \Rightarrow \  \beta_1=\beta_2=0,$ \
         $J(e_1,e_1,e_2,f_2)=0 \  \Rightarrow \  \beta_3=\beta_4=0,$ \
         $J(e_1,e_1,f_1,f_2)=0 \  \Rightarrow \  \xi_2=0,$ \
         $J(e_2,f_1,e_1,f_2)=0 \  \Rightarrow \  \xi_1=0.$

    \end{center}
    Here we obtain the Jordan superalgebra $\mathcal{J}_{65}$.
\end{enumerate}
\noindent
\textbf{ii)} Let $L_{e_{1}} \simeq       \begin{pmatrix}
        \mu_1 & 1 \\  
        0 & \mu_1 
    \end{pmatrix}$ then the rule of multiplication can be written as follows:
$$e_1^2=e_1, \ e_1 f_1=\mu_1 f_1+f_2, \ e_1 e_2 =\frac{1}{2}e_2, \ e_1 f_2=\mu_1 f_2, \ e_2^2 =0,$$  
$$e_2 f_1=\beta_1 f_1 +\beta_2 f_2, \ f_1 f_2 = \xi_1 e_1+\xi_2 e_2,  \ e_2 f_2=\beta_3 f_1 +\beta_4 f_2.$$

However, from the following 
    \begin{center}

         $J(e_1,e_1,e_1,f_1)=0 \  \Rightarrow \  1-6\mu_1+6\mu_1^2=0,$ \
         $J(e_1,e_1,e_1,f_2)=0 \  \Rightarrow \  (\mu_1-1)\mu_1(2\mu_1-1)=0,$ 

    \end{center}
    we get a contradiction.

\bigskip
\noindent
\underline{Let $\jor_0 \cong \mathcal{B}_3$.}
Here we are looking for Jordan superalgebras such that $\jor=(\mathbb{F}e_1 +\mathbb{F}e_2)+(\mathbb{F}f_1+\mathbb{F}f_2)$ with multiplication rules
$$e_1^2=e_2, \ e_1 f_1=\alpha_1 f_1 +\alpha_2 f_2, \ e_1 f_2=\alpha_3 f_1 +\alpha_4 f_2,$$
$$e_2 f_1=\beta_1 f_1 +\beta_2 f_2, \ f_1 f_2 = \xi_1 e_1+\xi_2 e_2, \ e_2 f_2=\beta_3 f_1 +\beta_4 f_2.$$

For the action of the operator $L_{e_{1}}$ on $\jor_1$ we can write the following matrix
\begin{center}
    \begin{tabular}{c}
        $\begin{pmatrix}
        \alpha_1 & \alpha_2 \\  
        \alpha_3 & \alpha_4 
        \end{pmatrix}.$ 
    \end{tabular}
\end{center}

\noindent
However, it is easy to prove that, by using a simple change of basis, the matrix of $L_{e_{1}}$ has one of the following forms:
\begin{center}
    \begin{tabular}{cc}
        $\begin{pmatrix}
        \mu_1 & 0 \\  
        0 & \mu_2 
        \end{pmatrix},$ & 
        $\begin{pmatrix}
        \mu_1 & 1 \\  
        0 & \mu_1 
        \end{pmatrix}.$ 
    \end{tabular}
\end{center}

\noindent
\textbf{i)} Let $L_{e_{1}} \simeq       \begin{pmatrix}
        \mu_1 & 0 \\  
        0 & \mu_2 
    \end{pmatrix}$ then the rule of multiplication can be written as follows:
\begin{center}

       $e_1^2=e_2,$  \ $e_1 f_1=\mu_1 f_1,$ \ $e_1 f_2=\mu_2 f_2,$ \ $e_2 f_1=\beta_1 f_1 +\beta_2 f_2,$ \ $f_1 f_2 = \xi_1 e_1+\xi_2 e_2, $ \ $e_2 f_2=\beta_3 f_1 +\beta_4 f_2.$

\end{center}

Let's assume that $\mu_1 \neq 0$. Then, from
\begin{center}

         $J(e_1,e_1,e_1,f_1)=0 \  \Rightarrow \  \beta_2=0,$ \
         $J(e_2,e_2,e_2,f_1)=0 \  \Rightarrow \  \beta_1=0,$ \
         $J(e_1,e_1,e_1,f_1)=0 \  \Rightarrow \  \mu_1=0,$

\end{center}
results, we get a contradiction. So $\mu_1=0.$

Further, let's assume that $\mu_2 \neq 0$. Then, from
\begin{center}
        $J(e_1,e_1,e_1,f_2)=0 \  \Rightarrow \  \beta_3=0,$ \
         $J(e_1,e_1,e_2,f_1)=0 \  \Rightarrow \  \beta_1=0,$
\end{center}
\begin{center}
         $J(e_2,e_2,e_2,f_2)=0 \  \Rightarrow \  \beta_4=0,$ \
         $J(e_1,e_1,e_1,f_2)=0 \  \Rightarrow \  \mu_2=0.$
\end{center}
results we get a contradiction again. So $\mu_2=0$.

\bigskip
\noindent
With obtained results we can consider the action of the operator $L_{e_{2}}$ on $\jor_1$ which can be written in one of the following forms:
\begin{center}
    \begin{tabular}{cc}
        $\begin{pmatrix}
        \tau_1 & 0 \\  
        0 & \tau_2 
        \end{pmatrix},$ & 
        $\begin{pmatrix}
        \tau_1 & 1 \\  
        0 & \tau_1 
        \end{pmatrix}.$ 
    \end{tabular}
\end{center}

\noindent
\textbf{i.1)} $L_{e_2} \simeq \begin{pmatrix}
        \tau_1 & 0 \\  
        0 & \tau_2 
        \end{pmatrix}$ gives the following results:
\begin{center}

         $J(e_1,e_1,e_2,f_1)=0 \  \Rightarrow \  \tau_1=0,$ \
         $J(e_1,e_1,e_2,f_2)=0 \  \Rightarrow \  \tau_2=0,$ 

\end{center}
by which we obtain the following superalgebra:
\begin{center}
    \begin{tabular}{l l}
       $e_1^2=e_2,$  & $f_1 f_2 = \xi_1 e_1+\xi_2 e_2. $
    \end{tabular}
\end{center}
\begin{itemize}
    \item[a)] If $\xi_1=0, \xi_2=0$ we have the superalgebra $\mathcal{J}_{66}.$
    \item[b)] If $\xi_1=0, \xi_2\neq 0,$ then by changing $f_1'=\frac{1}{\xi_2}f_1$ we get the superalgebra $\mathcal{J}_{67}.$
    \item[c)] If $\xi_1 \neq 0,$ then by changing $f_1'=\frac{1}{\xi_1}f_1$ and $e_1'=e_1+\frac{\xi_2}{\xi_1}e_2$ we obtain the superalgebra $\mathcal{J}_{68}.$
\end{itemize}

\noindent
\textbf{i.2)} $L_{e_2} \simeq \begin{pmatrix}
        \tau_1 & 1 \\  
        0 & \tau_1 
        \end{pmatrix}$ gives the following results:
\begin{center}

         $J(e_1,e_1,e_2,f_1)=0 \  \Rightarrow \  \tau_1=0,$ \
         $J(e_1,e_1,f_1,f_1)=0 \  \Rightarrow \  \xi_1=\xi_2=0,$ 

\end{center}
by which we obtain $\mathcal{J}_{69}$.

\noindent
\textbf{ii)} Let $L_{e_{1}} \simeq       \begin{pmatrix}
        \mu_1 & 1 \\  
        0 & \mu_1 
    \end{pmatrix}$ then the rule of multiplication can be written as follows:
\begin{center}

       $e_1^2=e_2,$ \ $e_1 f_1=\mu_1 f_1+f_2,$ \ $e_1 f_2=\mu_1 f_2,$ \ $e_2 f_1=\beta_1 f_1 +\beta_2 f_2,$ \ $f_1 f_2 = \xi_1 e_1+\xi_2 e_2, $ \ $e_2 f_2=\beta_3 f_1 +\beta_4 f_2.$

\end{center}

\noindent
Then we have the following results:
\begin{center}

         $J(e_1,e_1,f_1,e_1)=0 \  \Rightarrow \  \beta_3=0,$ \
         $J(e_2,e_2,e_2,f_2)=0 \  \Rightarrow \  \beta_4=0,$ \
         $J(e_1,e_1,e_1,f_2)=0 \  \Rightarrow \  \mu_1=0,$ \
         $J(e_1,e_1,f_1,e_1)=0 \  \Rightarrow \  \beta_1=0,$ \
         $J(e_1,e_1,f_1,f_1)=0 \  \Rightarrow \  \beta_2 \xi_1=0, \ \beta_2 \xi_2 = 2\xi_1.$

\end{center}
If $\beta_2=0$ then  we have $\xi_1=0$ and get the superalgebras $\mathcal{J}_{70}$ and $\mathcal{J}_{71}$.

\noindent
If $\beta_2 \neq 0$ then we have $\xi_1=\xi_2=0$. Then by changing the basis as $e_1'=e_1-\frac{1}{\beta_2}e_2, \ f_2'=\beta_2 f_2$ we obtain the superalgebra $$e_1'^2=e_2, \ e_2f_1=f_2'.$$ However, this superalgebra is isomorphic to $ \mathcal{J}_{69}$. 
\end{proof}

\begin{Th}
Up to isomorphism there are 59 Jordan superalgebras of type $(3,1)$, which are presented below with some additional information:

\renewcommand{\arraystretch}{1.2}
\begin{longtable}{l|c|l|l}
     \textnumero & Orbit & Multiplication rules & Decomposition \\
     \hline
     $\bf \mathfrak{J}_{1}$ & 15 & $e_1^2=e_1, \ e_2^2=e_2, \ e_3^2=e_3$ & $\U_1 \oplus \U_1 \oplus \U_1 \oplus \mathcal{S}_1^1$ \\
     $\bf \mathfrak{J}_{2}$ & 14 & $e_1^2=e_1, \ e_2^2=e_2, \ e_3^2=e_3, \ e_1f=f$ & $\S_2^2 \oplus \U_1 \oplus \U_1$\\
     $\bf \mathfrak{J}_{3}$ & 14 & $e_1^2=e_1, \ e_2^2=e_2, \ e_3^2=e_3, \ e_1f=\frac{1}{2}f$  & $\S_1^2 \oplus \U_1 \oplus \U_1$ \\
     $\bf \mathfrak{J}_{4}$ & 14 & $e_1^2=e_1, \ e_2^2=e_2, \ e_3^2=e_3, \ e_1f=\frac{1}{2}f, \ e_2f=\frac{1}{2}f$ & $\S_{13}^3 \oplus \U_1$ \\
     \hline
     $\bf \mathfrak{J}_{5}$ & 12 & $e_1^2=e_1, \ e_2^2=e_2$ & $\U_1 \oplus \U_1 \oplus \U_2 \oplus \mathcal{S}_1^1$ \\
     $\bf \mathfrak{J}_{6}$ & 13 & $e_1^2=e_1, \ e_2^2=e_2, \ e_1f=f$ & $\S_2^2 \oplus \U_1 \oplus \S_1^1$\\
     $\bf \mathfrak{J}_{7}$ & 13 & $e_1^2=e_1, \ e_2^2=e_2, \ e_1f=\frac{1}{2}f$  & $\S_1^2 \oplus \U_1 \oplus \S_1^1$ \\
     $\bf \mathfrak{J}_{8}$ & 13 & $e_1^2=e_1, \ e_2^2=e_2, \ e_1f=\frac{1}{2}f, \ e_2f=\frac{1}{2}f$ & $\S_{13}^3 \oplus \S_1^1$ \\
      \hline
     $\bf \mathfrak{J}_{9}$ & 7 & $e_1^2=e_1$ & $\U_1 \oplus \U_2 \oplus \U_2 \oplus \mathcal{S}_1^1$ \\
     $\bf \mathfrak{J}_{10}$ & 10 & $e_1^2=e_1, \ e_1f=f$ & $\S_2^2 \oplus \U_2 \oplus \S_1^1$\\
     $\bf \mathfrak{J}_{11}$ & 11 & $e_1^2=e_1, \ e_1f=\frac{1}{2}f$  & $\S_1^2 \oplus \U_2 \oplus \S_1^1$ \\
     \hline
     $\bf \mathfrak{J}_{12}$ & 14 & $e_1^2=e_1, \ e_1e_2=e_2, \ e_3^2=e_3$ & $\B_1 \oplus \U_1 \oplus \S_1^1$ \\
     $\bf \mathfrak{J}_{13}$ & 13 & $e_1^2=e_1, \ e_1e_2=e_2, \ e_3^2=e_3, \ e_3f=f$ & $\B_1 \oplus \S_2^2$\\
     $\bf \mathfrak{J}_{14}$ & 13& $e_1^2=e_1, \ e_1e_2=e_2, \ e_3^2=e_3, \ e_3f=\frac{1}{2}f$  & $\B_1 \oplus \S_2^1$ \\
     $\bf \mathfrak{J}_{15}$ & 12 & $e_1^2=e_1, \ e_1e_2=e_2, \ e_3^2=e_3, \ e_1f=f$  & $\S_{10}^3 \oplus \U_1$ \\
     $\bf \mathfrak{J}_{16}$ & 13 & $e_1^2=e_1, \ e_1e_2=e_2, \ e_3^2=e_3, \ e_1f=\frac{1}{2}f$  & $\S_{9}^3 \oplus \U_1$ \\
     $\bf \mathfrak{J}_{17}$ & 13 & $e_1^2=e_1, \ e_1e_2=e_2, \ e_3^2=e_3, \ e_1f=\frac{1}{2}f, \ e_3f=\frac{1}{2}f$ & Indecomposable \\
      \hline
     $\bf \mathfrak{J}_{18}$ & 11 & $e_1^2=e_1, \ e_1e_2=e_2$ & $\B_1 \oplus \U_2 \oplus \S_1^1$ \\
     $\bf \mathfrak{J}_{19}$ & 11 & $e_1^2=e_1, \ e_1e_2=e_2, \ e_1f=f$ & $\S_{10}^3 \oplus \U_2^1$\\
     $\bf \mathfrak{J}_{20}$ & 12 & $e_1^2=e_1, \ e_1e_2=e_2, \ e_1f=\frac{1}{2}f$  & $\S_9^3 \oplus \U_2^1$ \\
     \hline
     $\bf \mathfrak{J}_{21}$ & 13 & $e_1^2=e_1, \ e_1e_2=\frac{1}{2}e_2, \ e_3^2=e_3$ & $\B_2 \oplus \U_1 \oplus \S_1^1$ \\
     $\bf \mathfrak{J}_{22}$ & 12 & $e_1^2=e_1, \ e_1e_2=\frac{1}{2}e_2, \ e_3^2=e_3, \ e_3f=f$ & $\B_2 \oplus \S_2^2$\\
     $\bf \mathfrak{J}_{23}$ & 12 & $e_1^2=e_1, \ e_1e_2=\frac{1}{2}e_2, \ e_3^2=e_3, \ e_3f=\frac{1}{2}f$ & $\B_2 \oplus \S_1^1$\\
     $\bf \mathfrak{J}_{24}$ & 13 & $e_1^2=e_1, \ e_1e_2=\frac{1}{2}e_2, \ e_3^2=e_3, \ e_1f=f$ & $\S_{12}^3 \oplus \U_1$\\
     $\bf \mathfrak{J}_{25}$ & 10 & $e_1^2=e_1, \ e_1e_2=\frac{1}{2}e_2, \ e_3^2=e_3, \ e_1f=\frac{1}{2}f$ & $\S_{11}^3 \oplus \U_1$\\
     $\bf \mathfrak{J}_{26}$ & 12 & $e_1^2=e_1, \ e_1e_2=\frac{1}{2}e_2, \ e_3^2=e_3, \ e_1f=\frac{1}{2}f, \ e_3f=\frac{1}{2}f$  & Indecomposable\\
     \hline
     $\bf \mathfrak{J}_{27}$ & 10 & $e_1^2=e_1, \ e_1e_2=\frac{1}{2}e_2$ & $\B_2 \oplus \U_2 \oplus \S_1^1$ \\
     $\bf \mathfrak{J}_{28}$ & 12 & $e_1^2=e_1, \ e_1e_2=\frac{1}{2}e_2, \ e_1f=f$ & $\S_{12}^3 \oplus \U_2$\\
     $\bf \mathfrak{J}_{29}$ & 9 & $e_1^2=e_1, \ e_1e_2=\frac{1}{2}e_2, \ e_1f=\frac{1}{2}f$ & $\S_{11}^3 \oplus \U_2^1$\\
     \hline
     $\bf \mathfrak{J}_{30}$ & 11 & $e_1^2=e_2, \ e_3^2=e_3$ & $\B_3 \oplus \U_1 \oplus \S_1^1$ \\
     $\bf \mathfrak{J}_{31}$ & 12  & $e_1^2=e_2, \ e_3^2=e_3, \ e_3f=f$ & $\S_{2}^2 \oplus \U_1 \oplus \U_2$\\
     $\bf \mathfrak{J}_{32}$ & 12 & $e_1^2=e_2, \ e_3^2=e_3, \ e_3f=\frac{1}{2}f$ & $\S_{1}^2 \oplus \U_1 \oplus \U_2$\\
      \hline
     $\bf \mathfrak{J}_{33}$ & 6 & $e_1^2=e_2$ & $\B_3 \oplus \U_2 \oplus \S_1^1$ \\
     \hline
     $\bf \mathfrak{J}_{34}$ & 13 & $e_1^2=e_1, \ e_1e_2=e_2, \ e_1e_3=e_3, \ e_2^2=e_3$ & $\T_1 \oplus \S_1^1$\\
     $\bf \mathfrak{J}_{35}$ & 11 & $e_1^2=e_1, \ e_1e_2=e_2, \ e_1e_3=e_3, \ e_2^2=e_3, \ e_1f=f$ & Indecomposable\\
     $\bf \mathfrak{J}_{36}$ & 12 & $e_1^2=e_1, \ e_1e_2=e_2, \ e_1e_3=e_3, \ e_2^2=e_3, \ e_1f=\frac{1}{2}f$ & Indecomposable\\
      \hline
     $\bf \mathfrak{J}_{37}$ & 11 & $e_1^2=e_1, \ e_1e_2=e_2, \ e_1e_3=e_3$ & $\T_2 \oplus \S_1^1$\\
     $\bf \mathfrak{J}_{38}$ & 7 & $e_1^2=e_1, \ e_1e_2=e_2, \ e_1e_3=e_3, \ e_1f=f$ & Indecomposable\\
     $\bf \mathfrak{J}_{39}$ & 10 & $e_1^2=e_1, \ e_1e_2=e_2, \ e_1e_3=e_3, \ e_1f=\frac{1}{2}f$ & Indecomposable\\
      \hline
     $\bf \mathfrak{J}_{40}$ & 10 & $e_1^2=e_2, \ e_1e_2=e_3$ & $\T_3 \oplus \S_1^1$\\
      \hline
     $\bf \mathfrak{J}_{41}$ & 8 & $e_1^2=e_2, \ e_1e_3=e_2$ & $\T_4 \oplus \S_1^1$\\
     \hline
     $\bf \mathfrak{J}_{42}$ & 14 & $e_1^2=e_1, \ e_2^2=e_2, \ e_3^2=e_1+e_2, \ e_1e_3=\frac{1}{2}e_3, \ e_2e_3=\frac{1}{2}e_3$ & $\T_5 \oplus \S_1^1$\\
     $\bf \mathfrak{J}_{43}$ & 12 & $e_1^2=e_1, \ e_2^2=e_2, \ e_3^2=e_1+e_2, \ e_1e_3=\frac{1}{2}e_3, \ e_2e_3=\frac{1}{2}e_3, $ & Indecomposable\\
      & & $e_1f=\frac{1}{2}f, \ e_2f=\frac{1}{2}f$ & \\
      \hline
     $\bf \mathfrak{J}_{44}$ & 12 & $e_1^2=e_1, \ e_1e_2=\frac{1}{2}e_2, \ e_1e_3=e_3$ & $\T_6 \oplus \S_1^1$\\
     $\bf \mathfrak{J}_{45}$ & 10 & $e_1^2=e_1, \ e_1e_2=\frac{1}{2}e_2, \ e_1e_3=e_3, \ e_1f=f$ & Indecomposable\\
     $\bf \mathfrak{J}_{46}$ & 9 & $e_1^2=e_1, \ e_1e_2=\frac{1}{2}e_2, \ e_1e_3=e_3, \ e_1f=\frac{1}{2}f$ & Indecomposable\\
     \hline
     $\bf \mathfrak{J}_{47}$ & 9 & $e_1^2=e_1, \ e_1e_2=\frac{1}{2}e_2, \ e_1e_3=\frac{1}{2}e_3$ & $\T_7 \oplus \S_1^1$\\
     $\bf \mathfrak{J}_{48}$ & 9 & $e_1^2=e_1, \ e_1e_2=\frac{1}{2}e_2, \ e_1e_3=\frac{1}{2}e_3, \ e_1f=f$ & Indecomposable\\
     $\bf \mathfrak{J}_{49}$ & 4 & $e_1^2=e_1, \ e_1e_2=\frac{1}{2}e_2, \ e_1e_3=\frac{1}{2}e_3, \ e_1f=\frac{1}{2}f$ & Indecomposable\\
     \hline
     $\bf \mathfrak{J}_{50}$ & 12 & $e_1^2=e_1, \ e_1e_2=\frac{1}{2}e_2, \ e_2^2=e_3$ & $\T_8 \oplus \S_1^1$\\
     $\bf \mathfrak{J}_{51}$ & 13 & $e_1^2=e_1, \ e_1e_2=\frac{1}{2}e_2, \ e_2^2=e_3, \ e_1f=f$ & Indecomposable\\
     $\bf \mathfrak{J}_{52}$ & 11 & $e_1^2=e_1, \ e_1e_2=\frac{1}{2}e_2, \ e_2^2=e_3, \ e_1f=\frac{1}{2}f$ & Indecomposable\\
      \hline
     $\bf \mathfrak{J}_{53}$ & 13 & $e_1^2=e_1, \ e_1e_2=\frac{1}{2}e_2, \ e_2^2=e_3, \ e_1e_3=e_3$ & $\T_9 \oplus \S_1^1$\\
     $\bf \mathfrak{J}_{54}$ & 12 & $e_1^2=e_1, \ e_1e_2=\frac{1}{2}e_2, \ e_2^2=e_3, \ e_1e_3=e_3, \ e_1f=f$ & Indecomposable\\
     $\bf \mathfrak{J}_{55}$ & 11 & $e_1^2=e_1, \ e_1e_2=\frac{1}{2}e_2, \ e_2^2=e_3, \ e_1e_3=e_3, \ e_1f=\frac{1}{2}f$ & Indecomposable\\
      \hline
     $\bf \mathfrak{J}_{56}$ & 13 & $e_1^2=e_1, \ e_2^2=e_2, \ e_1e_3=\frac{1}{2}e_3, \ e_2e_3=\frac{1}{2}e_3$ & $\T_{10} \oplus \S_1^1$\\
     $\bf \mathfrak{J}_{57}$ & 13 & $e_1^2=e_1, \ e_2^2=e_2, \ e_1e_3=\frac{1}{2}e_3, \ e_2e_3=\frac{1}{2}e_3, \ e_1f=f$ & Indecomposable\\
     $\bf \mathfrak{J}_{58}$ & 12 & $e_1^2=e_1, \ e_2^2=e_2, \ e_1e_3=\frac{1}{2}e_3, \ e_2e_3=\frac{1}{2}e_3, \ e_1f=\frac{1}{2}f$ & Indecomposable\\
     $\bf \mathfrak{J}_{59}$ & 10 & $e_1^2=e_1, \ e_2^2=e_2, \ e_1e_3=\frac{1}{2}e_3, \ e_2e_3=\frac{1}{2}e_3, \ e_1f=\frac{1}{2}f, \ e_2f=\frac{1}{2}f$ & Indecomposable
     
\end{longtable}
\end{Th}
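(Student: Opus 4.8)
The plan is to repeat the strategy of the two preceding theorems, which here simplifies greatly because $\dim\jor_1=1$. Write $\jor=\jor_0\oplus\mathbb{F}f$. Graded commutativity gives $ff=(-1)^{|f||f|}ff=-ff$, so $f^2=0$; hence there is nothing to normalize inside $\jor_1$, and the only structure constants beyond those of $\jor_0$ are the scalars $\lambda_1,\lambda_2,\lambda_3$ defined by $e_if=\lambda_if$, where $\{e_1,e_2,e_3\}$ is a basis of $\jor_0$. In other words $\jor$ is the split null extension of the $3$-dimensional Jordan algebra $\jor_0$ by the one-dimensional odd module $(\mathbb{F}f,\lambda)$, and classifying $\jor$ up to isomorphism reduces to describing, for each $\jor_0$, the admissible functionals $\lambda$ modulo the joint action of $\Aut(\jor_0)$ and rescaling of $f$.

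First I would list the candidates for $\jor_0$: the indecomposable $3$-dimensional Jordan algebras $\T_1,\dots,\T_{10}$ of Table~\ref{tb:joral}, together with the decomposable ones $\U_1\oplus\U_1\oplus\U_1$, $\U_1\oplus\U_1\oplus\U_2$, $\U_1\oplus\U_2\oplus\U_2$ and $\B_i\oplus\U_j$ with $i\in\{1,2,3\}$, $j\in\{1,2\}$ (the zero algebra contributes only the trivial superalgebra, which is omitted). For a fixed $\jor_0$ I then impose the super Jordan identity $J(x,y,z,t)=0$ on all quadruples of basis vectors containing exactly one copy of $f$; quadruples with two or more copies of $f$ hold automatically because $f^2=0$. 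For an idempotent $e$ the instance $J(e,e,e,f)=0$ yields $\lambda(2\lambda-1)(\lambda-1)=0$, so $\lambda\in\{0,\tfrac12,1\}$, just as in the $(1,3)$ and $(2,2)$ arguments; for a direction with $e^{k}=0$ the analogous instance forces $\lambda=0$; and the mixed instances $J(e_i,e_j,e_k,f)=0$ produce the compatibility relations among the $\lambda(e_i)$ (for orthogonal idempotents one finds that $\lambda(e_i)+\lambda(e_j)$ must again lie in $\{0,\tfrac12,1\}$, which already rules out combinations such as $(1,1)$ or $(1,\tfrac12)$). Solving these small polynomial systems case by case produces, for each $\jor_0$, a short list of tuples $\lambda$; picking representatives modulo $\Aut(\jor_0)$ and rescaling of $f$ gives exactly the $59$ superalgebras of the statement, and their orbit dimensions follow from~\eqref{eq1} via $\dim\mathfrak{Der}(\jor)$.

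The bulk of the work is the case analysis over these nineteen base algebras. For those with a unit on the relevant summand, such as $\T_1,\T_2,\T_6,\T_7,\T_8,\T_9$, a single Peirce eigenvalue governs the module and one obtains three superalgebras each; for the nilpotent $\T_3,\T_4$ only $\lambda=0$ survives; for $\T_5$ and $\T_{10}$ one must in addition check that the off-diagonal generator $e_3$ acts trivially, which follows from an instance such as $J(e_3,e_1,e_1,f)=0$; and the reducible base algebras require carrying several eigenvalues simultaneously together with their orthogonality constraints. The genuine obstacle is therefore not any single computation but the exhaustiveness of the enumeration and the separation of isomorphism classes: exhaustiveness holds because every admissible $\lambda$ is forced into the finite list by the displayed identities, while the $59$ classes are distinguished by the decomposition type of $\jor_0$, the multiset of Peirce eigenvalues of $\lambda$, and the invariants of Lemma~\ref{lema:inv}; one may also cross-check the indecomposable ones against the list in~\cite{lowcom}.
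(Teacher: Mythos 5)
Your proposal is correct and follows essentially the same route as the paper: fix the even part among the three-dimensional Jordan algebras (indecomposable $\T_1,\dots,\T_{10}$ and the decomposable ones), note $f^2=0$ so only the scalars $e_if=\beta_i f$ remain, and impose the graded Jordan identity on quadruples with a single $f$ (instances with two or more $f$'s being vacuous), which yields $(\beta-1)\beta(2\beta-1)=0$ at idempotents, $\beta=0$ at nilpotent directions, and the mixed compatibility relations (including triple instances such as $J(e_1,e_2,e_3,f)=0\Rightarrow\beta_1\beta_2\beta_3=0$), after which representatives modulo $\Aut(\jor_0)$ and the orbit dimensions via~(\ref{eq1}) give the stated list.
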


\begin{proof}

\underline{Let $\jor_0 \cong \mathcal{U}_1 \oplus \mathcal{U}_1 \oplus \mathcal{U}_1$.}
Here we are looking for Jordan superalgebras such that $\jor=(\mathbb{F}e_1 +\mathbb{F}e_2+\mathbb{F}e_3)+\mathbb{F}f_1$ with multiplication rules

\begin{center}

       $e_1^2=e_1,$  \ $e_3^2=e_3,$ \ $e_2^2 =e_2,$ \ $e_i f_1 = \beta_i f_1 , \ i=\overline{1,3}.$

\end{center}

Using the Jacobi super identity we obtain the following results:
\begin{center}
 $J(e_1,e_1,e_1,f_1)=0 \  \Rightarrow \  (\beta_1-1)\beta_1(2\beta_1-1)=0,$ \
         $J(e_2,e_2,e_2,f_1)=0 \  \Rightarrow \  (\beta_2-1)\beta_2(2\beta_2-1)=0,$
\end{center}

\begin{center}
         $J(e_3,e_3,e_3,f_1)=0 \  \Rightarrow \  (\beta_3-1)\beta_3(2\beta_3-1)=0,$ \
         $J(e_1,e_2,e_3,f_1)=0 \  \Rightarrow \  \beta_1\beta_2\beta_3=0.$

\end{center}

\begin{itemize}
    \item If $\beta_1=1$ then from $J(e_1,e_1,e_2,f_1)=0$ and $J(e_1,e_1,e_3,f_1)=0$ we get $\beta_2=0$ and $\beta_3=0$, respectively, which gives us the superalgebra $\mathfrak{J}_2$.
    \item If $\beta_1=0$ then 
        \begin{itemize}
            \item[a)] When $\beta_2=0$, from $J(e_3,e_3,e_3,f_1)=0$ we get three superalgebras with $\beta_3 \in \{0,1,\frac{1}{2}\}$. While $\beta_3=1$ gives a superalgebra that is isomorphic to $\mathfrak{J}_2$, from $\beta_3 \in \{0,\frac{1}{2}\}$ we obtain superalgebras $\mathfrak{J}_1$ and $\mathfrak{J}_3$. 
            \item[b)] When $\beta_2=1$, from $J(e_2,e_2,e_3,f_1)=0$ we get $\beta_3=0$, which gives $\mathfrak{J}_2.$
            \item[c)] When $\beta_2=\frac{1}{2}$, from $J(e_2,e_3,e_3,f_1)=0$ we get $\beta_3\in \{0,\frac{1}{2}\}$, which gives superalgebras isomorphic to $\mathfrak{J}_3$ and $\mathfrak{J}_4$.
        \end{itemize}
    \item If $\beta_1=\frac{1}{2}$ from $J(e_1,e_2,e_2,f_1)=0$ we get either $\beta_2=0$ or $\beta_2=\frac{1}{2}$. When the former occurs, we have two superalgebras with $\beta_3\in \{0,\frac{1}{2}\}$, and when the latter does we have $\beta_3=0$. However, all superalgebras obtained here are isomorphic to those of previous steps.
\end{itemize}

\noindent

\underline{Let $\jor_0 \cong \mathcal{U}_1 \oplus \mathcal{U}_1 \oplus \mathcal{U}_2$.}
Here we are looking for Jordan superalgebras such that $\jor=(\mathbb{F}e_1 +\mathbb{F}e_2+\mathbb{F}e_3)+\mathbb{F}f_1$ with multiplication rules

\begin{center}
    \begin{tabular}{l l l}
       $e_1^2=e_1$  & $e_2^2 =e_2$ &  $e_i f_1 = \beta_i f_1, \ i=\overline{1,3}.$
    \end{tabular}
\end{center}

Using the Jacobi super identity we obtain the following results:
\begin{center}
         $J(e_1,e_1,e_1,f_1)=0 \  \Rightarrow \  (\beta_1-1)\beta_1(2\beta_1-1)=0,$ 
\end{center}
\begin{center}
         $J(e_2,e_2,e_2,f_1)=0 \  \Rightarrow \  (\beta_2-1)\beta_2(2\beta_2-1)=0,$ \
         $J(e_3,e_3,e_3,f_1)=0 \  \Rightarrow \  \beta_3=0.$

\end{center}

\begin{itemize}
    \item If $\beta_1=1$ then from $J(e_1,e_1,e_2,f_1)=0$  we get $\beta_2=0$, which gives $\mathfrak{J}_6.$
    \item If $\beta_1=0$ then from $J(e_2,e_2,e_2,f_1)=0$ we get three superalgebras with $\beta_2 \in \{0,1,\frac{1}{2}\}$. New superalgebras here are $\mathfrak{J}_5$ and $\mathfrak{J}_7$.
    \item If $\beta_1=\frac{1}{2}$ from $J(e_1,e_2,e_2,f_1)=0$ we get either $\beta_2=0$ or $\beta_2=\frac{1}{2}$. The only new superlgebra here is $\mathfrak{J}_8.$
\end{itemize}

\noindent

\underline{Let $\jor_0 \cong \mathcal{U}_1 \oplus \mathcal{U}_2 \oplus \mathcal{U}_2$.}
Here we are looking for Jordan superalgebras such that $\jor=(\mathbb{F}e_1 +\mathbb{F}e_2 +\mathbb{F}e_3 )+\mathbb{F}f_1$ with multiplication rules

\begin{center}
    \begin{tabular}{l l}
       $e_1^2=e_1,$  & $e_i f_1 = \beta_i f_1, \ i=\overline{1,3}. $
    \end{tabular}
\end{center}

Using the Jacobi super identity we obtain the following results:
\begin{center}

         $J(e_1,e_1,e_1,f_1)=0 \  \Rightarrow \  (\beta_1-1)\beta_1(2\beta_1-1)=0,$

\end{center}
\begin{center}

         $J(e_2,e_2,e_2,f_1)=0 \  \Rightarrow \  \beta_2=0,$ \
         $J(e_3,e_3,e_3,f_1)=0 \  \Rightarrow \  \beta_3=0.$

\end{center}

In this case we have three superalgebras with $\beta_1\in \{0, 1, \frac{1}{2}\}$, which give us $\mathfrak{J}_9, \mathfrak{J}_{10}$ and $\mathfrak{J}_{11}.$

\noindent

\underline{Let $\jor_0 \cong \mathcal{U}_2 \oplus \mathcal{U}_2 \oplus \mathcal{U}_2$.}
Here we are looking for Jordan superalgebras such that $\jor=(\mathbb{F}e_1 +\mathbb{F}e_2 +\mathbb{F}e_3)+\mathbb{F}f_1$ with multiplication rules

\begin{center}
    \begin{tabular}{l}
       $e_i f_1 = \beta_i f_1, \ i=\overline{1,3}. $
    \end{tabular}
\end{center}
Using the Jacobi super identity we obtain the following results:
\begin{center}
         $J(e_1,e_1,e_1,f_1)=0 \  \Rightarrow \  \beta_1=0,$ \
         $J(e_2,e_2,e_2,f_1)=0 \  \Rightarrow \  \beta_2=0,$ \
         $J(e_3,e_3,e_3,f_1)=0 \  \Rightarrow \  \beta_3=0.$ 
\end{center}
So in this case we obtain a trivial superalgebra.

\noindent

\underline{Let $\jor_0 \cong \mathcal{B}_1 \oplus \mathcal{U}_1$.}
Here we are looking for Jordan superalgebras such that $\jor=(\mathbb{F}e_1 +\mathbb{F}e_2 +\mathbb{F}e_3)+\mathbb{F}f_1 $ with multiplication rules

\begin{center}

       $e_1^2=e_1,$  \ $e_3^2=e_3,$ \ $e_1 e_2 =e_2,$ \ $e_i f_1 = \beta_i f_1, \ i=\overline{1,3}. $ 

\end{center}
Using the Jacobi super identity we obtain the following results:
\begin{center}

         $J(e_2,e_2,e_2,f_1)=0 \  \Rightarrow \  \beta_2=0,$ \
         $J(e_1,e_1,e_1,f_1)=0 \  \Rightarrow \  (\beta_1-1)\beta_1(2\beta_1-1)=0,$ \
         $J(e_3,e_3,e_3,f_1)=0 \  \Rightarrow \  (\beta_3-1)\beta_3(2\beta_3-1)=0,$ \
         $J(e_1,e_1,e_3,f_1)=0 \  \Rightarrow \  \beta_1 \beta_3 (2\beta_1-1)=0,$ \
         $J(e_1,e_3,e_3,f_1)=0 \  \Rightarrow \  \beta_1 \beta_3 (2\beta_3-1)=0.$

\end{center}
\begin{itemize}
    \item If $\beta_1=0$, then $\beta_3 \in \{0,1,\frac{1}{2}\}$, which gives $\mathfrak{J}_{12}, \ \mathfrak{J}_{13}$ and $\mathfrak{J}_{14}.$
    \item If $\beta_1=1$, then $\beta_3=0$, which gives us the superalgebra $\mathcal{J}_{15}$.
    \item If $\beta_1=\frac{1}{2}$, then $\beta_3 \in \{0,\frac{1}{2}\}$, which gives us $\mathfrak{J}_{16}$ and $\mathfrak{J}_{17}$.
\end{itemize}

\noindent

\underline{Let $\jor_0 \cong \mathcal{B}_1 \oplus \mathcal{U}_2$.}
Here we are looking for Jordan superalgebras such that $\jor=(\mathbb{F}e_1 +\mathbb{F}e_2+\mathbb{F}e_3)+\mathbb{F}f_1$ with multiplication rules

\begin{center}
    \begin{tabular}{l l l}
       $e_1^2=e_1,$  & $e_1 e_2 =e_2,$  & $e_i f_1 = \beta_i f_1, \ i=\overline{1,3}.$
    \end{tabular}
\end{center}
Using the Jacobi super identity we obtain the following results:
\begin{center}

         $J(e_2,e_2,e_2,f_1)=0 \  \Rightarrow \  \beta_2=0,$ \
         $J(f_1,f_1,f_1,e_3)=0 \  \Rightarrow \  \beta_3=0,$ \
         $J(e_1,e_1,e_1,f_1)=0 \  \Rightarrow \  (\beta_1-1)\beta_1(2\beta_1-1)=0.$

\end{center}
So, we have three superalgebras with $\beta_1 \in \{0,1,\frac{1}{2}\}$, which give us $\mathfrak{J}_{18}, \ \mathfrak{J}_{19}$ and $\mathfrak{J}_{20}.$

\noindent

\underline{Let $\jor_0 \cong \mathcal{B}_2 \oplus \mathcal{U}_1$.}
Here we are looking for Jordan superalgebras such that $\jor=(\mathbb{F}e_1+\mathbb{F}e_2+\mathbb{F}e_3)+\mathbb{F}f_1$ with multiplication rules

\begin{center}

       $e_1^2=e_1,$ \ $e_3^2=e_3,$ \ $e_1 e_2 =\frac{1}{2}e_2,$ \ $e_i f_1 = \beta_i f_1, \ i=\overline{1,3}. $

\end{center}
Using the Jacobi super identity we obtain the following results:
\begin{center}

        $J(e_2,e_2,e_2,f_1)=0 \  \Rightarrow \  \beta_2=0,$ \
        $J(e_1,e_1,e_1,f_1)=0 \  \Rightarrow \  (\beta_1-1)\beta_1(2\beta_1-1)=0,$ \
        $J(e_3,e_3,e_3,f_1)=0 \  \Rightarrow \  (\beta_3-1)\beta_3(2\beta_3-1)=0,$ \
        $J(e_1,e_1,e_3,f_1)=0 \  \Rightarrow \  \beta_1 \beta_3 (2\beta_1-1)=0,$ \
        $J(e_1,e_3,e_3,f_1)=0 \  \Rightarrow \  \beta_1 \beta_3 (2\beta_3-1)=0.$

\end{center}
\begin{itemize}
    \item If $\beta_1=0$, then $\beta_3 \in \{0,1,\frac{1}{2}\}$ which gives us $\mathfrak{J}_{21}, \ \mathfrak{J}_{22}$ and $\mathfrak{J}_{23}$.
    \item If $\beta_1=1$, then $\beta_3=0$, which gives us the superalgebra $\mathfrak{J}_{24}$.

    \item If $\beta_1=\frac{1}{2}$, then $\beta_3 \in \{0,\frac{1}{2}\}$, which gives $\mathfrak{J}_{25}$ and $\mathfrak{J}_{26}$.
\end{itemize}

\noindent

\underline{Let $\jor_0 \cong \mathcal{B}_2 \oplus \mathcal{U}_2$.}
Here we are looking for Jordan superalgebras such that $\jor=(\mathbb{F}e_1 +\mathbb{F}e_2+\mathbb{F}e_3)+\mathbb{F}f_1$ with multiplication rules

\begin{center}
    \begin{tabular}{l l l}
       $e_1^2=e_1,$  & $e_1 e_2 =\frac{1}{2}e_2$ & $e_i f_1 = \beta_i f_1, \ i=\overline{1,3}.$ 
    \end{tabular}
\end{center}
Using the Jacobi super identity we obtain the following results:
\begin{center}

        $J(e_2,e_2,e_2,f_1)=0 \  \Rightarrow \  \beta_2=0,$ \
        $J(e_3,e_3,e_3,f_1)=0 \  \Rightarrow \  \beta_3=0,$ \
        $J(e_1,e_1,e_1,f_1)=0 \  \Rightarrow \  (\beta_1-1)\beta_1(2\beta_1-1)=0.$

\end{center}
So we have superalgebras $\mathfrak{J}_{27}, \ \mathfrak{J}_{28}$ and $\mathfrak{J}_{29}$ from $\beta_1 \in \{0,1,\frac{1}{2}\}$.

\noindent

\underline{Let $\jor_0 \cong \mathcal{B}_3 \oplus \mathcal{U}_1$.}
Here we are looking for Jordan superalgebras such that $\jor=(\mathbb{F}e_1 +\mathbb{F}e_2 +\mathbb{F}e_3 )+\mathbb{F}f_1$ with multiplication rules

\begin{center}
    \begin{tabular}{l l l}
       $e_1^2=e_2,$  & $e_3^2=e_3$ & $e_i f_1 = \beta_i f_1, \ i=\overline{1,3}.$ 
    \end{tabular}
\end{center}
Using the Jacobi super identity we obtain the following results:
\begin{center}

        $J(e_2,e_2,e_2,f_1)=0 \  \Rightarrow \  \beta_2=0,$ \
        $J(e_1,e_1,e_1,f_1)=0 \  \Rightarrow \  \beta_1=0,$ \
        $J(e_3,e_3,e_3,f_1)=0 \  \Rightarrow \  (\beta_3-1)\beta_3(2\beta_3-1)=0.$

\end{center}
So we have Jordan superalgebras $\mathfrak{J}_{30}, \ \mathfrak{J}_{31}$ and $\mathfrak{J}_{32}$ from $\beta_3 \in \{0,1,\frac{1}{2}\}$.

\noindent

\underline{Let $\jor_0 \cong \mathcal{B}_3 \oplus \mathcal{U}_2$.}
Here we are looking for Jordan superalgebras such that $\jor=(\mathbb{F}e_1 +\mathbb{F}e_2 +\mathbb{F}e_3)+\mathbb{F}f_1$ with multiplication rules

\begin{center}
    \begin{tabular}{l l}
       $e_1^2=e_2,$  & $e_i f_1 = \beta_i f_1, \ i=\overline{1,3}.$  
    \end{tabular}
\end{center}
Using the Jacobi super identity we obtain the following results:
\begin{center}

        $J(e_2,e_2,e_2,f_1)=0 \  \Rightarrow \  \beta_2=0,$ \
        $J(e_1,e_1,e_1,f_1)=0 \  \Rightarrow \  \beta_1=0,$\
        $J(e_3,e_3,e_3,f_1)=0 \  \Rightarrow \  \beta_3=0.$

\end{center}
So the superalgebra in this case is $\mathfrak{J}_{33}$.

\noindent

\underline{Let $\jor_0 \cong \mathcal{T}_1$.}
Here we are looking for Jordan superalgebras such that $\jor=(\mathbb{F}e_1 +\mathbb{F}e_2+\mathbb{F}e_3)+\mathbb{F}f_1$ with multiplication rules

\begin{center}
    \begin{tabular}{l l l l l}
       $e_1^2=e_1$, & $e_2^2 =e_3$, & $e_1 e_2 =e_2$,  & $e_1 e_3=e_3$, & $e_i f_1 = \beta_i f_1, \ i=\overline{1,3}. $
    \end{tabular}
\end{center}

Using the Jacobi super identity we obtain the following results:
\begin{center}

         $J(e_3,e_3,e_3,f_1)=0 \  \Rightarrow \  \beta_3=0,$ \
         $J(e_2,e_2,e_2,f_1)=0 \  \Rightarrow \  \beta_2=0,$ \
         $J(e_1,e_1,e_1,f_1)=0 \  \Rightarrow \  (\beta_1-1)\beta_1(2\beta_1-1)=0.$

\end{center}

Thus, from $\beta_1 \in \{0,1,\frac{1}{2}\}$, we get $\mathfrak{J}_{34}, \ \mathfrak{J}_{35}$ and $\mathfrak{J}_{36}.$

\noindent

\underline{Let $\jor_0 \cong \mathcal{T}_2$.}
Here we are looking for Jordan superalgebras such that $\jor=(\mathbb{F}e_1 +\mathbb{F}e_2+ \mathbb{F}e_3 )+ \mathbb{F}f_1$ with multiplication rules

\begin{center}

       $e_1^2=e_1,$ \ $e_1 e_3=e_3,$ \ $e_1 e_2 =e_2,$ \ $e_i f_1 = \beta_i f_1, \ i=\overline{1,3}.$ 

\end{center}

Using the Jacobi super identity we obtain the following results:
\begin{center}

         $J(e_2,e_2,e_2,f_1)=0 \  \Rightarrow \  \beta_2=0,$ \
         $J(e_3,e_3,e_3,f_1)=0 \  \Rightarrow \  \beta_3=0,$ \
         $J(e_1,e_1,e_1,f_1)=0 \  \Rightarrow \  (\beta_1-1)\beta_1(2\beta_1-1)=0.$

\end{center}

This we obtain $\mathfrak{J}_{37}, \ \mathfrak{J}_{38}$ and $\mathfrak{J}_{39}$ from $\beta_1 \in \{0,1,\frac{1}{2}\}.$

\noindent

\underline{Let $\jor_0 \cong \mathcal{T}_3$.}
Here we are looking for Jordan superalgebras such that $\jor=(\mathbb{F}e_1 +\mathbb{F}e_2+\mathbb{F}e_3)+\mathbb{F}f_1$ with multiplication rules

\begin{center}
    \begin{tabular}{l l}
       $e_1^2=e_2,$  & $e_i f_1 = \beta_i f_1, \ i=\overline{1,3}.$
    \end{tabular}
\end{center}

Using the Jacobi super identity we obtain the following results:
\begin{center}

         $J(e_2,e_2,e_2,f_1)=0 \  \Rightarrow \  \beta_2=0,$ \
         $J(e_3,e_3,e_3,f_1)=0 \  \Rightarrow \  \beta_3=0,$ \
         $J(e_1,e_1,e_1,f_1)=0 \  \Rightarrow \  \beta_1=0.$

\end{center}
Thus we have $\mathfrak{J}_{40}.$

\noindent

\underline{Let $\jor_0 \cong \mathcal{T}_4$.}
Here we are looking for Jordan superalgebras such that $\jor=(\mathbb{F}e_1 +\mathbb{F}e_2+ \mathbb{F}e_3)+ \mathbb{F}f_1$ with multiplication rules

\begin{center}
    \begin{tabular}{l l l}
       $e_1^2=e_2,$  & $e_1 e_3=e_2,$ & $e_i f_1 = \beta_i f_1, \ i=\overline{1,3}.$  
    \end{tabular}
\end{center}

Using the Jacobi super identity we obtain only the following result:
\begin{center}

         $J(e_2,e_2,e_2,f_1)=0 \  \Rightarrow \  \beta_2=0,$ \
         $J(e_3,e_3,e_3,f_1)=0 \  \Rightarrow \  \beta_3=0,$ \
         $J(e_1,e_1,e_1,f_1)=0 \  \Rightarrow \  \beta_1=0.$

\end{center}
Thus we obtain $\mathfrak{J}_{41}.$

\noindent

\underline{Let $\jor_0 \cong \mathcal{T}_5$.}
Here we are looking for Jordan superalgebras such that $\jor=(\mathbb{F}e_1 +\mathbb{F}e_2+\mathbb{F}e_3)+\mathbb{F}f_1$ with multiplication rules

\begin{center}

       $e_1^2=e_1$ \ $e_3^2=e_1+e_2,$ \ $e_1 e_3=\frac{1}{2}e_3$  \ $e_2^2 =e_2,$ \
       $e_2 e_3=\frac{1}{2} e_3$ \ $e_i f_1 = \beta_i f_1, \ i=\overline{1,3}.$

\end{center}

Using the Jacobi super identity we obtain only the following result:
\begin{center}
        \begin{tabular}{l}
         $J(e_1,e_1,e_1,f_1)=0 \  \Rightarrow \  (\beta_1-1)\beta_1(2\beta_1-1)=0.$
         \end{tabular}
\end{center}
If $\beta_1=0$ then from 
\begin{center}

         $J(e_1,e_1,e_3,f_1)=0 \  \Rightarrow \  \beta_3=0,$ \
         $J(e_1,f_1,e_3,e_3)=0 \  \Rightarrow \  \beta_2=0,$

\end{center}
we get $\mathfrak{J}_{42}$.

\noindent
If $\beta_1=1$ then from 
\begin{center}
         $J(e_1,e_1,e_2,f_1)=0 \  \Rightarrow \  \beta_2=0,$ \
         $J(e_1,e_1,e_3,f_1)=0 \  \Rightarrow \  \beta_3=0,$ \
         $J(e_1,f_1,e_3,e_3)=0 \  \Rightarrow \  -\frac{1}{2}(1+\beta_2-2\beta_3^2)=0.$

\end{center}
we get a contradiction. So there is no superalgebra in this case.

\noindent
If $\beta_1=\frac{1}{2}$ then from 
\begin{center}

         $J(e_1,e_3,e_1,f_1)=0 \  \Rightarrow \  \beta_3=0,$ \
         $J(e_3,e_3,e_1,f_1)=0 \  \Rightarrow \  \beta_2=\frac{1}{2}.$ 

\end{center}

we have $\mathfrak{J}_{43.}$

\noindent

\underline{Let $\jor_0 \cong \mathcal{T}_6$.}
Here we are looking for Jordan superalgebras such that $\jor=(e_1 \mathbb{F}+e_2 \mathbb{F}+e_3 \mathbb{F})+f_1 \mathbb{F}$ with multiplication rules

\begin{center}

       $e_1^2=e_1,$ \ $e_1 e_2 =\frac{1}{2}e_2,$ \ $e_1 e_3=e_3,$ \ $e_i f_1 = \beta_i f_1, \ i=\overline{1,3}. $

\end{center}

Using the Jacobi super identity we obtain the following results:
\begin{center}
   
         $J(e_1,e_1,e_1,f_1)=0 \  \Rightarrow \  (\beta_1-1)\beta_1(2\beta_1-1)=0,$ \
         $J(e_2,e_2,e_2,f_1)=0 \  \Rightarrow \  \beta_2=0,$ \
         $J(f_1,e_3,e_3,e_3)=0 \  \Rightarrow \  \beta_3=0.$
   
\end{center}
Thus we get $\mathfrak{J}_{44}, \ \mathfrak{J}_{45}$ and $\mathfrak{J}_{46}$ from $\beta_1=0, \beta_1=1$ and $\beta_1=\frac{1}{2}$, respectively.

\noindent

\underline{Let $\jor_0 \cong \mathcal{T}_7$.}
Here we are looking for Jordan superalgebras such that $\jor=(\mathbb{F}e_1 +\mathbb{F}e_2+\mathbb{F}e_3)+\mathbb{F}f_1$ with multiplication rules

\begin{center}

       $e_1^2=e_1,$ \ $e_1 e_2 =\frac{1}{2}e_2,$ \ $e_1 e_3=\frac{1}{2}e_3,$ \ $e_i f_1 = \beta_i f_1  \ i=\overline{1,3}.$

\end{center}

Using the Jacobi super identity we obtain the following results:
\begin{center}
        \begin{tabular}{l}
         $J(e_2,e_2,e_2,f_1)=0 \  \Rightarrow \  \beta_2=0,$ \\
         $J(e_3,e_3,e_3,f_1)=0 \  \Rightarrow \  \beta_3=0,$ \\
         $J(e_1,e_1,e_1,f_1)=0 \  \Rightarrow \  (\beta_1-1)\beta_1(2\beta_1-1)=0.$
         \end{tabular}
\end{center}
So, we get $\mathfrak{J}_{47}, \ \mathfrak{J}_{48}$ and $\mathfrak{J}_{49}$ superalgebras from $\beta_1=0, \beta_1=1$ and $\beta_1=\frac{1}{2}$, respectively.

\noindent

\underline{Let $\jor_0 \cong \mathcal{T}_8$.}
Here we are looking for Jordan superalgebras such that $\jor=(\mathbb{F}e_1 +\mathbb{F}e_2+ \mathbb{F}e_3)+ \mathbb{F}f_1$ with multiplication rules

\begin{center}

       $e_1^2=e_1,$  \ $e_1 e_2 =\frac{1}{2}e_2,$ \ $e_2^2 =e_3,$ \ $e_i f_1 = \beta_i f_1, \ i=\overline{1,3}. $

\end{center}

Using the Jacobi super identity we obtain the following results:
\begin{center}

         $J(e_3,e_3,e_3,f_1)=0 \  \Rightarrow \  \beta_3=0,$ \
         $J(e_2,e_2,e_2,f_1)=0 \  \Rightarrow \  \beta_2=0,$ \
         $J(e_1,e_1,e_1,f_1)=0 \  \Rightarrow \  (\beta_1-1)\beta_1(2\beta_1-1)=0.$

\end{center}
Thus we get $\mathfrak{J}_{50}, \ \mathfrak{J}_{51}$ and $\mathfrak{J}_{52}$.

\noindent

\underline{Let $\jor_0 \cong \mathcal{T}_9$.}
Here we are looking for Jordan superalgebras such that $\jor=(\mathbb{F}e_1 +\mathbb{F}e_2+ \mathbb{F}e_3)+ \mathbb{F}f_1 $ with multiplication rules

\begin{center}
    \begin{tabular}{l l l l l}
       $e_1^2=e_1$,  & $e_1 e_2 =\frac{1}{2}e_2$, & $e_1 e_3=e_3$, & $e_2^2 =e_3$, & $e_i f_1 = \beta_i f_1, \ i=\overline{1,3}. $ 
    \end{tabular}
\end{center}

Using the Jacobi super identity we obtain the following results:
\begin{center}

         $J(e_3,e_3,e_3,f_1)=0 \  \Rightarrow \  \beta_3=0,$ \
         $J(e_2,e_2,e_2,f_1)=0 \  \Rightarrow \  \beta_2=0,$ \
         $J(e_1,e_1,e_1,f_1)=0 \  \Rightarrow \  (\beta_1-1)\beta_1(2\beta_1-1)=0.$

\end{center}
Thus we obtain $\mathfrak{J}_{53}, \ \mathfrak{J}_{54}$ and $\mathfrak{J}_{55}$

\noindent

\underline{Let $\jor_0 \cong \mathcal{T}_{10}$.}
Here we are looking for Jordan superalgebras such that $\jor=(\mathbb{F}e_1 +\mathbb{F}e_2+ \mathbb{F}e_3 )+ \mathbb{F}f_1$ with multiplication rules

\begin{center}
 
       $e_1^2=e_1$, \ $e_1 e_3=\frac{1}{2}e_3$, \ $e_2^2 =e_2$, \  $e_2 e_3=\frac{1}{2}e_3$, \  $e_i f_1 = \beta_i f_1, \ i=\overline{1,3}. $ 

\end{center}

\noindent
Using the Jacobi super identity we obtain the following results:
\begin{center}

         $J(e_3,e_3,e_3,f_1)=0 \  \Rightarrow \  \beta_3=0,$ \
         $J(e_1,e_1,e_1,f_1)=0 \  \Rightarrow \  (\beta_1-1)\beta_1(2\beta_1-1)=0.$

\end{center}

\noindent
If $\beta_1=0$ then 
\begin{center}
        \begin{tabular}{l}
         $J(e_2,e_2,e_2,f_1)=0 \  \Rightarrow \  (\beta_2-1)\beta_2(2\beta_2-1)=0.$ 
         \end{tabular}
\end{center}

So we have three superalgebras isomorphic to $\mathfrak{J}_{56}, \ \mathfrak{J}_{57}$ and $\mathfrak{J}_{58}$.

\noindent
If $\beta_1=1$ then  $J(e_1,e_1,e_2,f_1)=0 \  \Rightarrow \  \beta_2=0$  gives us $\mathfrak{J}_{57}$.

\noindent
If $\beta_1=\frac{1}{2}$ then  $J(e_1,e_2,e_2,f_1)=0 \  \Rightarrow \  \frac{1}{2} \beta_2(2\beta_2 -1)=0.$ 
So we have $\mathfrak{J}_{58}$ and $\mathfrak{J}_{59}$.
\end{proof}

\section{Irreducible components}

\begin{Th}\label{geo1}
The variety of $4$-dimensional Jordan superalgebras of type  $(1,3)$ has 
dimension  $12$ and it has  $11$  irreducible components defined by  
\begin{center}
$\mathcal{C}_1=\overline{\{ {\bf J}_{5}\}},$ \
$\mathcal{C}_2=\overline{\{ {\bf J}_{7}\}},$ \ 
$\mathcal{C}_3=\overline{\{ {\bf J}_{8}\}},$ \  
$\mathcal{C}_4=\overline{\{ {\bf J}_{9}\}},$ \
$\mathcal{C}_5=\overline{\{ {\bf J}_{11}\}},$ \\

$\mathcal{C}_6=\overline{\{ {\bf J}_{12}\}},$ \
$\mathcal{C}_7=\overline{\{ {\bf J}_{14}\}},$ \ 
$\mathcal{C}_8=\overline{\{ {\bf J}_{15}\}},$ \  
$\mathcal{C}_9=\overline{\{ {\bf J}_{16}\}},$ \
$\mathcal{C}_{10}=\overline{\{ {\bf J}_{17}\}},$ \ $\mathcal{C}_{11}=\overline{\{ {\bf J}_{19}\}},$ \\

\end{center}

In particular, all of them are rigid superalgebras.
 
\end{Th}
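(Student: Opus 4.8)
The plan is to feed the algebraic classification of the nineteen Jordan superalgebras ${\bf J}_1,\dots,{\bf J}_{19}$ of type $(1,3)$ into the degeneration machinery of Section~1. Since that classification exhibits only finitely many isomorphism classes and no parametric family in this type, the variety $\mathscr{JS}^{1,3}=\mathbb{L}(T)$ equals the finite union $\bigcup_{i=1}^{19}\overline{O({\bf J}_i)}$ of irreducible orbit closures; hence every irreducible component of $\mathscr{JS}^{1,3}$ is one of the $\overline{O({\bf J}_i)}$, the components are exactly the maximal ones under inclusion, $\overline{O({\bf J}_i)}\subseteq\overline{O({\bf J}_j)}$ iff ${\bf J}_j\to{\bf J}_i$, and $\overline{O({\bf J}_i)}$ is a component iff ${\bf J}_i$ is rigid. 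So the proof splits into: (a) showing that each of the eight remaining superalgebras ${\bf J}_1,{\bf J}_2,{\bf J}_3,{\bf J}_4,{\bf J}_6,{\bf J}_{10},{\bf J}_{13},{\bf J}_{18}$ is a proper degeneration of some superalgebra of strictly larger orbit dimension (whence, by transitivity and the descending chain condition, it lies in the closure of one of the eleven listed orbits), and (b) showing that each of the eleven listed superalgebras is rigid. The orbit dimensions used throughout are read off the ``Orbit'' column of the classification (computed via~\eqref{eq1}).

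For (a) I will exhibit, for each of the eight superalgebras, an explicit one-parameter subgroup $g_t\in\GL_4(\mathbb{C})$ with $\lim_{t\to0}g_t*\mu=\mu'$. The overwhelming majority of these are obtained by rescaling a single structure constant (e.g.\ ${\bf J}_6\to{\bf J}_2$ by $f_3\mapsto t f_3$), and the rest follow by composing such elementary degenerations, using the direct-sum decompositions listed in the classification together with the degeneration graphs of two- and three-dimensional Jordan algebras (Theorems~\ref{2d} and~\ref{3d}) and the three-dimensional superalgebra tables, plus Lemma on $G$-graded degenerations and transitivity. The upshot of this step is $\mathscr{JS}^{1,3}=\bigcup\overline{O({\bf J}_i)}$ with $i$ ranging over $5,7,8,9,11,12,14,15,16,17,19$.

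For (b) I first use that a proper degeneration strictly decreases the orbit dimension (equivalently, strictly increases $\dim\Aut$), so among the eleven a degeneration can only run from strictly larger to strictly smaller orbit dimension. In particular ${\bf J}_5,{\bf J}_{12},{\bf J}_{14}$, all of orbit dimension $12$ which is the maximum over all nineteen superalgebras, are automatically rigid. For each of the other eight listed superalgebras I will run through the finitely many candidates of strictly larger orbit dimension and eliminate each using Lemma~\ref{lema:inv}: first the cheap invariants $\dim({\bf J}^r)_i$ and the even-part degeneration $({\bf J})_0\to({\bf J}')_0$, noting that $({\bf J})_0$ is $\mathcal{U}_1$ or $\mathcal{U}_2$ and that $\mathcal{U}_1\to\mathcal{U}_2$ but not conversely (this alone isolates ${\bf J}_5$, the unique listed superalgebra with $({\bf J})_0\cong\mathcal{U}_2$, and kills many pairs by a dimension count such as $\dim(({\bf J}_5)^2)_1=1<2=\dim(({\bf J}_6)^2)_1$); then, where these are indecisive, the P.I.\ criterion, the passage to $\mathfrak{a}({\bf J})$, and the Burde invariant; and in the residual cases an explicit closed subset $\mathcal{R}$, stable under the upper-triangular subgroup, which contains the structure constants of the larger superalgebra in a suitable basis but of the smaller one in no basis, exactly as in the Methods subsection. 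Combining (a) and (b): the eleven orbit closures cover $\mathscr{JS}^{1,3}$, each is a component, the eight remaining closures are properly contained in them hence not components, so these eleven are precisely the irreducible components, each ${\bf J}_i$ in the list is rigid, and $\dim\mathscr{JS}^{1,3}=\max_i\dim O({\bf J}_i)=\dim O({\bf J}_5)=\dim O({\bf J}_{12})=\dim O({\bf J}_{14})=12$.

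The main obstacle is step (b): although every individual non-degeneration is elementary, there are a good number of pairs to handle --- most delicately, ruling out degenerations from the three $12$-dimensional orbits down to the six $10$-dimensional listed ones --- and whenever two superalgebras agree on all of the cheap invariants one must invent an ad hoc stable set $\mathcal{R}$ and verify by direct (if routine) computation both its stability and the non-membership of the target for every choice of basis. Step (a) is comparatively safe; its only subtle point is to check that each chosen $g_t$ lands on the intended isomorphism class rather than on a coincidentally isomorphic superalgebra.
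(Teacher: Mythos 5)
Your proposal is correct and takes essentially the same route as the paper's proof: the paper likewise lists the orbit dimensions computed from (\ref{eq1}), exhibits explicit parametric bases giving degenerations from ${\bf J}_{5}$, ${\bf J}_{12}$, ${\bf J}_{14}$, ${\bf J}_{19}$ (plus transitivity) onto the eight non-rigid classes, and establishes the needed non-degenerations via Lemma~\ref{lema:inv}(2) together with ad hoc Zariski-closed sets $\mathcal{R}$, exactly as you outline. The only difference is one of detail: the paper actually writes down the specific parametric bases and $\mathcal{R}$'s (note that some degenerations, e.g. ${\bf J}_{12}\to{\bf J}_{6}$ with indecomposable target, need a genuine four-dimensional basis change rather than a direct-sum or lower-dimensional-graph argument), which your plan defers but correctly anticipates.
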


\begin{proof}
After carefully  checking  the dimensions of orbit closures of the more important for us superalgebras, we have 

\begin{longtable}{rcl}
      
$\dim  \mathcal{O}({\bf J}_{5})=\dim  \mathcal{O}({\bf J}_{12})=\dim  \mathcal{O}({\bf J}_{14})$&$=$&$12,$ \\ 
$\dim  \mathcal{O}({\bf J}_{8})=
\dim  \mathcal{O}({\bf J}_{9})=
\dim  \mathcal{O}({\bf J}_{11})= 
\dim  \mathcal{O}({\bf J}_{17})= 
\dim  \mathcal{O}({\bf J}_{19}) $&$=$&$10,$ \\ 

$\dim  \mathcal{O}({\bf J}_{16})$&$=$&$9,$ \\ 

$\dim  \mathcal{O}({\bf J}_{7})$&$=$&$7,$ \\ 

$\dim  \mathcal{O}({\bf J}_{15})$&$=$&$4.$ 
\end{longtable}   

If $E_{f_1}^t, E_{f_2}^t, E_{f_3}^t, E_{e}^t$ is a {\it parametric basis} for ${\bf A}\to {\bf B}$, then we denote a degeneration by ${\bf A}\xrightarrow{(E_{f_1}^t, E_{f_2}^t, E_{f_3}^t, E_{e}^t)} {\bf B}$.

\begin{longtable}{lcl|lcl} \hline

${\bf J}_{3}$ & $\xrightarrow{ (tf_1, f_2+f_3+te, f_3+te, te)}$ & ${\bf J}_{1}$ &  ${\bf J}_{5}$ & $\xrightarrow{ (tf_1, tf_2, f_3,e)}$ & ${\bf J}_{2}$ 
\\  \hline

${\bf J}_{5}$ & $\xrightarrow{ (f_1-f_3, f_2, tf_3-te,e)}$ & ${\bf J}_{3}$ & 

${\bf J}_{5}$ & $\xrightarrow{ (tf_1+f_2, tf_2, f_3,e)}$ & ${\bf J}_{4}$
\\  \hline

${\bf J}_{12}$ & $\xrightarrow{ (f_1+2f_2+2f_3, tf_2+2tf_3, t^2f_3, te)}$ & ${\bf J}_{6}$ 

&
${\bf J}_{14}$ & $\xrightarrow{ (f_1, f_2, tf_3, e)}$ & ${\bf J}_{10}$ 

 \\  \hline

${\bf J}_{14}$ & $\xrightarrow{ (f_1, f_2, tf_3, e)}$ & ${\bf J}_{13}$ & 
${\bf J}_{19}$ & $\xrightarrow{ (tf_1, f_2, f_3, e)}$ & ${\bf J}_{18}$\\  \hline
\end{longtable}

Below we list all the important reasons for necessary non-degenerations.

\begin{longtable}{lcl|l}
\hline
    \multicolumn{4}{c}{Non-degenerations reasons} \\
\hline

${\bf J}_{5}$ & $\not \rightarrow  $ & 

$\begin{array}{llll}
{\bf J}_{7}, {\bf J}_{8}, {\bf J}_{9}, 
{\bf J}_{11},{\bf J}_{15}, {\bf J}_{16}, {\bf J}_{17},{\bf J}_{19}\\
\end{array}$ 
& 
$\mathcal R=\left\{\begin{array}{l}
\mbox{According to Lemma~\ref{lema:inv} (2)}
\end{array}\right\}
$\\

\hline

${\bf J}_{12}$ & $\not \rightarrow  $ & 

$\begin{array}{llll}
{\bf J}_{7},{\bf J}_{8}, {\bf J}_{9}, {\bf J}_{11}, {\bf J}_{15}, {\bf J}_{16}, {\bf J}_{17}, {\bf J}_{19}\\
\end{array}$ 
& 
$\mathcal R=\left\{\begin{array}{l}
JJ\subset \{e,f_2,f_3\}, \
c_{44}^4=2c_{24}^2, \ c_{44}^4=c_{34}^3
\end{array}\right\}
$\\

\hline

${\bf J}_{14}$ & $\not \rightarrow  $ & 

$\begin{array}{llll}
{\bf J}_{7}, {\bf J}_{8}, {\bf J}_{9}, {\bf J}_{11}, {\bf J}_{15}, {\bf J}_{16}, {\bf J}_{17}, {\bf J}_{19} \\
\end{array}$ 
& 
$\mathcal R=\left\{\begin{array}{lllll}
JJ\subset \{e,f_2,f_3\}, \
c_{44}^4=c_{34}^3, \ c_{42}^2=c_{34}^3
\end{array}\right\}
$\\

\hline

${\bf J}_{8}$ & $\not \rightarrow  $ & 

$\begin{array}{llll}
{\bf J}_{7}, {\bf J}_{15}, {\bf J}_{16}\\
\end{array}$ 
& 
$\mathcal R=\left\{\begin{array}{l}
c_{ij}^1=0, \ 2c_{34}^3=c_{44}^4
\end{array}\right\}
$\\

\hline

${\bf J}_{9}$ & $\not \rightarrow  $ & 

$\begin{array}{llll}
{\bf J}_{7}, {\bf J}_{15}, {\bf J}_{16}\\
\end{array}$ 
& 
$\mathcal R=\left\{\begin{array}{l}
c_{ij}^1=0, \ c_{34}^3=c_{44}^4
\end{array}\right\}
$\\

\hline

${\bf J}_{11}$ & $\not \rightarrow  $ & 

$\begin{array}{llll}
{\bf J}_{7},  {\bf J}_{15}, {\bf J}_{16}\\
\end{array}$ 
& 
$\mathcal R=\left\{\begin{array}{l}
c_{ij}^1=0, \ 2c_{34}^3=c_{44}^4
\end{array}\right\}
$\\

\hline

${\bf J}_{17}$ & $\not \rightarrow  $ & 

$\begin{array}{llll}
{\bf J}_{7},  {\bf J}_{15}, {\bf J}_{16}\\
\end{array}$ 
& 
$\mathcal R=\left\{\begin{array}{l}

A_2A_2\subset A_2, \ 2c_{14}^1=c_{44}^4, \ c_{24}^2=c_{44}^4, \ c_{34}^3=c_{44}^4

\end{array}\right\}
$\\

\hline

${\bf J}_{19}$ & $\not \rightarrow  $ & 

$\begin{array}{llll}
{\bf J}_{7},  {\bf J}_{15}, {\bf J}_{16}\\
\end{array}$ 
& 
$\mathcal R=\left\{\begin{array}{l}
A_2A_2\subset A_2, \
c_{14}^1=c_{44}^4, \ c_{24}^2=c_{44}^4, \ c_{34}^3=c_{44}^4
\end{array}\right\}
$\\

\hline

${\bf J}_{16}$ & $\not \rightarrow  $ & 

$\begin{array}{llll}
{\bf J}_{7},  {\bf J}_{15}\\
\end{array}$ 
& 
$\mathcal R=\left\{\begin{array}{l}
A_2A_2\subset A_2, \
2c_{14}^1=c_{44}^4, \ 2c_{24}^2=c_{44}^4, \ c_{34}^3=c_{44}^4
\end{array}\right\}
$\\

\hline

${\bf J}_{7}$ & $\not \rightarrow  $ & 

$\begin{array}{llll}
{\bf J}_{15}\\
\end{array}$ 
& 
$\mathcal R=\left\{\begin{array}{l}
JJ\subset \{e\} \\
\end{array}\right\}
$\\

\hline

\end{longtable}

Here $c_{ij}^{k}$ coefficients are structural constants in the $x_1=f_1, \ x_2=f_2, \ x_3=f_3, \ x_4=e$ basis.

\end{proof}

\begin{Th}\label{geo2}
The variety of $4$-dimensional Jordan superalgebras of type  $(2,2)$ has 
dimension  $13$ and it has  $25$  irreducible components defined by  
\begin{center}
$\mathcal{C}_1=\overline{\{ {\bf \mathcal{J}}_{1}\}},$ \
$\mathcal{C}_2=\overline{\{ {\bf \mathcal{J}}_{2}\}},$ \ 
$\mathcal{C}_3=\overline{\{ {\bf \mathcal{J}}_{3}\}},$ \  
$\mathcal{C}_4=\overline{\{ {\bf \mathcal{J}}_{5}\}},$ \
$\mathcal{C}_5=\overline{\{ {\bf \mathcal{J}}_{6}\}},$ \\

$\mathcal{C}_6=\overline{\{ {\bf \mathcal{J}}_{8}\}},$ \
$\mathcal{C}_7=\overline{\{ {\bf \mathcal{J}}_{9}\}},$ \ 
$\mathcal{C}_8=\overline{\{ {\bf \mathcal{J}}_{10}\}},$ \  
$\mathcal{C}_9=\overline{\{ {\bf \mathcal{J}}_{11}\}},$ \
$\mathcal{C}_{10}=\overline{\{ {\bf \mathcal{J}}_{12}\}},$ \ $\mathcal{C}_{11}=\overline{\{ {\bf \mathcal{J}}_{13}\}},$ \
$\mathcal{C}_{12}=\overline{\{ {\bf \mathcal{J}}_{14}\}},$ \
$\mathcal{C}_{13}=\overline{\{ {\bf \mathcal{J}}_{16}^{t}\}},$ \
$\mathcal{C}_{14}=\overline{\{ {\bf \mathcal{J}}_{24}\}},$ \
$\mathcal{C}_{15}=\overline{\{ {\bf \mathcal{J}}_{32}\}},$ \
$\mathcal{C}_{16}=\overline{\{ {\bf \mathcal{J}}_{42}\}},$ \
$\mathcal{C}_{17}=\overline{\{ {\bf \mathcal{J}}_{49}\}},$ \
$\mathcal{C}_{18}=\overline{\{ {\bf \mathcal{J}}_{50}\}},$ \
$\mathcal{C}_{19}=\overline{\{ {\bf \mathcal{J}}_{54}\}},$ \
$\mathcal{C}_{20}=\overline{\{ {\bf \mathcal{J}}_{56}\}},$ \
$\mathcal{C}_{21}=\overline{\{ {\bf \mathcal{J}}_{57}\}},$ \
$\mathcal{C}_{22}=\overline{\{ {\bf \mathcal{J}}_{58}\}},$ \
$\mathcal{C}_{23}=\overline{\{ {\bf \mathcal{J}}_{62}\}},$ \
$\mathcal{C}_{24}=\overline{\{ {\bf \mathcal{J}}_{64}\}},$ \
$\mathcal{C}_{25}=\overline{\{ {\bf \mathcal{J}}_{65}\}}.$
\end{center}

In particular, 24 of them are rigid superalgebras.
 
\end{Th}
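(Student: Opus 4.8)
The plan is to establish Theorem \ref{geo2} by the same scheme used for Theorem \ref{geo1}: first pin down the dimension of the variety $\mathscr{JS}^{2,2}$, then identify the rigid superalgebras, then exhibit enough degenerations to cover everything else, and finally verify the needed non-degenerations. By the formula \eqref{eq1} each orbit dimension is $16-\dim\mathfrak{Der}(\mathcal{J})$, so the first step is to compute $\dim\mathcal{O}(\mathcal{J})$ for all $71$ superalgebras of type $(2,2)$ from Theorem 2.8; the superalgebras listed with orbit dimension $13$ (namely $\mathcal{J}_2,\mathcal{J}_3,\mathcal{J}_5,\mathcal{J}_6,\mathcal{J}_9,\mathcal{J}_{11},\mathcal{J}_{56}$ and the generic member of the family $\mathcal{J}_{16}^t$) already certify $\dim\mathscr{JS}^{2,2}\geq 13$, and since every orbit has dimension $\leq 13$ here, equality follows once we show the union of the listed orbit closures is everything. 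I would first tabulate these orbit dimensions exactly as in the proof of Theorem \ref{geo1}, highlighting the $25$ candidate components and noting that $\mathcal{J}_{16}^t$ contributes a one-parameter family whose closure is irreducible of dimension $13$ (one must check the generic fibre has orbit dimension $12$ so that the total is $12+1=13$, which is why it appears as a component rather than being absorbed).

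The second step is to produce, for each of the remaining $46$ non-candidate superalgebras, an explicit degeneration from one of the $25$ candidates, giving a parametric basis $(E_{e_1}^t,E_{e_2}^t,E_{f_1}^t,E_{f_2}^t)$ in the notation introduced before the tables; I would organize these into a long two-column table grouped by the even part $\mathcal{J}_0\in\{\mathcal{U}_1\oplus\mathcal{U}_1,\ \mathcal{U}_2\oplus\mathcal{U}_2,\ \mathcal{U}_1\oplus\mathcal{U}_2,\ \mathcal{B}_1,\ \mathcal{B}_2,\ \mathcal{B}_3\}$, exploiting that by the Remark degenerations only move within type $(2,2)$ and by Lemma \ref{lema:inv}(2) they respect the degeneration graph of the even part (Theorem \ref{2d}), which severely restricts which $\mathcal{J}'$ can lie in $\overline{O(\mathcal{J})}$. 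In particular the $\mathcal{B}_3$-block superalgebras $\mathcal{J}_{66}$–$\mathcal{J}_{71}$ can only receive degenerations from candidates whose even part degenerates to $\mathcal{B}_3$, so $\mathcal{J}_{68}\to\mathcal{J}_{67}\to\mathcal{J}_{66}$ type chains and specializations of the $\mathcal{B}_1,\mathcal{B}_2$-blocks feed them; similarly $\mathcal{J}_1$ (the semisimple-even piece $\mathcal{U}_1\oplus\mathcal{U}_1$) dominates $\mathcal{J}_{17}$–$\mathcal{J}_{20}$ and $\mathcal{J}_{21}$ through chains in the even factor. Each proposed degeneration is then confirmed by a routine (omitted) check that $g_t\ast\mu_{\mathcal{J}}\to\mu_{\mathcal{J}'}$ as $t\to 0$.

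The third step is the non-degeneration side: to show the $25$ closures are genuinely maximal and pairwise incomparable, I would for each ordered pair of candidates $(\mathcal{J},\mathcal{J}')$ with $\dim\mathcal{O}(\mathcal{J})\geq\dim\mathcal{O}(\mathcal{J}')$ exhibit an obstruction from Lemma \ref{lema:inv} — most often part (2) (the induced degeneration $(\mathcal{J})_0\to(\mathcal{J}')_0$ fails in the two-dimensional graph of Theorem \ref{2d}), or part (4) via a Burde invariant $c_{i,j}$ that takes different values, or a dimension count $\dim(\mathcal{J}^r)_i<\dim(\mathcal{J}'^r)_i$ from part (1), or the P.I.-closure of part (5) — and in the residual cases an explicit Zariski-closed, upper-triangular-stable set $\mathcal{R}$ containing $\mu_{\mathcal{J}}$ but not $\mu_{\mathcal{J}'}$, presented in a table exactly as in the proof of Theorem \ref{geo1}. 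The main obstacle I anticipate is bookkeeping rather than conceptual: with $71$ algebras and $25$ components the degeneration table is long, and the delicate point is the family $\mathcal{J}_{16}^t$ — one must check that no other candidate degenerates onto a generic $\mathcal{J}_{16}^t$ (so it really is a separate component) while simultaneously $\mathcal{J}_{16}^t$ does specialize, as $t$ varies and then $t\to 0$ or $t\to\infty$, onto the various superalgebras with $\mathcal{J}_0\cong\mathcal{U}_1\oplus\mathcal{U}_1$ it is supposed to cover; this is where the "$25$ components, $24$ rigid" count comes from, and verifying the invariance of $\mathcal{R}$-sets under base change for the trickier pairs (e.g.\ distinguishing $\mathcal{J}_{56}$ from $\mathcal{J}_{62}$ and $\mathcal{J}_{64}$) is the part most prone to error.
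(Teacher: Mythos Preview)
Your plan follows exactly the scheme the paper uses: tabulate orbit dimensions via \eqref{eq1}, exhibit explicit parametric-basis degenerations from the $25$ candidates onto every other $\mathcal{J}_i$, and rule out degenerations among the candidates using Lemma~\ref{lema:inv} (especially part (2) together with Theorem~\ref{2d}) and ad hoc upper-triangular-stable $\mathcal{R}$-sets. One concrete slip to fix: $\mathcal{J}_1$ has trivial odd multiplication ($J_0J_1=J_1J_1=0$), so it cannot degenerate to any $\mathcal{J}_{17}$--$\mathcal{J}_{20}$; in the paper these are reached instead from $\mathcal{J}_{16}^t$, $\mathcal{J}_{24}$ and $\mathcal{J}_{42}$ (e.g.\ $\mathcal{J}_{16}^{-1}\to\mathcal{J}_{68}\to\mathcal{J}_{17}$, $\mathcal{J}_{24}\to\mathcal{J}_{23}\to\mathcal{J}_{18}$, $\mathcal{J}_{42}\to\mathcal{J}_{19}$, $\mathcal{J}_{24}\to\mathcal{J}_{20}$).
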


\begin{proof}
After carefully  checking  the dimensions of orbit closures of the more important for us superalgebras, we have 

\begin{longtable}{rcl}
      
$\dim  \mathcal{O}({\bf \mathcal{J}}_{2})=
\dim  \mathcal{O}({\bf \mathcal{J}}_{3})=
\dim  \mathcal{O}({\bf \mathcal{J}}_{5})= 
\dim  \mathcal{O}({\bf \mathcal{J}}_{6})$&$=$&$13,$ \\ 
$\dim  \mathcal{O}({\bf \mathcal{J}}_{9})=
\dim  \mathcal{O}({\bf \mathcal{J}}_{11})=

\dim  \mathcal{O}({\bf \mathcal{J}}_{56})$&$=$&$13,$ \\ 

$\dim \mathcal{O}({\bf \mathcal{J}}_{1})=
\dim \mathcal{O}({\bf \mathcal{J}}_{10})=
\dim \mathcal{O}({\bf \mathcal{J}}_{12})=
\dim \mathcal{O}({\bf \mathcal{J}}_{13})=
\dim  \mathcal{O}({\bf \mathcal{J}}_{14})=
\dim \mathcal{O}({\bf \mathcal{J}}_{16}^{t})$&$=$&$12,$ \\ 

$\dim  \mathcal{O}({\bf \mathcal{J}}_{24})=\dim \mathcal{O}({\bf \mathcal{J}}_{32})=
\dim \mathcal{O}({\bf \mathcal{J}}_{42})=
\dim \mathcal{O}({\bf \mathcal{J}}_{54})=
\dim \mathcal{O}({\bf \mathcal{J}}_{57})=
\dim \mathcal{O}({\bf \mathcal{J}}_{64})$&$=$&$12,$ \\

$\dim  \mathcal{O}({\bf \mathcal{J}}_{8})=

\dim  \mathcal{O}({\bf \mathcal{J}}_{49})=
\dim \mathcal{O}({\bf \mathcal{J}}_{62})$&$=$&$11,$\\

$\dim  \mathcal{O}({\bf \mathcal{J}}_{50})=
\dim  \mathcal{O}({\bf \mathcal{J}}_{65})$&$=$&$10,$\\

$\dim  \mathcal{O}({\bf \mathcal{J}}_{58})$&$=$&$4.$
\end{longtable}   

If $E_{e_1}^t, E_{e_2}^t, E_{f_1}^t, E_{f_2}^t$ is a {\it parametric basis} for ${\bf A}\to {\bf B}$, then we denote a degeneration by ${\bf A}\xrightarrow{(E_{e_1}^t, E_{e_2}^t, E_{f_1}^t, E_{f_2}^t)} {\bf B}$. 

\begin{longtable}{lcl|lcl|lcl} \hline

$\bf \mathcal{J}_{8}$ & $\xrightarrow{ (e_1,e_2,tf_1, f_2)}$ & $\bf \mathcal{J}_{7}$ & 

$\bf \mathcal{J}_{16}^{0}$ & $\xrightarrow{ (e_1, e_2, f_1, tf_2)}$ & $\bf \mathcal{J}_{15}$

&
$\bf \mathcal{J}_{68}$ & $\xrightarrow{ (e_1, \frac{1}{t}e_2, f_1,f_2)}$ & $\bf \mathcal{J}_{17}$ \\  \hline 

$\bf \mathcal{J}_{23}$ & $\xrightarrow{ (te_1, e_2, f_1, f_2)}$ & $\bf \mathcal{J}_{18}$ &

$\bf \mathcal{J}_{24}$ & $\xrightarrow{ (te_1, e_2, f_1, f_2)}$ & $\bf \mathcal{J}_{20}$ & 

$\bf \mathcal{J}_{22}$ & $\xrightarrow{ (e_1, e_2, tf_1,f_2)}$ & $\bf \mathcal{J}_{21}$ 

 \\  \hline

$\bf \mathcal{J}_{24}$ & $\xrightarrow{ (e_1, t e_2, tf_1, f_2)}$ & $\bf \mathcal{J}_{22}$ &

$\bf \mathcal{J}_{24}$ & $\xrightarrow{ (e_1, e_2, tf_1, tf_2)}$ & $\bf \mathcal{J}_{23}$

&

$\bf \mathcal{J}_{13}$ & $\xrightarrow{ (e_1, t e_2, f_1, f_2)}$ & $\bf \mathcal{J}_{25}$  \\  \hline

$\bf \mathcal{J}_{14}$ & $\xrightarrow{ (e_1, te_2, f_1, f_2)}$ & $\bf \mathcal{J}_{26}$

&

 $\bf \mathcal{J}_{30}$ & $\xrightarrow{ (e_1,te_2,f_1, tf_2)}$ & $\bf \mathcal{J}_{28}$ &

$\bf \mathcal{J}_{30}$ & $\xrightarrow{ (e_1, \frac{1}{t}e_2, f_1, f_2)}$ & $\bf \mathcal{J}_{29}$

 \\  \hline
 
  $\bf \mathcal{J}_{16}^{t}$ & $\xrightarrow{ (e_1, te_2, f_1, f_2)}$ & $\bf \mathcal{J}_{30}$ &
  
 $\bf \mathcal{J}_{32}$ & $\xrightarrow{ (e_1, \frac{1}{t}e_2, tf_1, f_2)}$ & $\bf \mathcal{J}_{31}$

&

$\bf \mathcal{J}_{35}$ & $\xrightarrow{ (e_1,e_2,f_1, tf_2)}$ & $\bf \mathcal{J}_{34}$ \\  \hline
  
$\bf \mathcal{J}_{1}$ & $\xrightarrow{ (e_1+e_2, te_2, f_1, f_2)}$ & $\bf \mathcal{J}_{36}$

&

 $\bf \mathcal{J}_{40}$ & $\xrightarrow{ (e_1, e_2, f_1, tf_2)}$ & $\bf \mathcal{J}_{39}$ &
  
$\bf \mathcal{J}_{42}$ & $\xrightarrow{ (e_1, te_2, tf_1, f_2)}$ & $\bf \mathcal{J}_{40}$ 

\\  \hline

$\bf \mathcal{J}_{42}$ & $\xrightarrow{ (e_1,e_2,tf_1, tf_2)}$ & $\bf \mathcal{J}_{41}$ &
  
$\bf \mathcal{J}_{5}$ & $\xrightarrow{ (e_1+e_2, te_2, f_2, f_1)}$ & $\bf \mathcal{J}_{43}$

&

$\bf \mathcal{J}_{45}$ & $\xrightarrow{ (e_1, e_2, tf_1,f_2)}$ & $\bf \mathcal{J}_{44}$ \\  \hline 
  
$\bf \mathcal{J}_{47}$ & $\xrightarrow{ (e_1, te_2, f_1, tf_2)}$ & $\bf \mathcal{J}_{45}$

&

$\bf \mathcal{J}_{47}$ & $\xrightarrow{ (e_1, \frac{1}{t}e_2, f_1, f_2)}$ & $\bf \mathcal{J}_{46}$ &

$\bf \mathcal{J}_{49}$ & $\xrightarrow{ (e_1, e_2, tf_1, tf_2)}$ & $\bf \mathcal{J}_{48}$

\\  \hline

$\bf \mathcal{J}_{52}$ & $\xrightarrow{ (e_1, e_2, f_1, tf_2)}$ & $\bf \mathcal{J}_{51}$ &

$\bf \mathcal{J}_{54}$ & $\xrightarrow{ (e_1, e_2, tf_1, tf_2)}$ & $\bf \mathcal{J}_{53}$ 

&

$\bf \mathcal{J}_{56}$ & $\xrightarrow{ (e_1, e_2, tf_1, tf_2)}$ & $\bf \mathcal{J}_{55}$
\\  \hline 

$\bf \mathcal{J}_{60}$ & $\xrightarrow{ (e_1, e_2, f_1, tf_2)}$ & $\bf \mathcal{J}_{59}$
&
$\bf \mathcal{J}_{64}$ & $\xrightarrow{ (e_1, te_2,tf_1, f_2)}$ & $\bf \mathcal{J}_{60}$ &

$\bf \mathcal{J}_{62}$ & $\xrightarrow{ (e_1, e_2, tf_1, tf_2)}$ & $\bf \mathcal{J}_{61}$

\\  \hline

$\bf \mathcal{J}_{64}$ & $\xrightarrow{ (e_1, e_2,tf_1,tf_2)}$ & $\bf \mathcal{J}_{63}$ &

$\bf \mathcal{J}_{67}$ & $\xrightarrow{ (e_1, e_2, f_1, tf_2)}$ & $\bf \mathcal{J}_{66}$

&

$\bf \mathcal{J}_{71}$ & $\xrightarrow{ (e_1, e_2, tf_1, tf_2)}$ & $\bf \mathcal{J}_{70}$  

\\  \hline

\end{longtable}

\begin{longtable}{lcl}
\hline
$\bf \mathcal{J}_{6}$ & $\xrightarrow{ (e_1, e_2+tf_1+t^2f_2,f_1+tf_2,tf_2)}$ & $\bf \mathcal{J}_{4}$\\
\hline
$\bf \mathcal{J}_{42}$ & $\xrightarrow{ (t^{\frac23}e_2 - \frac{t^{\frac{13}{12}}}{2}f_2, te_1+\frac{2+t}{2}e_2+ t^{\frac{7}{12}}f_2, t^{\frac13} f_1, t^{\frac13}f_2)}$ & $\bf \mathcal{J}_{19}$ \\
\hline$\bf \mathcal{J}_{28}$ & $\xrightarrow{ (e_1+f_1+f_2, \frac{1}{t}e_2, f_1+f_2, f_2)}$ & $\bf \mathcal{J}_{27}$ \\
\hline
$\bf \mathcal{J}_{5}$ & $\xrightarrow{ (\frac{t}{3}e_1+(1+\frac{2t}{3})e_2, \sqrt{t}e_1+\sqrt{t}e_2, \sqrt{t}f_1+(2t-3)f_2, f_1+4\sqrt{t}f_2)}$ & $\bf \mathcal{J}_{33}$ \\
\hline
$\bf \mathcal{J}_{6}$ & $\xrightarrow{ (-te_1+(1+t)e_2, \sqrt{t}e_1+\sqrt{t}e_2,f_1, (1+2t)f_2)}$ & $\bf \mathcal{J}_{35}$\\
\hline
$\bf \mathcal{J}_{3}$ & $\xrightarrow{ ((1-t)e_1+(1+t)e_2, \frac{(t-1)\sqrt{t}}{1+t}e_1+\sqrt{t}e_2,f_1, f_2)}$ & $\bf \mathcal{J}_{37}$\\
\hline
$\bf \mathcal{J}_{2}$ & $\xrightarrow{ ((1+2t-t^2)e_1+(1+t^2)e_2, \frac{\sqrt{t}(-1-2 t+t^2)}{1+t^2}e_1+\sqrt{t}e_2, f_1, f_2)}$ & $\bf \mathcal{J}_{38}$ \\
\hline
$\bf \mathcal{J}_{16}^{1+t}$ & $\xrightarrow{e_1+e_2+t^2f_2, \ te_2, \ \frac{1}{t}f_1, \ tf_2}$ & $\bf \mathcal{J}_{47}$\\
\hline
$\bf \mathcal{J}_{54}$ & $\xrightarrow{ (e_1, te_2+f_1+t^3f_2, \frac{1}{t}f_1, t^2f_2)}$ & $\bf \mathcal{J}_{52}$\\
\hline

$\bf \mathcal{J}_{68}$ & $\xrightarrow{ (te_1+e_2, t^2 e_2, -t^3 f_1, f_2)}$ & $\bf \mathcal{J}_{67}$\\
\hline
$\bf \mathcal{J}_{16}^{-1}$ & $\xrightarrow{ (te_1-te_2, \frac{2t^2}{1+t}e_2, f_1, tf_2)}$ & $\bf \mathcal{J}_{68}$ \\
\hline
$\bf \mathcal{J}_{9}$ & $\xrightarrow{ (-\frac{t}{\sqrt{2}}e_1+\frac{t}{\sqrt{2}}e_2, t^2e_2, f_1+2f_2, t^2f_2)}$ & $\bf \mathcal{J}_{69}$ \\
\hline
$\bf \mathcal{J}_{24}$ & $\xrightarrow{ (-t^2e_1+e_2+t^2f_1, t^2e_2+2t^2f_2, tf_1, tf_2)}$ & $\bf \mathcal{J}_{71}$\\
\hline
\end{longtable}

Below we list all the important reasons for necessary non-degenerations. All other nondegenerations which are not in this table, can be inferred from Theorem~\ref{2d} and Lemma~\ref{lema:inv}(2). Since the even parts of $\mathcal{J}_{56}, \mathcal{J}_{54}, \mathcal{J}_{57}, \mathcal{J}_{64}, \mathcal{J}_{50}, \mathcal{J}_{65}, \mathcal{J}_{58}$ superalgebras coincide with $\mathcal{B}_2$, we conclude that these superalgebras do not degenerate to others whose even part is not isomorphic to $\mathcal{B}_2$ and vice versa.

\begin{longtable}{lcl|l}
\hline
    \multicolumn{4}{c}{Non-degenerations reasons} \\
\hline

$\bf \mathcal{J}_{56}$ & $\not \rightarrow  $ & 
$\begin{array}{lllll}
\bf \mathcal{J}_{50}, \bf \mathcal{J}_{54}, \bf \mathcal{J}_{57}, \\
\bf \mathcal{J}_{58}, \bf \mathcal{J}_{62}, \bf \mathcal{J}_{64}, \bf \mathcal{J}_{65}
\end{array}$  
& 
$\mathcal R=\left\{\begin{array}{lllll}
\mbox{$ c_{41}^4-c_{12}^2-c_{13}^3=0,  c_{24}^3c_{43}^2-(c_{24}^2)^2=0,$}\\[1mm]
\mbox{$c_{22}^2=0, \ c_{11}^1-c_{12}^2-c_{13}^3-c_{14}^4=0$}\\
\end{array}\right\}
$\\
\hline

$\bf \mathcal{J}_{54}$ & $\not \rightarrow  $ & 
$\bf \mathcal{J}_{50}, \bf \mathcal{J}_{58}, \bf \mathcal{J}_{62}, \bf \mathcal{J}_{65}$ 
& 
$\mathcal R=\left\{\begin{array}{lllll}
\mbox{$c_{12}^1=0, \ c_{11}^1-c_{12}^2-c_{13}^3-c_{14}^4=0$}
\end{array}\right\}
$\\
\hline

$\bf \mathcal{J}_{57}$ & $\not \rightarrow  $ & 
$\bf \mathcal{J}_{50}, \bf \mathcal{J}_{58}, \bf \mathcal{J}_{62}, \bf \mathcal{J}_{65}$ 
& 
$\mathcal R=\left\{\begin{array}{lllll}
\mbox{$c_{12}^1=0, \ c_{13}^3=0, \ c_{11}^1-c_{14}^4=0$}
\end{array}\right\}
$\\
\hline

$\bf \mathcal{J}_{64}$ & $\not \rightarrow  $ & 
$\bf \mathcal{J}_{50}, \bf \mathcal{J}_{58}, \bf \mathcal{J}_{62}, \bf \mathcal{J}_{65}$ 
& 
$\mathcal R=\left\{\begin{array}{lllll}
c_{12}^1=0, \ 2c_{11}^1-c_{12}^2-c_{13}^3-c_{14}^4=0,\\[1mm] 
c_{13}^3+c_{31}^3=c_{11}^1, \ c_{34}^2c_{24}^3+(c_{24}^2)^2=0\\[1mm] 
\end{array}\right\}
$\\
\hline

$\bf \mathcal{J}_{62}$ & $\not \rightarrow  $ & 
$\bf \mathcal{J}_{50}, \bf \mathcal{J}_{58}, \bf \mathcal{J}_{65}$ 
& 
$\mathcal R=\left\{\begin{array}{lllll}
c_{12}^1=0, \ 2c_{11}^1-c_{12}^2-c_{13}^3-c_{14}^4=0
\end{array}\right\}
$\\
\hline

$\bf \mathcal{J}_{50}$ & $\not \rightarrow  $ & 
$\bf \mathcal{J}_{58}$ 
& 
$\mathcal R=\left\{\begin{array}{lllll}
\mbox{$c_{12}^1=0, \ c_{13}^3=0$}
\end{array}\right\}
$\\
\hline

$\bf \mathcal{J}_{65}$ & $\not \rightarrow  $ & 
$\bf \mathcal{J}_{58}$ 
& 
$\mathcal R=\left\{\begin{array}{lllll}
\mbox{$c_{12}^1=0, \ c_{11}^1-c_{13}^3=0$}
\end{array}\right\}
$\\
\hline

$\bf \mathcal{J}_{2}$ & $\not \rightarrow  $ & 
$\begin{array}{lllll}
\bf \mathcal{J}_{1}, \bf \mathcal{J}_{8}, \bf \mathcal{J}_{10}, \\[1mm] 
\bf \mathcal{J}_{12}, \bf \mathcal{J}_{13}, \bf \mathcal{J}_{14}, \\[1mm] 
\bf \mathcal{J}_{16}, \bf \mathcal{J}_{24}, \bf \mathcal{J}_{32}, \\[1mm] 
\bf \mathcal{J}_{42}, \bf \mathcal{J}_{49}\\[1mm] 
\end{array}$ 
& 
$\mathcal R=\mbox{$\left\{\begin{array}{lllll}
A_1A_3\subset A_4, \ c_{12}^2=c_{14}^4, \ c_{24}^4=c_{22}^2, \\[1mm]
c_{11}^2 c_{23}^4=c_{13}^4(c_{14}^4-c_{11}^1)
\end{array}\right\}$}
$\\
\hline

$\bf \mathcal{J}_{3}$ & $\not \rightarrow  $ & 
$\begin{array}{lllll}
\bf \mathcal{J}_{1}, \bf \mathcal{J}_{8}, \bf \mathcal{J}_{10}, \\[1mm] 
\bf \mathcal{J}_{12}, \bf \mathcal{J}_{13}, \bf \mathcal{J}_{14}, \\[1mm] 
\bf \mathcal{J}_{16}, \bf \mathcal{J}_{24}, \bf \mathcal{J}_{32}, \\[1mm] 
\bf \mathcal{J}_{42}, \bf \mathcal{J}_{49}\\[1mm] 
\end{array}$ 
& 
$\mathcal R=\left\{\begin{array}{lllll}
A_1A_3\subset A_4, \ c_{12}^2=2c_{14}^4, \ 2c_{24}^4=c_{22}^2, \\[1mm]
c_{11}^2 c_{23}^4=c_{13}^4(2c_{14}^4-c_{11}^1)
\end{array}\right\}
$\\
\hline

$\bf \mathcal{J}_{5}$ & $\not \rightarrow  $ & 
$\begin{array}{lllll}
\bf \mathcal{J}_{1}, \bf \mathcal{J}_{8}, \bf \mathcal{J}_{10}, \\[1mm] 
\bf \mathcal{J}_{12}, \bf \mathcal{J}_{13}, \bf \mathcal{J}_{14}, \\[1mm] 
\bf \mathcal{J}_{16}, \bf \mathcal{J}_{24}, \bf \mathcal{J}_{32}, \\[1mm] 
\bf \mathcal{J}_{42}, \bf \mathcal{J}_{49}\\[1mm] 
\end{array}$ 
& 
$\mathcal R=\left\{\begin{array}{lllll}
2c_{14}^4=c_{12}^2, \ 2c_{24}^4=c_{22}^2, \ c_{12}^1=0, \ c_{14}^3=0, \ c_{34}^2=0,\\[1mm]
c_{13}^3=c_{12}^2, \ c_{23}^3=c_{22}^2, \ c_{22}^1=0, \ c_{24}^3=0, \ c_{34}^1=0,\\[1mm]
c_{22}^2 c_{11}^2+(c_{11}^1-c_{12}^2)c_{12}^2=0,  \ c_{11}^2 c_{23}^4=c_{13}^4(2c_{14}^4-c_{11}^1)\\[1mm]
\end{array}\right\}
$\\
\hline

$\bf \mathcal{J}_{6}$ & $\not \rightarrow  $ & 
$\begin{array}{lllll}
\bf \mathcal{J}_{1}, \bf \mathcal{J}_{8}, \bf \mathcal{J}_{10}, \\[1mm] 
\bf \mathcal{J}_{12}, \bf \mathcal{J}_{13}, \bf \mathcal{J}_{14}, \\[1mm] 
\bf \mathcal{J}_{16}, \bf \mathcal{J}_{24}, \bf \mathcal{J}_{32}, \\[1mm] 
\bf \mathcal{J}_{42}, \bf \mathcal{J}_{49}\\[1mm] 
\end{array}$ 
& 
$\mathcal R=\left\{\begin{array}{lllll}
c_{22}^1=0, \ c_{12}^1=0, \ c_{34}^1=0, \ c_{24}^4=2c_{22}^2-c_{23}^3\\[1mm]
c_{23}^4+c_{32}^4=0, \ c_{24}^3+c_{42}^3=0,\ 
c_{34}^2 c_{23}^4=(c_{34}^4)^2, \\[1mm]
c_{22}^2 (c_{34}^2)^2 c_{11}^2+(c_{14}^4 c_{34}^2 - c_{14}^2 c_{34}^4) (c_{11}^1 c_{34}^2 - c_{14}^4 c_{34}^2 + c_{14}^2 c_{34}^4)=0 \\[1mm]
\end{array}\right\}
$\\
\hline

$\bf \mathcal{J}_{9}$ & $\not \rightarrow  $ & 
$\begin{array}{lllll}
\bf \mathcal{J}_{1}, \bf \mathcal{J}_{8}, \bf \mathcal{J}_{10}, \\[1mm] 
\bf \mathcal{J}_{12}, \bf \mathcal{J}_{13}, \bf \mathcal{J}_{14}, \\[1mm] 
\bf \mathcal{J}_{16}, \bf \mathcal{J}_{24}, \bf \mathcal{J}_{32}, \\[1mm] 
\bf \mathcal{J}_{42}, \bf \mathcal{J}_{49} \\[1mm] 
\end{array}$ 
& 
$\mathcal R=\left\{\begin{array}{lllll}
\mbox{$A_1A_3 \subset A_4, \ c_{12}^1=0, \ 2c_{24}^4=c_{22}^2, \ 2c_{14}^4=c_{11}^1+c_{12}^2$}
\end{array}\right\}
$\\
\hline

$\bf \mathcal{J}_{11}$ & $\not \rightarrow  $ & 
$\begin{array}{lllll}
\bf \mathcal{J}_{1}, \bf \mathcal{J}_{8}, \bf \mathcal{J}_{10}, \\[1mm] 
\bf \mathcal{J}_{12}, \bf \mathcal{J}_{13}, \bf \mathcal{J}_{14}, \\[1mm] 
\bf \mathcal{J}_{16}, \bf \mathcal{J}_{24}, \bf \mathcal{J}_{32}, \\[1mm] 
\bf \mathcal{J}_{42}, \bf \mathcal{J}_{49} \\[1mm] 
\end{array}$ 
& 
$\mathcal R=\left\{\begin{array}{lllll}
c_{12}^1=0, \ c_{14}^3=0, \  c_{34}^1=0, \ c_{23}^3=c_{22}^2, \ 2c_{14}^4=c_{11}^1+c_{12}^2,\\[1mm]
c_{22}^1=0, \ c_{24}^3=0, \ c_{34}^2=0, \ c_{13}^3=c_{12}^2,  \  2c_{24}^4=c_{22}^2\\[1mm]
\end{array}\right\}
$\\
\hline

$\bf \mathcal{J}_{1}$ & $\not \rightarrow  $ & 
$\bf \mathcal{J}_{8}, \bf \mathcal{J}_{49}$ 
& 
$\mathcal R=\left\{\begin{array}{lllll}
A_1A_3=0
\end{array}\right\}
$\\
\hline

$\bf \mathcal{J}_{10}$ & $\not \rightarrow  $ & 
$\bf \mathcal{J}_{8}, \bf \mathcal{J}_{49}$ 
& 
$\mathcal R=\left\{\begin{array}{lllll}
\mbox{$c_{22}^1=0,\ c_{34}^1=0, \ c_{34}^2=0, \ 2c_{23}^3=c_{22}^2$}
\end{array}\right\}
$\\
\hline

$\bf \mathcal{J}_{12}$ & $\not \rightarrow  $ & 
$\bf \mathcal{J}_{8}, \bf \mathcal{J}_{49}$ 
& 
$\mathcal R=\left\{\begin{array}{lllll}
\mbox{$c_{22}^1=0, \ c_{34}^1=0, \ c_{34}^2=0$}
\end{array}\right\}
$\\
\hline

$\bf \mathcal{J}_{13}$ & $\not \rightarrow  $ & 
$\bf \mathcal{J}_{8}, \bf \mathcal{J}_{49}$ 
& 
$\mathcal R=\left\{\begin{array}{lllll}
\mbox{$c_{12}^1=0,\ c_{34}^1=0, \ c_{34}^2=0, \ c_{22}^1=0, \ 2c_{13}^3=c_{12}^2, \ 2c_{14}^4=c_{11}^1+c_{12}^2$}
\end{array}\right\}
$\\
\hline

$\bf \mathcal{J}_{14}$ & $\not \rightarrow  $ & 
$\bf \mathcal{J}_{8}, \bf \mathcal{J}_{49}$ 
& 
$\mathcal R=\left\{\begin{array}{lllll}
\mbox{$c_{22}^1=0, \ c_{34}^1=0, \ c_{34}^2=0, \  c_{23}^3=c_{22}^2$}
\end{array}\right\}
$\\
\hline

$\bf \mathcal{J}_{16}$ & $\not \rightarrow  $ & 
$\bf \mathcal{J}_{8}$ 
& 
$\mathcal R=\left\{\begin{array}{lllll}
c_{23}^{3}+c_{24}^{4}=c_{22}^2+c_{12}^1,\ 
c_{11}^{1}+c_{12}^{2}+c_{14}^{4}=c_{13}^{3}+2c_{31}^{3}\\
\end{array}\right\}
$\\
\hline

$\bf \mathcal{J}_{16}$ & $\not \rightarrow  $ & 
$\bf \mathcal{J}_{49}$ 
& 
$\mathcal R=\left\{\begin{array}{lllll}
c_{22}^1=0, \ c_{12}^1+c_{21}^1=0, \ c_{23}^{4}+c_{32}^{4}=0 ,\ c_{24}^3+c_{42}^3=0\\

\end{array}\right\}
$\\
\hline

$\bf \mathcal{J}_{42}$ & $\not \rightarrow  $ & 
$\bf \mathcal{J}_{49}$ 
& 
$\mathcal R=\left\{\begin{array}{lllll}
\mbox{$c_{22}^1=0, \ c_{22}^2=0, \ c_{14}^2c_{24}^2=-c_{14}^3c_{34}^2, \ c_{31}^3=c_{11}^1-c_{13}^3$}
\end{array}\right\}
$\\
\hline

\end{longtable}

Here $c_{ij}^{k}$ coefficients are structural constants in the $x_1=e_1, \ x_2=e_2, \ x_3=f_1, \ x_4=f_2$ basis.

\end{proof}

\begin{Th}\label{geo3}
The variety of $4$-dimensional Jordan superalgebras of type  $(3,1)$ has 
dimension  $15$ and it has  $21$  irreducible components defined by  
\begin{center}
$\mathcal{C}_1=\overline{\{ {\bf \mathfrak{J}}_{1}\}},$ \
$\mathcal{C}_2=\overline{\{ {\bf \mathfrak{J}}_{2}\}},$ \ 
$\mathcal{C}_3=\overline{\{ {\bf \mathfrak{J}}_{3}\}},$ \  
$\mathcal{C}_4=\overline{\{ {\bf \mathfrak{J}}_{4}\}},$ \
$\mathcal{C}_5=\overline{\{ {\bf \mathfrak{J}}_{42}\}},$ \\

$\mathcal{C}_6=\overline{\{ {\bf \mathfrak{J}}_{21}\}},$ \
$\mathcal{C}_7=\overline{\{ {\bf \mathfrak{J}}_{51}\}},$ \ 
$\mathcal{C}_8=\overline{\{ {\bf \mathfrak{J}}_{53}\}},$ \  
$\mathcal{C}_9=\overline{\{ {\bf \mathfrak{J}}_{57}\}},$ \
$\mathcal{C}_{10}=\overline{\{ {\bf \mathfrak{J}}_{22}\}},$ \ $\mathcal{C}_{11}=\overline{\{ {\bf \mathfrak{J}}_{23}\}},$ \
$\mathcal{C}_{12}=\overline{\{ {\bf \mathfrak{J}}_{26}\}},$ \
$\mathcal{C}_{13}=\overline{\{ {\bf \mathfrak{J}}_{43}\}},$ \
$\mathcal{C}_{14}=\overline{\{ {\bf \mathfrak{J}}_{54}\}},$ \
$\mathcal{C}_{15}=\overline{\{ {\bf \mathfrak{J}}_{58}\}},$ \
$\mathcal{C}_{16}=\overline{\{ {\bf \mathfrak{J}}_{55}\}},$ \
$\mathcal{C}_{17}=\overline{\{ {\bf \mathfrak{J}}_{47}\}},$ \
$\mathcal{C}_{18}=\overline{\{ {\bf \mathfrak{J}}_{48}\}},$ \
$\mathcal{C}_{19}=\overline{\{ {\bf \mathfrak{J}}_{49}\}},$ \
$\mathcal{C}_{20}=\overline{\{ {\bf \mathfrak{J}}_{24}\}},$ \
$\mathcal{C}_{21}=\overline{\{ {\bf \mathfrak{J}}_{25}\}},$ \

\end{center}

In particular, all of them are rigid superalgebras.
 
\end{Th}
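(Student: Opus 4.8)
The plan is to follow the scheme already used for Theorems~\ref{geo1} and~\ref{geo2}. Since a type $(3,1)$ superalgebra has one-dimensional odd part, $\jor_1\jor_1=0$ holds automatically, so such a superalgebra is nothing but a $3$-dimensional Jordan algebra $\jor_0$ together with a linear form $e\mapsto\lambda_e$ recording the action of $\jor_0$ on $\jor_1$; consequently the algebraic classification list breaks into blocks indexed by the isomorphism type of $\jor_0$, and it is these blocks one compares. First I would compute, via~(\ref{eq1}), the dimension of every orbit $O(\mathfrak{J}_i)$; these numbers are the entries of the ``Orbit'' column above. The maximum value $15$ is attained only by $\mathfrak{J}_1=\U_1\oplus\U_1\oplus\U_1\oplus\S_1^1$. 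Because this type contains no one-parameter family, $\mathscr{JS}^{3,1}$ is a finite union of orbits and therefore equals $\bigcup_i\overline{O(\mathfrak{J}_i)}$; in particular $\dim\mathscr{JS}^{3,1}=15$, and its irreducible components are exactly the maximal members of $\{\overline{O(\mathfrak{J}_i)}\}$ under inclusion, equivalently the orbit closures of the rigid superalgebras. So everything reduces to determining the degeneration order.

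For the degenerations I would list, for every primary degeneration $\mathfrak{J}_i\to\mathfrak{J}_j$, an explicit parametric basis $(E_{e_1}^t,E_{e_2}^t,E_{e_3}^t,E_f^t)$ in the style of the tables of the two preceding theorems. Most are essentially forced: a majority of the type $(3,1)$ superalgebras decompose, as graded algebras, into direct sums of the indecomposable (super)algebras of dimension at most three from Tables~\ref{tb:joral}--\ref{tb:supal}, so the corresponding degenerations are inherited from the degeneration graphs of Theorems~\ref{2d} and~\ref{3d} by degenerating one summand at a time; only a short list of genuinely new parametric bases is needed, namely those whose source or target is one of the indecomposable superalgebras such as $\mathfrak{J}_{35},\mathfrak{J}_{36},\mathfrak{J}_{43},\mathfrak{J}_{45},\mathfrak{J}_{46},\mathfrak{J}_{51},\mathfrak{J}_{52},\mathfrak{J}_{54},\mathfrak{J}_{55}$, which are written down and verified directly.

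For the non-degenerations I would first dispatch the easy pairs with Lemma~\ref{lema:inv}: part~(1) on the graded dimensions $\dim(J^r)_i$; part~(2), which through Theorem~\ref{3d} rules out every degeneration between superalgebras with incomparable even parts and hence separates nearly all pairs lying in distinct blocks; part~(4), the Burde invariants $c_{i,j}$; and part~(5), which forbids degenerating a P.I.\ superalgebra to one violating the identity. The remaining obstructions are handled as before: for each such pair one produces a closed set $\mathcal{R}\subset\Hom(V\otimes V,V)$, cut out by polynomial equations in the structure constants $c_{ij}^k$, stable under the subgroup of upper-triangular base changes, which contains the structure constants of $\mathfrak{J}_i$ in the distinguished basis but contains those of $\mathfrak{J}_j$ in no basis. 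This I expect to be the main obstacle of the proof: the delicate pairs are those sitting in one block and sharing all the coarse invariants, typically the superalgebras built over $\T_5$, $\T_9$, $\T_{10}$ and over $\B_2\oplus\U_1$, where the separating equations are non-obvious quadratic relations among the $c_{ij}^k$ whose triangular-stability has to be checked explicitly.

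Assembling the degeneration graph, one reads off that $\mathfrak{J}_1,\mathfrak{J}_2,\mathfrak{J}_3,\mathfrak{J}_4,\mathfrak{J}_{21},\mathfrak{J}_{22},\mathfrak{J}_{23},\mathfrak{J}_{24},\mathfrak{J}_{25},\mathfrak{J}_{26},\mathfrak{J}_{42},\mathfrak{J}_{43},\mathfrak{J}_{47},\mathfrak{J}_{48},\mathfrak{J}_{49},\mathfrak{J}_{51},\mathfrak{J}_{53},\mathfrak{J}_{54},\mathfrak{J}_{55},\mathfrak{J}_{57},\mathfrak{J}_{58}$ are precisely the maximal superalgebras, and that every other $\mathfrak{J}_i$ degenerates from some member of this list. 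Therefore $\mathscr{JS}^{3,1}=\mathcal{C}_1\cup\dots\cup\mathcal{C}_{21}$ with the $\mathcal{C}_k$ as in the statement; each $\mathcal{C}_k$ is irreducible, being the closure of a single orbit, and no $\mathcal{C}_k$ is contained in another by the non-degenerations just established, so they are exactly the irreducible components. In particular all twenty-one superalgebras are rigid, and $\dim\mathscr{JS}^{3,1}=\max_k\dim\mathcal{C}_k=\dim\overline{O(\mathfrak{J}_1)}=15$.
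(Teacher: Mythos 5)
Your proposal follows essentially the same route as the paper's proof: compute the orbit dimensions via~(\ref{eq1}), exhibit explicit parametric bases for the primary degenerations, rule out the remaining ones using Lemma~\ref{lema:inv}(2) together with Theorem~\ref{3d} and ad hoc Zariski-closed sets $\mathcal{R}$ of structure-constant equations, and conclude that the $21$ maximal orbit closures are exactly the irreducible components, all rigid, with $\dim\mathscr{JS}^{3,1}=15$. The only remark is that the substance of the paper's argument is precisely the explicit parametric bases and the separating sets $\mathcal{R}$ (with their stability under triangular base changes), which your plan correctly identifies as the delicate part but leaves to direct verification; your structural observations ($\jor_1\jor_1=0$ in type $(3,1)$, summand-wise inheritance of degenerations, finiteness of orbits forcing components to be orbit closures of rigid superalgebras) are sound and consistent with what the paper does.
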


\begin{proof}
After carefully  checking  the dimensions of orbit closures of the more important for us superalgebras, we have 

\begin{longtable}{rcl}
      
$\dim  \mathcal{O}({\bf \mathfrak{J}}_{1})$&$=$&$15,$ \\ 
$\dim  \mathcal{O}({\bf \mathfrak{J}}_{2})=
\dim  \mathcal{O}({\bf \mathfrak{J}}_{3})=
\dim  \mathcal{O}({\bf \mathfrak{J}}_{4})= 
\dim  \mathcal{O}({\bf \mathfrak{J}}_{42})$&$=$&$14,$ \\ 

$\dim  \mathcal{O}({\bf \mathfrak{J}}_{21})=
\dim  \mathcal{O}({\bf \mathfrak{J}}_{51})=
\dim  \mathcal{O}({\bf \mathfrak{J}}_{53})=
\dim  \mathcal{O}({\bf \mathfrak{J}}_{24})=
\dim  \mathcal{O}({\bf \mathfrak{J}}_{57})$&$=$&$13,$ \\ 

$\dim \mathcal{O}({\bf \mathfrak{J}}_{22})=
\dim \mathcal{O}({\bf \mathfrak{J}}_{23})=
\dim \mathcal{O}({\bf \mathfrak{J}}_{26})=
\dim \mathcal{O}({\bf \mathfrak{J}}_{43})=
\dim \mathcal{O}({\bf \mathfrak{J}}_{54})=
\dim \mathcal{O}({\bf \mathfrak{J}}_{58})$&$=$&$12,$ \\

$\dim  \mathcal{O}({\bf \mathfrak{J}}_{55})$&$=$&$11,$\\

$\dim  \mathcal{O}({\bf \mathfrak{J}}_{25})$&$=$&$10,$\\

$\dim  \mathcal{O}({\bf \mathfrak{J}}_{47})=
\dim  \mathcal{O}({\bf \mathfrak{J}}_{48})$&$=$&$9,$\\

$\dim  \mathcal{O}({\bf \mathfrak{J}}_{49})$&$=$&$4.$
\end{longtable}   

If $E_{e_1}^t, E_{e_2}^t, E_{e_3}^t, E_{f}^t$ is a {\it parametric basis} for ${\bf A}\to {\bf B}$, then we denote a degeneration by ${\bf A}\xrightarrow{(E_{f_1}^t, E_{f_2}^t, E_{f_3}^t, E_{e}^t)} {\bf B}$.

\begin{longtable}{lcl|lcl} \hline

$\bf \mathfrak{J}_{1}$ & $\xrightarrow{ (e_1, e_2, te_3, f)}$ & $\bf \mathfrak{J}_{5}$ & 

$\bf \mathfrak{J}_{2}$ & $\xrightarrow{ (e_1, e_2, te_3, f)}$ & $\bf \mathfrak{J}_{6}$

\\  \hline

$\bf \mathfrak{J}_{3}$ & $\xrightarrow{ (e_1, e_2, te_3, f)}$ & $\bf \mathfrak{J}_{7}$ & 

$\bf \mathfrak{J}_{4}$ & $\xrightarrow{ (e_1, e_2, te_3, f)}$ & $\bf \mathfrak{J}_{8}$
\\  \hline

$\bf \mathfrak{J}_{1}$ & $\xrightarrow{ (e_1, te_2, te_3, f)}$ & $\bf \mathfrak{J}_{9}$ & 

$\bf \mathfrak{J}_{2}$ & $\xrightarrow{ (e_1, te_2, te_3, f)}$ & $\bf \mathfrak{J}_{10}$
\\  \hline

$\bf \mathfrak{J}_{3}$ & $\xrightarrow{ (e_1, te_2, te_3, f)}$ & $\bf \mathfrak{J}_{11}$ & 

$\bf \mathfrak{J}_{1}$ & $\xrightarrow{ (e_1+e_2, te_2, e_3, f)}$ & $\bf \mathfrak{J}_{12}$
\\  \hline

$\bf \mathfrak{J}_{2}$ & $\xrightarrow{ (e_2+e_3, te_2, e_1, f)}$ & $\bf \mathfrak{J}_{13}$ & 
$\bf \mathfrak{J}_{3}$ & $\xrightarrow{ (e_2+e_3, te_2, e_1, f)}$ & $\bf \mathfrak{J}_{14}$

\\  \hline

$\bf \mathfrak{J}_{2}$ & $\xrightarrow{ (e_1+e_2, te_2, e_3, f)}$ & $\bf \mathfrak{J}_{15}$ & 
$\bf \mathfrak{J}_{3}$ & $\xrightarrow{ (e_1+e_2, te_2, e_3, f)}$ & $\bf \mathfrak{J}_{16}$

\\  \hline
$\bf \mathfrak{J}_{4}$ & $\xrightarrow{ (e_2+e_3, te_2, e_1, f)}$ & $\bf \mathfrak{J}_{17}$
 & 
$\bf \mathfrak{J}_{1}$ & $\xrightarrow{ (e_1+e_2, te_2, te_3, f)}$ & $\bf \mathfrak{J}_{18}$

\\  \hline
$\bf \mathfrak{J}_{2}$ & $\xrightarrow{ (e_1+e_2, te_2, te_3, f)}$  & $\bf \mathfrak{J}_{19}$
 & 
$\bf \mathfrak{J}_{3}$ & $\xrightarrow{ (e_1+e_2, te_2, te_3, f)}$ & $\bf \mathfrak{J}_{20}$

\\  \hline

$\bf \mathfrak{J}_{21}$ & $\xrightarrow{ (e_1, e_2, te_3, f)}$  & $\bf \mathfrak{J}_{27}$
  & 
$\bf \mathfrak{J}_{51}$ & $\xrightarrow{ (e_1,te_2,e_3, f)}$ & $\bf \mathfrak{J}_{28}$
\\  \hline

$\bf \mathfrak{J}_{26}$ & $\xrightarrow{ (e_1, e_2, te_3, f)}$ & $\bf \mathfrak{J}_{29}$
&
$\bf \mathfrak{J}_{1}$ & $\xrightarrow{ ((t-t^2)e_1+te_2, t^3e_2, e_3, f)}$  & $\bf \mathfrak{J}_{30}$
 
\\  \hline

$\bf \mathfrak{J}_{2}$ & $\xrightarrow{ (te_2+t^2e_3, t^2e_2+t^4e_3, e_1, f)}$ & $\bf \mathfrak{J}_{31}$

& 
$\bf \mathfrak{J}_{3}$ & $\xrightarrow{ (te_2+t^2e_3, t^2e_2+t^4e_3, e_1, f)}$ & $\bf \mathfrak{J}_{32}$

\\  \hline
 
$\bf \mathfrak{J}_{1}$ & $\xrightarrow{ ((t-t^2)e_1+te_2, t^3e_2,te_3, f)}$ & $\bf \mathfrak{J}_{33}$

&

$\bf \mathfrak{J}_{1}$ & $\xrightarrow{ (e_1+e_2+e_3, (t-t^2)e_2+te_3, t^3e_3, f)}$ & $\bf \mathfrak{J}_{34}$

\\  \hline

$\bf \mathfrak{J}_{2}$ & $\xrightarrow{ (e_1+e_2+e_3, (t-t^2)e_2+te_3, t^3e_3, f)}$ & $\bf \mathfrak{J}_{35}$
&
$\bf \mathfrak{J}_{3}$ & $\xrightarrow{ (e_1+e_2+e_3, (t-t^2)e_2+te_3, t^3e_3, f)}$ & $\bf \mathfrak{J}_{36}$
 
\\  \hline

$\bf \mathfrak{J}_{1}$ & $\xrightarrow{ (e_1+e_2+e_3, te_2,te_3, f)}$ & $\bf \mathfrak{J}_{37}$
& 
$\bf \mathfrak{J}_{2}$ & $\xrightarrow{ (e_1+e_2+e_3, te_2,te_3, f)}$ & $\bf \mathfrak{J}_{38}$

\\  \hline

$\bf \mathfrak{J}_{3}$ & $\xrightarrow{ (e_1+e_2+e_3, te_2,te_3, f)}$ & $\bf \mathfrak{J}_{39}$

& 
$\bf \mathfrak{J}_{34}$ & $\xrightarrow{ (te_1+e_2,te_2+e_3,te_3, f)}$ & $\bf \mathfrak{J}_{40}$

\\  \hline
 
$\bf \mathfrak{J}_{40}$ & $\xrightarrow{ (te_1+\frac{1+t}{2}e_2,te_3, e_2+e_3, f)}$ & $\bf \mathfrak{J}_{41}$
&
$\bf \mathfrak{J}_{21}$ & $\xrightarrow{ (e_1+e_3,e_2,te_3, f)}$ & $\bf \mathfrak{J}_{44}$

\\  \hline

$\bf \mathfrak{J}_{22}$ & $\xrightarrow{ (e_1+e_3,e_2,te_3, f)}$ & $\bf \mathfrak{J}_{45}$
 & 
$\bf \mathfrak{J}_{23}$ & $\xrightarrow{ (e_1+e_3,e_2,te_3, f)}$ & $\bf \mathfrak{J}_{46}$

\\  \hline

$\bf \mathfrak{J}_{42}$ & $\xrightarrow{ (e_1,te_3,t^2e_2, f)}$ & $\bf \mathfrak{J}_{50}$

 & 
$\bf \mathfrak{J}_{43}$ & $\xrightarrow{ (e_1,te_3,t^2e_2, f)}$ & $\bf \mathfrak{J}_{52}$

\\  \hline

$\bf \mathfrak{J}_{42}$ & $\xrightarrow{ (e_1,e_2,te_3, f)}$ & $\bf \mathfrak{J}_{56}$

 & 
$\bf \mathfrak{J}_{43}$ & $\xrightarrow{ (e_1,e_2,te_3, f)}$ & $\bf \mathfrak{J}_{59}$

\\  \hline

\end{longtable}

Below we list all the important reasons for necessary non-degenerations. All other nondegenerations which are not in this table, can be inferred from Theorem~\ref{3d} and Lemma~\ref{lema:inv}(2).

\begin{longtable}{lcl|l}
\hline
    \multicolumn{4}{c}{Non-degenerations reasons} \\
\hline

$\bf \mathfrak{J}_{1}$ & $\not \rightarrow  $ & 
$\bf \mathfrak{J}_{2},  \bf \mathfrak{J}_{3}, \bf \mathfrak{J}_{4}$ 
& 
$\mathcal R=\left\{\begin{array}{lllll}
\mbox{$A_1A_4 = 0$}
\end{array}\right\}
$\\
\hline

$\bf \mathfrak{J}_{42}$ & $\not \rightarrow  $ & 
$\bf \mathfrak{J}_{43}, \bf \mathfrak{J}_{51}, \bf \mathfrak{J}_{57}, \bf \mathfrak{J}_{58}$ 
& 
$\mathcal R=\left\{\begin{array}{lllll}
\mbox{$A_1A_4 = 0$}
\end{array}\right\}
$\\
\hline

$\bf \mathfrak{J}_{21}$ & $\not \rightarrow  $ & 
$\bf \mathfrak{J}_{22}, \bf \mathfrak{J}_{23},  \bf \mathfrak{J}_{25}, \bf \mathfrak{J}_{26}$ 
& 
$\mathcal R=\left\{\begin{array}{lllll}
\mbox{$A_1A_4 = 0$}
\end{array}\right\}
$\\
\hline

$\bf \mathfrak{J}_{24}$ & $\not \rightarrow  $ & 
$\bf \mathfrak{J}_{22}, \mathfrak{J}_{23}, \mathfrak{J}_{25}, \mathfrak{J}_{26}$ 
& 
$\mathcal R=\left\{\begin{array}{lllll}
\mbox{$A_2A_2 \subset A_3,\ A_2A_4=0, \ c_{11}^1=c_{14}^4$}
\end{array}\right\}
$\\
\hline

$\bf \mathfrak{J}_{47}$ & $\not \rightarrow  $ & 
$\bf \mathfrak{J}_{49}$ 
& 
$\mathcal R=\left\{\begin{array}{lllll}
\mbox{$A_1A_4 = 0$}
\end{array}\right\}
$\\
\hline

$\bf \mathfrak{J}_{53}$ & $\not \rightarrow  $ & 
$\bf \mathfrak{J}_{54}, \ \bf \mathfrak{J}_{55}$ 
& 
$\mathcal R=\left\{\begin{array}{lllll}
\mbox{$A_1A_4 = 0$}
\end{array}\right\}
$\\
\hline

$\bf \mathfrak{J}_{48}$ & $\not \rightarrow  $ & 
$\bf \mathfrak{J}_{49}$ 
& 
$\mathcal R=\left\{\begin{array}{lllll}
\mbox{$c_{12}^1=0, \ c_{13}^1=0, \ c_{11}^1=c_{14}^4$}
\end{array}\right\}
$\\
\hline

$\bf \mathfrak{J}_{54}$ & $\not \rightarrow  $ & 
$\bf \mathfrak{J}_{55}$ 
& 
$\mathcal R=\left\{\begin{array}{lllll}
\mbox{$c_{34}^4=0, \ c_{24}^4=0, \ c_{11}^1=c_{14}^4$}
\end{array}\right\}
$\\
\hline

$\bf \mathfrak{J}_{57}$ & $\not \rightarrow  $ & 
$\bf \mathfrak{J}_{58}$ 
& 
$\mathcal R=\left\{\begin{array}{lllll}
\mbox{$c_{34}^4=0, \ c_{24}^4=0, \ c_{11}^1=c_{14}^4$}
\end{array}\right\}
$\\
\hline

$\bf \mathfrak{J}_{22}$ & $\not \rightarrow  $ & 
$\bf \mathfrak{J}_{25}$ 
& 
$\mathcal R=\left\{\begin{array}{lllll}
\mbox{$c_{22}^1=0, \ c_{22}^2=0, \ c_{33}^1=0, \ c_{33}^2=0, \ c_{33}^3=c_{34}^4$}
\end{array}\right\}
$\\
\hline

$\bf \mathfrak{J}_{23}$ & $\not \rightarrow  $ & 
$\bf \mathfrak{J}_{25}$ 
& 
$\mathcal R=\left\{\begin{array}{lllll}
\mbox{$c_{22}^2=0, \ c_{23}^1=0, \ c_{23}^2=0, \ c_{33}^2=0, \ c_{24}^4=\frac{1}{2}c_{23}^3, \ c_{34}^4=\frac{1}{2}c_{33}^3$}
\end{array}\right\}
$\\
\hline

$\bf \mathfrak{J}_{26}$ & $\not \rightarrow  $ & 
$\bf \mathfrak{J}_{25}$ 
& 
$\mathcal R=\left\{\begin{array}{lllll}
\mbox{$c_{22}^2=0, \ c_{23}^1=0, \ c_{23}^2=0, \ c_{33}^2=0, \ c_{24}^4=\frac{1}{2}c_{23}^3, \ c_{34}^4=\frac{1}{2}c_{33}^3$}
\end{array}\right\}
$\\
\hline

\end{longtable}

Here $c_{ij}^{k}$ coefficients are structural constants in the $x_1=e_1, \ x_2=e_2, \ x_3=e_3, \ x_4=f$ basis.

\end{proof}


\begin{thebibliography}{99}
\normalsize
 
\bibitem{adashev1}
Adashev J.,   Kaygorodov I.,   Khudoyberdiyev A.,   Sattarov A.,  The algebraic and geometric classification of nilpotent left-symmetric algebras, Journal of Geometry and Physics, 167 (2021),   104287.


\bibitem{adashev2}
Adashev J.,   Kaygorodov I.,   Khudoyberdiyev A.,   Sattarov A.,  The algebraic and geometric classification of nilpotent right commutative algebras, Results in Mathematics, 76 (2021), 1,  24. 

\bibitem{degsulie}
Alvarez M.A., Hern\'andez I.,
On degenerations of Lie superalgebras, Linear Multilinear Algebra 68 (2020),   1, 29--44.


\bibitem{jord3}
Alvarez M.,  Hernández I.,   Kaygorodov I.,  
Degenerations of Jordan superalgebras, 
Bull. Malays. Math. Sci. Soc. 42 (2019),   6, 3289--3301.

\bibitem{jor2}
Ancochea Berm\'{u}dez J.M., Campoamor-Stursberg R., Garc\'{i}a Vergnolle L., S\'{a}nchez Hern\'{a}ndez J.,
Contractions d\'{}algebres de Jordan en dimension 2,
Journal of Algebra, 319 (2008), 6, 2395--2409.

\bibitem{contr11}
Ancochea Berm\'{u}dez J.M., Fres\'{a}n J., Margalef Bentabol J.,
Contractions of low-dimensional nilpotent Jordan algebras,
Communications in Algebra, 39 (2011), 3, 1139--1151.



\bibitem{deggraa}
Armour A., Zhang Yi.,
Geometric classification of $4$-dimensional superalgebras,
Chapter Algebra, Geometry and Mathematical Physics,
315 Springer Proceedings in Mathematics and Statistics, 85, (2014), 291--323.

 
 
 \bibitem{ben} Ben Hassine A.,  Chtioui T.,  Elhamdadi M.,  Mabrouk S.,  Cohomology and deformations of left-symmetric Rinehart algebras,
 Communications in Mathematics, 32 (2024),  2, 127--152.



\bibitem{BC99} Burde D., Steinhoff C., Classification of orbit closures of $4$-dimensional complex Lie algebras,
Journal of Algebra, 214 (1999), 2, 729--739.

\bibitem{CK07}
Cantarini N., Kac V., Classification of linearly compact simple Jordan and generalized Poisson superalgebras, Journal of Algebra, 313 (2007), 100--124

   
  

\bibitem{ckls}
Camacho L., Kaygorodov I., Lopatkin V., Salim M.,
    The variety of dual Mock-Lie algebras,
    Communications in Mathematics, 28 (2020), 2, 161--178.
 
    \bibitem{kkl20}
Ehret Q.,   Makhlouf A., 
On classification and deformations of Lie-Rinehart superalgebras, Communications in Mathematics, 30 (2022),  2, 67--100.
 

\bibitem{ikp20}
Eick B.,   Moede T., 
Computing subalgebras and $\mathbb{Z}_2$-gradings of simple Lie algebras over finite fields,
Communications in Mathematics, 30 (2022), 2, 37--50.

\bibitem{fkkv}
Fernández Ouaridi A.,  Kaygorodov I.,  Khrypchenko M.,  Volkov Yu., 
    Degenerations of nilpotent algebras,
    Journal of Pure and Applied Algebra,   226 (2022),  3, 106850.


\bibitem{degjor}
Gorshkov I., Kaygorodov I., Popov, Yu.: Degenerations of Jordan algebras and “Marginal’’ algebras. 
Algebra Colloquium, 28 (2021), 2, 281--294.

\bibitem{GRH}
Grunewald F.,  O'Halloran J.,
Varieties of nilpotent Lie algebras of dimension less than six, 
Journal of Algebra, 112 (1988), 315--325.

\bibitem{GRH2}
Grunewald F., O'Halloran J.,
A characterization of orbit closure and applications, 
Journal of Algebra, 116 (1988), 163--175.
 
\bibitem{lowcom}
 Hernández I.,  Rodrigues R.,  Oveimar Quintero Vanegas E., Low-dimensional commutative power-associative superalgebras, 
International Journal of Algebra and Computation, 31 (2021), 8, 1613--1632.



  \bibitem{jdk}
 Jumaniyozov D.,   Kaygorodov I.,   Khudoyberdiyev A.,  The algebraic and geometric classification of nilpotent noncommutative Jordan algebras, 
 Journal of  Algebra and its  Applications, 20 (2021),  11,  2150202.

  \bibitem{jdk2}
Jumaniyozov D.,   Kaygorodov I.,   Khudoyberdiyev A.,   The geometric classification of nilpotent commutative $\mathfrak{CD}$-algebras, 
Bollettino dell'Unione Matematica Italiana, 15 (2022),  3, 465--481.
 
  \bibitem{km}
Kadyrov S.,  Mashurov F.,  Unified computational approach to nilpotent algebra classification problems, 
Communications in Mathematics, 29 (2021),   2, 215–226.

  \bibitem{km14}
Kashuba I.,   Martin M. E. 
Deformations of Jordan algebras of dimension four, 
Journal of Algebra, 399 (2014), 277--289.

  \bibitem{k23}
  Kaygorodov I.,     
Non-associative algebraic structures: classification and structure,  Communications in Mathematics,  32 (2024), 3, 1--62.

\bibitem{kkl21}
Kaygorodov I., Khrypchenko M., Lopes S.,
The geometric classification of nilpotent algebras,
 Journal of Algebra,
633 (2023), 857–886.


 
\bibitem{kkp20}
Kaygorodov I.,  Khrypchenko M.,  Páez-Guillán P., The geometric classification of non-associative algebras: a survey,  Communications in Mathematics,  32 (2024),  2, 185--284.

   \bibitem{l24}
 Lopes S.,
Noncommutative Algebra and Representation Theory: Symmetry, Structure \& Invariants, Communications in Mathematics, 32
(2024), 3, 63--117.

\bibitem{Kac01}
Kac V., Classification of simple $\mathbb Z$-graded Lie superalgebras and simple Jordan superalgebras, 
Communications in Algebra, 13 (1977), 5, 1375--1400.

\bibitem{Kant}
Kantor I.L., Connection between Poisson brackets and Jordan and Lie superalgebras, in: Lie Theory, Differential Equations and Representation Theory,
Publications CRM, Montreal, 1990, pp. 213--225.


\bibitem{Shest}
 Lopez-Diaz M. C.,  Shestakov I.,   Sverchkov S., On speciality of Bernstein Jordan
algebras, Communications in Algebra, 28 (2000),  4375--4387.

\bibitem{Martin2}
 Martin M. E., Four-dimensional Jordan algebras, International Journal of Mathematics, Game Theory and Algebra, 20 (2013), 41--59.

\bibitem{Martin} Martin M.E., Shestakov I., Classification of three-dimensional Jordan superalgebras, arXiv:1708.01963v1.

\bibitem{MZ01}
Martinez C., Zelmanov E., Simple finite-dimensional Jordan superalgebras of prime characteristic,
J. Algebra, 236 (2001), 2, 575--629.

\bibitem{MZ02}
Martinez C., Zelmanov E., Simple Jordan superalgebras with semisimple even part, 
J. Algebra, 270 (2003), 2, 374--444.

\bibitem{MS}
 Mondal B.,  Saha R.,  
 Cohomology, deformations and extensions of Rota-Baxter Leibniz algebras, Communications in Mathematics, 30 (2022),  2, 93--117.

\bibitem{S90}
Seeley C., Degenerations of 6-dimensional nilpotent Lie algebras over $\mathbb{C}$,  Communications in Algebra,  18 (1990), 3493--3505.

\end{thebibliography}
\end{document}